\newtheorem{thm}{Theorem}[section]
\newtheorem{cor}[thm]{Corollary}
\newtheorem{prop}[thm]{Proposition}
\newtheorem{conj}[thm]{Conjecture}
\newtheorem{notation}[thm]{Notation}
\theoremstyle{definition}
\newtheorem{defn}[thm]{Definition}
\theoremstyle{remark}
\numberwithin{equation}{section}
\def\N{\mathbb{N}}
\newcommand{\F}{\mathbb{F}}
\newcommand{\K}{\mathbb{K}}
\newcommand{\p}{\mathbb{P}}
\newcommand{\Z}{\mathbb{Z}}
\def\Fq{\mathbb{F}_q}
\begin{document}

\title{Graph decompositions in projective geometries}

\author{Marco Buratti \thanks{Dipartimento di Matematica e Informatica, Universit\`a di Perugia, via Vanvitelli 1 - 06123 Italy, email: buratti@dmi.unipg.it}
\quad\quad
Anamari Naki\'c \thanks{Faculty of Electrical Engineering and Computing,
University at Zagreb, Croatia, email: anamari.nakic@fer.hr}
\quad\quad
Alfred Wassermann \thanks{Department of Mathematics, University of Bayreuth, D-95440 Bayreuth,
Germany, email: alfred.wassermann@uni-bayreuth.de}}


\maketitle
\begin{abstract}
Let PG$(\F_q^v)$ be the $(v-1)$-dimensional projective space over $\F_q$ and let $\Gamma$ be a simple graph of order ${q^k-1\over q-1}$ for some $k$.
A 2$-(v,\Gamma,\lambda)$ design over $\F_q$ is a collection $\cal B$ of graphs ({\it blocks}) isomorphic to $\Gamma$ with the following properties:
the vertex set of every block is a subspace of PG$(\F_q^v)$; every two distinct points of PG$(\F_q^v)$ are adjacent in exactly $\lambda$ blocks. This new definition covers, in particular, the well known concept of a 2$-(v,k,\lambda)$ design over $\F_q$ corresponding to the case that $\Gamma$ is complete. 

In this work of a foundational nature we illustrate how difference methods allow us to
get concrete non-trivial examples of $\Gamma$-decompositions over $\F_2$ or $\F_3$ for which $\Gamma$ is a cycle, a path, a prism, a generalized Petersen graph, or a Moebius ladder. 
In particular, we will discuss in detail the special and very hard case that $\Gamma$ is complete and $\lambda=1$, i.e.,
the Steiner 2-designs over a finite field. Also, we briefly touch the new topic of near resolvable 2-$(v,2,1)$ designs over $\F_q$.

This study has led us to some (probably new) collateral problems concerning difference sets. 
Supported by multiple examples, we conjecture the existence of infinite families of $\Gamma$-decompositions over a finite field
that can be obtained by suitably labeling the vertices of $\Gamma$ with the elements of a Singer difference set.
\end{abstract}

\small\noindent {\bf Keywords:} design over a finite field; group divisible design over a finite field;
projective space; spread;
graph decomposition; difference set; difference family.

\medskip\noindent {\bf Mathematics Subject Classification (2010):} 05B05, 05B10, 05B25, 05B30. 
\eject
\normalsize
\section{Introduction}
This work has been inspired by the natural relationship between classic 2-designs and 2-designs over a finite field, and between classic 2-designs and graph decompositions. 
Hence, it involves three of the main characters of combinatorics that are designs, graphs, and finite geometries.

Designs over finite fields have been introduced in the 1970's \cite{CI74, CII74, D76}. Even though they are
a generalization of the classic $t$-designs that date back to the 1930's, they are not $t$-designs in the classic sense
when $t>2$.
Instead, a $2$-design over a finite field $\F_q$ is a $2$-design in the classic sense
with the tremendous constraint that its points are those of a projective space $\p$ over $\F_q$ and that its blocks
are suitable subspaces of $\p$. 
This is the reason for which in this paper we are interested only in the special case $t=2$. 

The theory of graph decompositions originates from design theory, especially from the  quite evident fact 
that a $2$$-(v, k,1)$ design is completely equivalent to  a decomposition of the complete graph of order $v$ into cliques of order $k$. 
Thus, for what said above, a $2$-design over $\F_q$ can be seen as a decomposition
of the complete graph whose vertices are the points of a projective space $\p$ over $\F_q$ into cliques each of which has 
a subspace of $\p$ as vertex set. 

Based on the above observations, we propose to consider, much more generally, decompositions of a graph $\K$ with vertex set the
points of a projective space $\p$ into copies of a graph $\Gamma$ each of which has a subspace of $\p$ as vertex set.
If $\p$ is over $\F_q$ and its dimension is $v-1$, we will refer to this decomposition as a $2$$-(v,\Gamma,1)$ design over $\F_q$.

Designs over finite fields have recently received a huge amount of attention because of the discovering in \cite{BEO16} of a non-trivial Steiner 2-design over a finite field
which was conjectured could not exist for a long time. Unfortunately, for the time being, it seems that there is no hope to find others.
To relax the definition of a Steiner 2-design over $\F_q$ to that of a $2$$-(v,\Gamma,1)$ design over $\F_q$ seems to promise much more 
results even though the related problems still appear very difficult, hence challenging. 
Thus we hope that this new topic of ``graph decompositions over a finite field" may open the doors to a fruitful research in the future.

The article will be structured as follows.
The next section is useful for understanding the notation and terminology used throughout the paper.
In Section 3 we recall the definitions of a classic 2-design, of a group divisible design, and their versions over a finite field.
All these definitions will be relaxed in Section 4 in terms of graph decompositions.
In Section 5, generalizing what  has already been done by the first two authors in \cite{BN},
the difference methods of the classic design theory will be adapted to graph decompositions over a finite field; these
methods will be crucial for all the concrete constructions that we are able to exhibit in the paper.
In Section 6 we discuss the possible existence of a Steiner 2-design over a finite field. In particular, we prove a
necessary condition which seems to have gone unnoticed before and we revisit the non-trivial Steiner 2-design over a finite field mentioned above. 
In Sections 7 and 8 we give necessary conditions for the existence of cycle- and path-decompositions over
a finite field providing some concrete examples over $\F_2$ and $\F_3$. In Section 9 we propose some problems on difference sets. 
In particular, given a $(2s+1,k,\lambda)$ difference set $D$ in a group $G$, and given a graph $\Gamma$ of order $k$ and size $s$, 
we ask whether it is possibile to label the vertices of $\Gamma$ with the elements of $D$ is such a way that every non-identity element 
of $G$ may be expressed as a difference of two 
``adjacent labels". We conjecture that the answer is always affirmative if $\Gamma$ is regular and connected.
The proof of this conjecture when $D$ is a Singer difference set would give an infinite family of non-trivial decompositions 
over a finite field, i.e., a $2$$-(v,\Gamma,1)$ design over $\F_q$ for every regular and connected graph $\Gamma$ of order ${q^{v-1}-1\over q-1}$.
In Section 10 such a design has been found in each of the following cases: $v\in\{4,5,6,7\}$, $q=2$, and $\Gamma$ is the $(2^{v-1}-1)$-cycle;
$v=5$, $q=3$, and $\Gamma$ is a prism or a generalized Petersen graph or a Moebius ladder on 40 vertices.
Finally, in the last section, we make a short discussion about {\it improper} graph decompositions over a finite field, i.e.,
$2$$-(v,\Gamma,1)$ designs over $\F_q$ where $\Gamma$ has at least one isolated vertex.

\section{Notation and terminology}
For $q$ a prime power, $\F_q$ will denote the finite field of order $q$ and PG$(\Fq^v)$ is the $(v-1)$-dimensional projective space over $\F_q$.
The number of points of PG$(\Fq^v)$ will be denoted by $[v]_q$. Hence we have: $$[v]_q = \frac{q^v-1}{q-1}=\sum_{i=0}^{v-1}q^i$$
By $[\Z_v]_q$ we will denote the {\it Singer group} of order $[v]_q$, that is the quotient group between the
multiplicative groups of the fields of order $q^v$ and $q$: $$[\Z_v]_q=\F_{q^v}^*/\F_q^*.$$
This group acts sharply transitively on the point-set of PG$(\F_q^v)$. Hence,
the points of PG$(\F_q^v)$ will be always identified with the elements of $[\Z_v]_q$.

By analogy, given that $m\Z_{mn}=\{mi \ | \ 0\leq i\leq n-1\}$ is the subgroup of $\Z_{mn}$ of order $n$,
the subgroup of $[\Z_{mn}]_q$ of order $[n]_q$ will be denoted by $[m\Z_{mn}]_q$.
Thus, if $g$ is a generator of $\F_{q^{mn}}^*$, we have:
$$[m\Z_{mn}]_q=\{g^{i[m]_{q^n}} \ | \ 0\leq i\leq [n]_q-1\}.$$

Throughout the paper, $\K_v$ is the complete graph on an abstract set of $v$ vertices, 
$\K_V$ is the complete graph on a concrete set $V$, and $\K_{m\times n}$ is the complete $m$-partite graph 
whose parts have size $n$. The cycle and the path on $k$-vertices will be denoted by $C_k$ and $P_k$, respectively.  If $\lambda$ is a positive integer and $\Gamma$ is a graph, then $\lambda\Gamma$
will denote the $\lambda$-fold of $\Gamma$.

 If $\Gamma$ is an abstract graph, when we refer to a $\Gamma$-subgraph of $\K_V$ we will mean a subgraph of $\K_V$ isomorphic to $\Gamma$.
If $q$ is a prime power, an abstract graph $\Gamma$ will be called {\it $q$-spaceable} if its order is the number of points
of a projective space over $\F_q$ of a suitable dimension, i.e., if its vertex set has size $[k]_q$ for some $k$.
Finally, speaking of a $\Gamma$-subspace of PG$(\F_q^v)$ we will mean a graph isomorphic to $\Gamma$
whose vertex set is a subspace of PG$(\F_q^v)$. Of course, in this case $\Gamma$ must be
$q$-spaceable.

Let $G$ be a group and let $B$ be a simple graph with vertices in $G$. By {\it list of differences} of $B$ we will mean 
the multiset $\Delta B$ of all possible differences $x-y$ or quotients $xy^{-1}$
(depending on whether $G$ is additive or multiplicative) with $(x,y)$ an ordered pair of adjacent vertices of $B$. Note that if $B$ is complete with vertex set $V(B)$, then
$\Delta B$ coincides with the list of differences of the set $V(B)$ in the usual sense. 

The list of differences of $B$ can be conveniently displayed by means of its {\it difference table}. This is the square matrix $T(B)$
whose rows and columns are labeled with the vertices $b_1$, \dots, $b_k$ of $B$ and where the entry $t_{ij}$ is empty
or equal to the difference $b_i-b_j$ (or the quotient $b_ib_j^{-1}$ if $G$ is multiplicative) according to whether $b_i$ is not adjacent or adjacent to $b_j$, respectively. 

If $\cal F$ is a family of subgraphs of $\K_G$, then the
list of differences of ${\cal F}$ is the multiset sum $\displaystyle \Delta{\cal F}=\biguplus_{B\in {\cal F}}\Delta(B)$.

The {\it development} of $B$ and $\cal F$, denoted by dev$B$ and dev${\cal F}$, are the multisets of graphs defined by
$${\rm dev}(B)=\{B_g \ | \ g\in G\}\quad\quad{\rm and}\quad\quad {\rm dev}{\cal F}=\biguplus_{B\in {\cal F}}{\rm dev}(B)$$
where $B_g$ is the graph obtained from $B$ by replacing each $b\in V(B)$ with $b+g$ or $bg$ according
to whether $G$ is additive or multiplicative, respectively.

\section{Designs over a finite field}

Let us recall the well known notion of a $t$-design.
\begin{defn}
A $t$$-(v, k,\lambda)$ design is a pair $({\cal P},{\cal B})$ where $\cal P$ is a set of $v$ {\it points}, 
and $\cal B$ is a collection of $k$-subsets ({\it blocks}) of $\cal P$ such that every $t$-subset of $\cal P$ is contained in exactly $\lambda$ blocks.
\end{defn}

The above definition has been generalized as follows.
\begin{defn}
A $t$$-(v,k,\lambda)$ {\it design over $\F_q$} -- or a $t$$-(v,k,\lambda)_q$ design to be brief -- is a collection $\cal S$ of $k$-dimensional subspaces of the vector space 
$\F_q^v$ with the property that any $t$-dimensional subspace of $\F_q^v$ is contained in exactly $\lambda$ members of $\cal S$.
\end{defn}

A $t$$-(v,k,\lambda)$ design over $\F_q$ is also said the {\it $q$-analog} of a $t$$-(v,k,\lambda)$ design or
a $t$$-(v, k, \lambda)_q$ {\it subspace design}.

In this paper we are interested only in the special case that $t=2$.
Every $q$-analog of a $2$$-$design can be seen as a 2$-$design in the classic sense. Indeed, if PG$(\Fq^v)$ is the $(v-1)$-dimensional
projective space over $\F_q$,  the definition of a $2$$-(v,k,\lambda)_q$ design can be equivalently reformulated as follows.
\begin{defn}\label{design_q}
A $2$$-(v, k,\lambda)$ design over $\F_q$ is a classic $2$$-([v]_q,[k]_q,\lambda)$ design $({\cal P},{\cal B})$ where $\cal P$ is the set
of points of PG$(\F_q^v)$ and where every $B\in\cal B$ is a $(k-1)$-dimensional subspace of PG$(\F_q^v)$.
\end{defn}
The set of points and the set of all possible $(k-1)$-dimensional subspaces of PG$(\Fq^v)$ is the {\it complete} 2$-(v,k,\lambda)_q$ design where
$\displaystyle\lambda=\prod_{i=1}^{k-2}{q^{v-k+i}-1\over q^{k-i}-1}$.
In particular, for $k=2$, the set of points and the set of lines of PG$(\Fq^v)$ is a 2$-(v,2,1)_q$ design. 
For an overview of known results about $2$$-(v, k , \lambda)_q$ designs see \cite{BKW18}. 

Now we recall the definitions of a classic group divisible design and of a group divisible design over a finite field.

\begin{defn}
A $(mn,n,k,\lambda)$ {\it group divisible design} (briefly GDD) is a triple $({\cal P},{\cal G},{\cal B})$
where $\cal P$ is a set of $mn$ points, ${\cal G}$ is a partition of ${\cal P}$ into $m$ sets
({\it classes}) of size $n$, and ${\cal B}$ is a collection of $k$-subsets of ${\cal P}$ ({\it blocks}) such that each block meets each class
in at most one point and any two points belonging to different classes are contained in exactly $\lambda$ blocks.
\end{defn}

We also recall that GDDs are often useful to construct 2$-$designs in many ways. In particular, it is evident
that the existence of a $(mn,n,k,\lambda)$-GDD and of a 2$-(n,k,\lambda)$ design implies the existence 
of a 2$-(mn,k,\lambda)$ design.

The $q$-analog of a classic GDD has been recently introduced in \cite{BKKNW}.
Its definition requires the notion of a {\it $d$-spread} of PG$(\F_q^v)$, that
is a partition of the set of points of PG$(\F_q^v)$ into $d$-dimensional subspaces.
Such a $d$-spread exists if and only if $d+1$ is a divisor of $v$. In particular, the {\it Desarguesian\break $(n-1)$-spread} of PG$(\F_q^{mn})$
is the partition of $[\Z_{mn}]_q$ into the cosets of $[m\Z_{mn}]_q$ (see, e.g., \cite{LV}).

\begin{defn}\label{gdd_q}
A $(mn,n,k,\lambda)$-GDD over $\F_q$ -- or a $(mn,n,k,\lambda)_q$-GDD to be brief -- is a
$([mn]_q,[n]_q,[k]_q,\lambda)$-GDD where the points are
those of PG$(\F_q^{mn})$, the classes are the members of a $(n-1)$-spread of PG$(\F_q^{mn})$,
and the blocks are $(k-1)$-dimensional subspaces of PG$(\F_q^{mn})$.
\end{defn}

As a special case of the remark that we have done on classic GDDs we can say that
combining a $(mn,n,k,\lambda)_q$-GDD with a 2$-(n,k,\lambda)_q$ design one obtains a 2$-(mn,k,\lambda)_q$ design.

\section{Graph decompositions over a finite field}
Now we want to make a link between designs over finite fields and {\it graph decompositions}.

\begin{defn}
Let $\Gamma$ be a simple graph. A 2$-(v,\Gamma,\lambda)$ design is a pair $({\cal P},{\cal B})$ where ${\cal P}$ is a set of $v$ points and
where ${\cal B}$ is a collection of $\Gamma$-subgraphs ({\it blocks}) of $\K_{\cal P}$ such that any two distinct points are 
adjacent in exactly $\lambda$ blocks.
\end{defn}

In most of the literature (see, e.g., \cite{BE06}) a design as above is said to be a {\it $(\lambda\K_v,\Gamma)$-design} or a 
{\it $\Gamma$-decomposition of $\lambda\K_v$}. Indeed, to say that $({\cal P},{\cal B})$ is a 2$-(v,\Gamma,\lambda)$ design 
is equivalent to say that the edge sets of its blocks partition the edge multiset of $\lambda\K_{\cal P}$.
We changed a bit the formal definition just in order to keep notation and terminology of graph decompositions similar to those of classic 2$-$designs.

It is evident that a 2$-(v,\K_k,\lambda)$ design is nothing but a classic 2$-(v,k,\lambda)$ design.
Thus, by Definition \ref{design_q}, any $2$-$(v,k,\lambda)_q$ design can be equivalently interpreted as
a decomposition of the $\lambda$-fold of the complete graph on the points of PG$(\Fq^v)$ into a collection of $\K_{[k]_q}$-subspaces of PG$(\Fq^v)$. 
This leads to the following new notion of a {\it graph decomposition over a finite field}.

\begin{defn}\label{graphdesign_q}
A 2$-(v,\Gamma,\lambda)$ design over $\F_q$ is a 2$-([v]_q,\Gamma,\lambda)$ design $({\cal P},{\cal B})$ such that $\cal P$ is the set of points of PG$(\Fq^v)$, 
and each $B\in{\cal B}$ is a $\Gamma$-subspace of PG$(\F_q^v)$.
\end{defn}

Note that a ``2$-(v,\K_{[k]_q},\lambda)$ design over $\F_q$"
is essentially a ``2$-(v,k,\lambda)_q$ design". Consitently, when $\Gamma$ is a $q$-spaceable cycle or a $q$-spaceable path, 
we will adopt the following notation.

\begin{notation}\label{q-troubles}
{\rm Speaking of a ``$2$$-(v,C_k,\lambda)_q$ design" we will mean a\break
``$2$$-(v,C_{[k]_q},\lambda)$ design over $\F_q$".
Analogously, speaking of a ``$2$$-(v,P_k,\lambda)_q$ design" we will mean a 
``$2$$-(v,P_{[k]_q},\lambda)$ design over $\F_q$".}
\end{notation}

Let us say that a 2$-(v,\Gamma,\lambda)$ design $({\cal P},{\cal B})$ is {\it spanning} if $\Gamma$ has order $v$, hence if all its 
blocks are spanning subgraphs of $\K_{\cal P}$.
It is obvious that every spanning 2$-([v]_q,\Gamma,\lambda)$ design can be seen as a 2$-(v,\Gamma,\lambda)$ design over $\F_q$;
it is enough to rename the vertices of $\K_{[v]_q}$ with the points of PG$(\F_q^v)$. Thus, in the framework of designs over finite fields, 
the spanning designs will be considered trivial. In spite of this fact they could be helpful for the construction of some graph decompositions over a finite field
which are not trivial at all (see next Corollary \ref{notsotrivial}).


Let us see which are the obvious necessary conditions for the existence of a graph decomposition over a finite field.

\begin{prop}\label{admissible}
	The trivial necessary conditions for the existence of a $2$$-(v,\Gamma,\lambda)$ design over $\F_q$ are the following:
	\begin{itemize}
	\item[(i)] $\Gamma$ is $q$-spaceable;
	\item[(ii)] the size of $\Gamma$ is a divisor of ${1\over2}\lambda q[v]_q[v-1]_q$;
	\item[(iii)] the greatest common divisor of the degrees of the vertices of $\Gamma$ divides $\lambda q[v-1]_q$.
	\end{itemize}
\end{prop}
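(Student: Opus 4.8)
The plan is to derive each of the three conditions from the defining properties of a $2$-$(v,\Gamma,\lambda)$ design over $\F_q$, essentially by counting.

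\medskip
\noindent\textbf{Condition (i).} This is immediate: by Definition \ref{graphdesign_q} every block is a $\Gamma$-subspace of $\PG(\F_q^v)$, and for such a subspace to exist at all the graph $\Gamma$ must be $q$-spaceable by the definition of that term in Section 2 (its vertex set must be a subspace, hence have size $[k]_q$ for some $k$). So there is nothing to prove beyond unwinding the definitions.

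\medskip
\noindent\textbf{Condition (ii).} I would count edges two ways. The edge multiset of $\lambda\K_{\cal P}$, where ${\cal P}$ has $[v]_q$ points, has size $\lambda\binom{[v]_q}{2}=\tfrac12\lambda[v]_q([v]_q-1)$. Now observe the arithmetic identity $[v]_q-1=q[v-1]_q$, since $[v]_q=1+q+\cdots+q^{v-1}=1+q[v-1]_q$; hence this size equals $\tfrac12\lambda q[v]_q[v-1]_q$. On the other hand, since the blocks partition this edge multiset (the design being equivalent to a $\Gamma$-decomposition of $\lambda\K_{\cal P}$, as noted after the definition of a $2$-$(v,\Gamma,\lambda)$ design) and each block has exactly $|E(\Gamma)|$ edges, the number of blocks is $\tfrac12\lambda q[v]_q[v-1]_q/|E(\Gamma)|$, which must be an integer. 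This gives (ii).

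\medskip
\noindent\textbf{Condition (iii).} I would count, for a fixed point $P$, the edges incident with $P$, again two ways. In $\lambda\K_{\cal P}$ the point $P$ lies on $\lambda([v]_q-1)=\lambda q[v-1]_q$ edges. These edges are distributed among the blocks containing $P$: if $P$ is a vertex of a block $B$, then $P$ contributes $\deg_B(P)$ edges at $P$, and $\deg_B(P)$ is the degree in $\Gamma$ of the vertex to which $P$ corresponds under the isomorphism $B\cong\Gamma$. Summing over all blocks through $P$, we get that $\lambda q[v-1]_q$ is a sum of vertex-degrees of $\Gamma$; in particular it is a multiple of $d:=\gcd\{\deg_\Gamma(x)\mid x\in V(\Gamma)\}$, since $d$ divides every term of that sum. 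Hence $d \mid \lambda q[v-1]_q$, which is (iii).

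\medskip
None of these steps presents a real obstacle; the only point requiring a moment's care is the arithmetic simplification $[v]_q-1=q[v-1]_q$ used in both (ii) and (iii), and the observation in (iii) that the relevant local count is genuinely a $\Z$-linear combination of the vertex degrees of $\Gamma$ (with nonnegative coefficients counting how many blocks realize $P$ at a vertex of each degree), so that the gcd of the degrees divides it. I would present (i) in one line, and fold (ii) and (iii) into short double-counting paragraphs as above.
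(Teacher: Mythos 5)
Your proposal is correct and follows essentially the same argument as the paper: (i) by unwinding the definition, (ii) by the edge count $\lambda\binom{[v]_q}{2}=\tfrac12\lambda q[v]_q[v-1]_q$, and (iii) by observing that $\sum_B \deg_B(P)=\lambda([v]_q-1)=\lambda q[v-1]_q$ is a sum of vertex degrees of $\Gamma$ and hence divisible by their greatest common divisor. Nothing further is needed.
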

\begin{proof} Let ${\cal D}=({\cal P},{\cal B})$ be a 2$-(v,\Gamma,\lambda)$ design over $\F_q$.
By definition, the vertex set of every $B\in{\cal B}$ is a subspace of PG$(\F_q^v)$,
hence $\Gamma$ is $q$-spaceable.
The other conditions follow from the necessary conditions for the existence of a classic graph decomposition.
The size of $\Gamma$ must be a divisor of the size of the $\lambda$-fold of $\K_{\cal P}$, which is equal to $\lambda{[v]_q \choose 2}$,
and a trivial computation shows that ${[v]_q \choose 2}={1\over2}q[v]_q[v-1]_q$.
Finally, for $P\in{\cal P}$ and $B\in{\cal B}$, let $deg_B(P)$ be the degree of $P$ in the graph $B$. Then it is obvious that
the sum $\sum_{B\in{\cal B}} deg_B(P)$ is the degree of $P$ in $\lambda\K_{\cal P}$, that is $\lambda([v]_q-1)=\lambda q[v-1]_q$. Considering that each
block is isomorphic to $\Gamma$, it is clear that $deg_B(P)$ is a degree of a vertex of $\Gamma$ for each $B$, hence  
$\lambda q[v-1]_q$ is divisible by the greatest common divisor of the degrees of the vertices of $\Gamma$.
\end{proof}

Note that the third admissibility condition is empty  in the case that the greatest common divisor of the degrees of the vertices of $\Gamma$
is 1. In contrast, it is particularly important when $\Gamma$ is regular. Indeed in this case condition (iii) can be more conveniently reformulated as follows.
\begin{quote}
(iii') If $\Gamma$ is a regular graph of degree $d$, then $d$ must be a divisor of $\lambda q[v-1]_q$.
\end{quote}
For instance, a non-trivial 2$-(7,\Gamma,1)$ design over $\F_2$ with $\Gamma=(V,E)$ connected, may exist only 
when the order and the size of $\Gamma$ are as follows.
\begin{center}
\begin{tabular}{|l|c|r|c|r|c|r|c|r|c|r|c|r|}
\hline {$|V|$} & $|E|$    \\
\hline $7$ & $\bf7$, \ $9$, \ $\bf21$  \\
\hline $15$ & $21$, \ $63$  \\
\hline $31$ & $63$, \ $127$, \ $381$  \\
\hline $63$ & $\bf63$, \ $127$, \ $381$, \ $889$, \ $1143$ \\
\hline
\end{tabular} 
\end{center}
The size is in boldface only when $\Gamma$ might be regular, hence in the cases that $|V|$ divides $2|E|$.
Note that $(|V|,|E|)=(7,21)$ corresponds to the case that $\Gamma$ is complete,
that is equivalent to the 2-analog of a {\it Fano plane}, namely to a 2$-(7,3,1)_2$ design. There is a great deal of doubt on
the existence of such a design; indeed it has been proved that if it exists, then its full automorphism group has order at 
most two \cite{BKN16, KKW16,ST87}. The case $(|V|,|E|)=(7,7)$ corresponds to a 2$-(7,C_3,1)_2$ design that will be constructed in
Section \ref{cyclesection}.
The case $(|V|,|E|)=(63,63)$ corresponds to a 2$-(7,C_6,1)_2$ design that will be constructed in 
Subsection \ref{singergracefulcycles}.

Now note that a $(mn,n,k,\lambda)$-GDD is equivalent to a $\K_k$-decomposition of $\lambda\K_{m\times n}$. 
Thus, in particular, a $(mn,n,k,\lambda)_q$-GDD can be seen as a decomposition of the
$[m]_{q^n}$-partite graph whose parts are the members of a $(n-1)$-spread of PG$(\Fq^{mn})$
into $\K_k$-subspaces of PG$(\F_q^{mn})$.
These observations naturally lead to the following definitions.

\begin{defn}
Let $\Gamma$ be a simple graph. A $(mn,n,\Gamma,\lambda)$-GDD is a triple $({\cal P},{\cal G},{\cal B})$
where $\cal P$ is a set of $mn$ points, ${\cal G}$ is a partition of ${\cal P}$ into $m$ classes of size $n$,
and ${\cal B}$ is a collection of $\Gamma$-subgraphs ({\it blocks}) of $\K_{\cal P}$ such that the two vertices of any edge of any block belong to 
distinct classes, and two points belonging to different classes are adjacent in exactly $\lambda$ blocks.
\end{defn}

\begin{defn} Let $\Gamma$ be a $q$-spaceable graph.
A $(mn,n,\Gamma,\lambda)$-GDD over $\F_q$ is a $([mn]_q,[n]_q,\Gamma,\lambda)$-GDD
whose points are those of PG$(\F_q^{mn})$, whose classes are the members of a $(n-1)$-spread, and 
whose blocks are $\Gamma$-subspaces of PG$(\F_q^{mn})$.
\end{defn}

Here is a very elementary but useful composition construction.
\begin{prop}
If there exists both a $(mn,n,\Gamma,\lambda)$-GDD over $\F_q$ and 
a $2$$-(n,\Gamma,\lambda)$ design over $\F_q$, then there exists a $2$$-(mn,\Gamma,\lambda)$ design over $\F_q$.
\end{prop}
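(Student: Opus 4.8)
The plan is to imitate the classical composition of a GDD with a design: I would superpose the blocks of the given GDD over $\F_q$ with suitably placed copies of the given small $\Gamma$-design over $\F_q$, one on each spread class. Concretely, let $({\cal P},{\cal G},{\cal B})$ be a $(mn,n,\Gamma,\lambda)$-GDD over $\F_q$, so that ${\cal P}$ is the set of points of PG$(\F_q^{mn})$ and ${\cal G}$ is an $(n-1)$-spread of PG$(\F_q^{mn})$. Each class $S\in{\cal G}$ is an $(n-1)$-dimensional subspace of PG$(\F_q^{mn})$, hence --- as a projective geometry --- isomorphic to PG$(\F_q^n)$, with the isomorphism carrying subspaces to subspaces and $\Gamma$-subspaces to $\Gamma$-subspaces.

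Using this, I would transport the given $2$-$(n,\Gamma,\lambda)$ design over $\F_q$ onto each $S$, obtaining a collection ${\cal B}_S$ of $\Gamma$-subspaces of $S$ --- and therefore $\Gamma$-subspaces of PG$(\F_q^{mn})$ --- such that any two distinct points of $S$ are adjacent in exactly $\lambda$ members of ${\cal B}_S$. The candidate design is then $({\cal P},\,{\cal B}\uplus\biguplus_{S\in{\cal G}}{\cal B}_S)$. Its blocks are all $\Gamma$-subspaces of PG$(\F_q^{mn})$, so it remains only to verify the replication condition for an arbitrary pair of distinct points $P,Q$ of PG$(\F_q^{mn})$.

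Here the argument splits into the two expected cases. If $P$ and $Q$ lie in different classes of ${\cal G}$, then no block in any ${\cal B}_S$ joins them (each such block lies inside a single class), while by the GDD property exactly $\lambda$ blocks of ${\cal B}$ have $P$ adjacent to $Q$. If instead $P$ and $Q$ lie in the same class $S_0$, then no block of ${\cal B}$ joins them --- the two ends of every edge of a GDD block lie in distinct classes --- and no block of ${\cal B}_S$ with $S\ne S_0$ joins them either, while exactly $\lambda$ blocks of ${\cal B}_{S_0}$ do. In both cases the count is $\lambda$, so the superposition is a $2$-$(mn,\Gamma,\lambda)$ design over $\F_q$.

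I do not expect any genuine obstacle in this proof; the only point requiring a word of care is the transport step, i.e.\ the observation that an $(n-1)$-dimensional subspace of PG$(\F_q^{mn})$ genuinely behaves as PG$(\F_q^n)$ --- so that a $2$-$(n,\Gamma,\lambda)$ design over $\F_q$ may be copied onto it --- together with the equally standard fact that a subspace of a subspace is a subspace of the ambient projective space, which is exactly what guarantees the copied blocks qualify as $\Gamma$-subspaces of PG$(\F_q^{mn})$.
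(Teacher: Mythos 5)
Your proposal is correct and follows exactly the same route as the paper: transport the given $2$-$(n,\Gamma,\lambda)$ design over $\F_q$ onto each spread class (each of which is a PG$(\F_q^n)$) and superpose these copies with the GDD blocks. The paper simply declares the resulting pair count ``clear,'' whereas you spell out the two-case verification; there is no substantive difference.
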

\begin{proof}
Let $({\cal P},{\cal S},{\cal B})$ be a $(mn,n,\Gamma,\lambda)$-GDD over $\F_q$. Each $S\in{\cal S}$ is a PG$(\F_q^n)$
and then, by assumption, we can construct a 2$-(n,\Gamma,\lambda)$ design over $\F_q$, say $(S,{\cal B}_S)$.
It is then clear that $({\cal P},{\cal B} \ \cup \ \bigcup_{S\in{\cal S}}{\cal B}_S)$ is the required $2$$-(mn,\Gamma,\lambda)$ design over $\F_q$.
\end{proof}

We already commented that every spanning 2$-([n]_q,\Gamma,\lambda)$ design can be seen as a 2$-(n,\Gamma,\lambda)$ design over $\F_q$. 
These designs, apparently uninteresting, could be 
crucial for the construction of some  2$-(mn,\Gamma,\lambda)$ designs over $\F_q$.
Indeed, as an immediate consequence of the previous proposition we can state the following.

\begin{cor}\label{notsotrivial}
If there exist a $(mn,n,\Gamma,\lambda)$-GDD over $\F_q$ and 
a spanning $2-([n]_q,\Gamma,\lambda)$ design, then there exists a $2$$-(mn,\Gamma,\lambda)$ design over $\F_q$.
\end{cor}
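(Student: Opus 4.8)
The plan is to deduce Corollary \ref{notsotrivial} directly from the preceding proposition by observing that a spanning $2$-$([n]_q,\Gamma,\lambda)$ design provides exactly the ingredient needed in that proof. Recall from Definition \ref{graphdesign_q} that a $2$-$(n,\Gamma,\lambda)$ design over $\F_q$ is a $2$-$([n]_q,\Gamma,\lambda)$ design whose point set is that of $\PG(\F_q^n)$ and whose blocks are $\Gamma$-subspaces. When $\Gamma$ has order exactly $[n]_q$, a $\Gamma$-subspace of $\PG(\F_q^n)$ must have the whole point set as its vertex set, i.e.\ it is a spanning $\Gamma$-subgraph of $\K_{\PP}$; conversely any spanning $\Gamma$-subgraph has the entire ($n-1$)-dimensional space as its vertex set, which is trivially a subspace of $\PG(\F_q^n)$. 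Hence, for a graph $\Gamma$ of order $[n]_q$, the notions ``spanning $2$-$([n]_q,\Gamma,\lambda)$ design'' and ``$2$-$(n,\Gamma,\lambda)$ design over $\F_q$'' coincide after identifying the $[n]_q$ abstract vertices with the points of $\PG(\F_q^n)$.

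Given this, the proof is a one-line reduction. First I would note that the hypothesis of the corollary includes the existence of a $(mn,n,\Gamma,\lambda)$-GDD over $\F_q$, which is the first hypothesis of the proposition. Second, I would invoke the identification above: a spanning $2$-$([n]_q,\Gamma,\lambda)$ design is, after relabelling vertices by the points of $\PG(\F_q^n)$, a $2$-$(n,\Gamma,\lambda)$ design over $\F_q$ — exactly the second hypothesis of the proposition. (This relabelling is legitimate precisely because the Singer group $[\Z_n]_q$ acts sharply transitively on $\PG(\F_q^n)$, so any bijection between the abstract vertex set and the point set can be chosen; no structure is lost or gained.) Then I would simply apply the proposition to conclude that a $2$-$(mn,\Gamma,\lambda)$ design over $\F_q$ exists.

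The only point requiring a moment's care — and the closest thing to an obstacle — is verifying that ``spanning'' really does force the vertex set to be a subspace: it does so vacuously, since the full point set of $\PG(\F_q^n)$ is itself a subspace, and this is the trivial ambient case. There is no dimension-counting or counting-of-blocks subtlety here, because the GDD already supplies the blocks meeting two distinct classes, and the spanning design supplies, inside each spread member $S$ (which is a copy of $\PG(\F_q^n)$), the blocks accounting for the pairs of points lying in $S$. Thus the union of block collections exhausts all pairs of $\PG(\F_q^{mn})$ with the correct multiplicity $\lambda$, exactly as in the proof of the proposition. I would therefore present the corollary's proof as: ``Immediate from the previous proposition, since a spanning $2$-$([n]_q,\Gamma,\lambda)$ design is the same thing as a $2$-$(n,\Gamma,\lambda)$ design over $\F_q$ once the vertices are identified with the points of $\PG(\F_q^n)$.''
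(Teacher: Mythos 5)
Your proof is correct and is essentially identical to the paper's: the authors also observe that a spanning $2$-$([n]_q,\Gamma,\lambda)$ design can be seen as a $2$-$(n,\Gamma,\lambda)$ design over $\F_q$ simply by renaming the vertices with the points of $\PG(\F_q^n)$ (the full point set being trivially a subspace), and then state the corollary as an immediate consequence of the preceding composition proposition. The aside about the Singer group is harmless but unnecessary, since any bijection between the abstract vertex set and the point set suffices.
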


\section{Graph decompositions over finite fields by difference methods}
An automorphism of a $2$-design ${\cal D}=({\cal P},{\cal B})$, possibly over a finite field, is a bijection $\alpha: \cal P \longrightarrow {\cal P}$ 
preserving $\cal B$. Note that if ${\cal D}$ is over a finite field, then $\alpha$ necessarily maps subspaces into subspaces  and therefore
it necessarily belongs to the projective general linear group P$\Gamma$L$_v(q)$.
The set Aut$({\cal D})$ of all automorphisms of ${\cal D}$ is {\it the full automorphism group of ${\cal D}$} and it is clearly a subgroup of the symmetric group Sym$({\cal P})$.
If ${\cal D}$ is over a finite field, from what we have said above we have Aut$({\cal D})\leq$ P$\Gamma$L$_v(q)\leq $ Sym$({\cal P})$.
A 2$-(v,\Gamma,\lambda)$ design ${\cal D}=({\cal P},{\cal B})$ is {\it cyclic} if Aut$({\cal D})$ has a cyclic subgroup acting sharply transitively on $\cal P$. 
 
Using techniques based on automorphism groups of objects is not a novelty in the construction of combinatorial structures. 
We highlight a few of them used to construct designs over finite fields: the Kramer-Mesner method \cite{BKL05, BKW18-2}; the tactical-decomposition method \cite{NP15, BKW18-2};
the method of differences \cite{BN}. The last method will be used here to obtain some non-trivial cyclic graph decompositions over a finite field.

\subsection{Difference families}

\begin{defn}\label{DefDF}
Let $G$ be a group of order $v$ and let $\Gamma$ be a simple graph.  A $(v,\Gamma,\lambda)$ {\it difference family} in $G$ is a collection $\cal F$ 
of $\Gamma$-subgraphs of $\K_G$ ({\it base blocks}) such that $\Delta{\cal F}$ covers exactly $\lambda$ times the set $G^*$ of non-identity elements of $G$.
\end{defn}

If $\Gamma$ has size $s$, then the list of differences of a $\Gamma$-subgraph of $\K_G$ has size $2s$ and then
it is evident that a necessary condition for the existence of a $(v,\Gamma,\lambda)$ difference family is that $\lambda(v-1)$
is divisible by $2s$.
In the case that $\Gamma$ is the complete graph $\K_k$, one simply speaks of a $(v,k,\lambda)$
difference family in $G$. If we speak of a $(v,\Gamma,\lambda)$ difference family or a $([v]_q,\Gamma,\lambda)$
difference family without specifying the group $G$, it will be understood that $G=\Z_v$ or $G=[\Z_v]_q$, respectively.

The notion of a difference family is important in view of the following result
that is very well known when $\Gamma$ is complete (see, e.g., \cite{AB,BJL}). For a generic $\Gamma$ one can see \cite{BP,Weil}.

\begin{thm}\label{df}
If $\cal F$ is a $(v,\Gamma,\lambda)$ difference family in $G$, then the pair $(G,dev{\cal F})$
is a cyclic $2-$$(v,\Gamma,\lambda)$ design.
\end{thm}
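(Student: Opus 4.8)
The plan is to show that $(G, \mathrm{dev}\,\mathcal{F})$ satisfies the two defining properties of a cyclic $2\text{-}(v,\Gamma,\lambda)$ design: every block is a $\Gamma$-subgraph of $\K_G$, and every pair of distinct points of $G$ is adjacent in exactly $\lambda$ blocks. First I would observe that each translate $B_g$ of a base block $B \in \mathcal{F}$ is obtained from $B$ by applying the bijection $x \mapsto x + g$ (additive case) or $x \mapsto xg$ (multiplicative case) of $G$, which is a graph automorphism of $\K_G$; hence $B_g \cong B \cong \Gamma$, so all blocks of $\mathrm{dev}\,\mathcal{F}$ are indeed $\Gamma$-subgraphs. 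Moreover the map $B_g \mapsto B_{g+h}$ (resp. $B_{gh}$) shows that right translation by $h \in G$ permutes the blocks of $\mathrm{dev}\,\mathcal{F}$, so $G$ embeds into $\mathrm{Aut}(G, \mathrm{dev}\,\mathcal{F})$ acting sharply transitively on the point set $G$; this gives the word \emph{cyclic} (more precisely, ``$G$-regular'', but the statement's terminology is used loosely) once the design property is established.

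Next I would verify the covering property. Fix an ordered pair $(x,y)$ of distinct elements of $G$ and count the blocks $B_g$ (over $B \in \mathcal{F}$ and $g \in G$) in which $x$ and $y$ are adjacent. Adjacency of $x,y$ in $B_g$ means there is an edge $\{b_i, b_j\}$ of $B$ with $\{b_i + g, b_j + g\} = \{x, y\}$, i.e.\ $b_i - b_j = x - y$ and $g = x - b_i$ (in the additive case; multiplicatively, $b_i b_j^{-1} = x y^{-1}$ and $g = b_i^{-1} x$). Thus the number of such pairs $(B, g)$ equals the number of occurrences of the value $x - y$ in the list of differences $\Delta\mathcal{F}$ — noting that each ordered adjacent pair in each base block contributes exactly one $(B,g)$, and that $\Delta\mathcal{F}$ records exactly these ordered adjacent pairs. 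Since $x - y \in G^*$, and $\mathcal{F}$ is a $(v,\Gamma,\lambda)$ difference family, this count is exactly $\lambda$. Hence $x$ and $y$ are adjacent in exactly $\lambda$ blocks, as required.

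Finally I would remark that a blocks-are-counted-with-multiplicity convention is in force: if the same translate $B_g$ arises from two different base blocks, or if $B_g = B_{g'}$ for $g \ne g'$ (a ``short orbit''), these are counted as distinct members of the multiset $\mathrm{dev}\,\mathcal{F}$, which is consistent with the convention that a $2$-design is a \emph{collection} (multiset) of blocks. With this convention the bijection in the previous paragraph between $\{(B,g) : x,y \text{ adjacent in } B_g\}$ and occurrences of $x-y$ in $\Delta\mathcal{F}$ is exact, with no overcounting to correct.

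The argument is essentially bookkeeping, so there is no serious obstacle; the one point requiring care is precisely the translation between ``edges of translates $B_g$'' and ``entries of the difference list $\Delta\mathcal{F}$'', making sure orientations of pairs and the multiset structure are handled correctly — in particular that an unordered edge $\{b_i,b_j\}$ of $B$ contributes the two ordered differences $b_i - b_j$ and $b_j - b_i$ to $\Delta B$, matching the two ways of writing $\{x,y\}$ as an ordered pair, so the count comes out to $\lambda$ and not $2\lambda$ or $\lambda/2$.
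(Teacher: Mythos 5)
Your proof is correct, and it is the standard counting argument: the paper itself gives no proof of this theorem, simply citing it as well known (for complete $\Gamma$) and deferring to the references for general $\Gamma$. Your bookkeeping is handled properly — in particular the bijection between triples (base block, translation, edge mapping onto $\{x,y\}$) and occurrences of $x-y$ in $\Delta\mathcal{F}$, and the multiset conventions for ${\rm dev}\mathcal{F}$ — and your side remark that ``cyclic'' should strictly read ``$G$-regular'' unless $G$ is cyclic (as it is for the paper's default choices $G=\Z_v$ or $G=[\Z_v]_q$) is an accurate observation about the statement rather than a gap in your argument.
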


The following definition is the $q$-analog of Definition \ref{DefDF}.

\begin{defn}
Let $q$ be a prime power and let $\Gamma$ be a $q$-spaceable graph.
A $(v,\Gamma,\lambda)$ difference family {\it over $\F_q$} is a $([v]_q,\Gamma,\lambda)$ difference family
in which every base block is a $\Gamma$-subspace of PG$(\F_q^v)$.
\end{defn}

Consistently with Notation \ref{q-troubles}, speaking of a $(v,C_k,\lambda)_q$ difference family
we will mean a $(v,C_{[k]_q},\lambda)$ difference family over $\F_q$. In particular, $(v,k,\lambda)_q$ difference family will mean
$(v,\K_k,\lambda)$ difference family over $\F_q$.

As a special case of Theorem \ref{df} we can state the following.

\begin{thm}\label{df_q}
The development of a $(v,\Gamma,\lambda)$ difference family over $\F_q$ is a $2$$-(v,\Gamma,\lambda)$ design over $\F_q$.
\end{thm}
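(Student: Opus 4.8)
The plan is to recognize Theorem \ref{df_q} as an immediate corollary of Theorem \ref{df}, with the only extra content being that the ``subspace'' structure of the base blocks is preserved under the group action. First I would let $\mathcal F$ be a $(v,\Gamma,\lambda)$ difference family over $\F_q$, so by definition $\mathcal F$ is in particular a $([v]_q,\Gamma,\lambda)$ difference family in the Singer group $G=[\Z_v]_q$. Applying Theorem \ref{df} with this choice of $G$, I obtain that $(G,\mathrm{dev}\,\mathcal F)$ is a cyclic $2$-$([v]_q,\Gamma,\lambda)$ design; after the standard identification of the points of $\mathrm{PG}(\F_q^v)$ with the elements of $[\Z_v]_q$ (recalled in Section 2), this is a $2$-$([v]_q,\Gamma,\lambda)$ design whose point set $\mathcal P$ is the point set of $\mathrm{PG}(\F_q^v)$.

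Next I would check the remaining requirement in Definition \ref{graphdesign_q}, namely that every block of $\mathrm{dev}\,\mathcal F$ is a $\Gamma$-subspace of $\mathrm{PG}(\F_q^v)$. The blocks of $\mathrm{dev}\,\mathcal F$ are the graphs $B_g$ for $B\in\mathcal F$ and $g\in G$. Each $B\in\mathcal F$ is by hypothesis a $\Gamma$-subspace, so its vertex set $V(B)$ is a subspace of $\mathrm{PG}(\F_q^v)$; and $B_g$ is isomorphic to $B$, hence to $\Gamma$, with vertex set the translate $V(B)g$ (written multiplicatively in the Singer group). The key point is then that the Singer group acts on $\mathrm{PG}(\F_q^v)$ as a subgroup of $\mathrm{P\Gamma L}_v(q)$ — indeed, multiplication by $g\in\F_{q^v}^*/\F_q^*$ is an $\F_q$-linear map on $\F_q^v=\F_{q^v}$ — so it sends subspaces to subspaces. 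Therefore $V(B)g$ is again a subspace, and $B_g$ is a $\Gamma$-subspace of $\mathrm{PG}(\F_q^v)$, for every $B$ and $g$.

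Combining the two observations, $(\mathcal P,\mathrm{dev}\,\mathcal F)$ is a $2$-$([v]_q,\Gamma,\lambda)$ design whose point set is that of $\mathrm{PG}(\F_q^v)$ and all of whose blocks are $\Gamma$-subspaces; by Definition \ref{graphdesign_q} this is precisely a $2$-$(v,\Gamma,\lambda)$ design over $\F_q$, which is what we had to prove.

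I do not expect any serious obstacle here: the statement is explicitly flagged in the excerpt as ``a special case of Theorem \ref{df}'', and the content reduces to the elementary fact that translations by the Singer group are collineations of $\mathrm{PG}(\F_q^v)$. The only point requiring a word of care is making the identification of $[\Z_v]_q$ with the points of $\mathrm{PG}(\F_q^v)$ explicit so that ``$\Gamma$-subgraph of $\K_G$'' matches ``$\Gamma$-subgraph of $\K_{\mathcal P}$'', and noting that multiplication by a Singer-group element, being the restriction of an $\F_q$-linear automorphism of $\F_{q^v}$, maps subspaces to subspaces — so that the subspace property of the base blocks propagates to the whole development.
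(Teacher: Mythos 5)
Your proposal is correct and follows the same route the paper takes: the paper states Theorem \ref{df_q} without proof as ``a special case of Theorem \ref{df}'', and your argument simply makes explicit the two ingredients that specialization relies on, namely the identification of the points of ${\rm PG}(\F_q^v)$ with $[\Z_v]_q$ and the fact that Singer translations are $\F_q$-linear, hence carry subspaces to subspaces, so the blocks of the development remain $\Gamma$-subspaces. No gap; your write-up is just a more detailed version of what the paper leaves implicit.
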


The above theorem has been recently used in \cite{BN} to prove the existence of a cyclic 2$-(v,3,7)_2$ design -- that is  a 
2$-(v,\K_3,7)_2$ design -- for every odd $v$. That was an improvement of \cite{ST87} where the same result
was obtained with a different approach and the additional hypothesis that $v$ was not divisible by 3.
We also recall that all the 2$-(13,3,1)_2$ designs discovered in \cite{BEO16} are obtainable via 
$(13,3,1)_2$ difference families. We will revisit one of these difference families in Subsection \ref{revisiting}.

\subsection{Relative difference families}

Here we consider an important variation of a $(v,\Gamma,\lambda)$ difference family, that is the notion
of a {\it relative difference family}.

If $H$ is a subgroup of a group $G$, we denote by $\K_{G:H}$ the complete multipartite graph 
whose parts are the right cosets of $H$ in $G$.

\begin{defn}
Let $H$ be a subgroup of order $n$ of a group $G$ of order $mn$, and let $\Gamma$ be a simple graph.  
A $(mn,n,\Gamma,\lambda)$ {\it difference family in $G$ and relative to $H$} is a collection $\cal F$ of $\Gamma$-subgraphs 
of $\K_{G:H}$ such that $\Delta{\cal F}$ covers $G\setminus H$ exactly $\lambda$ times.
\end{defn}
Note that the list of differences of a difference family as above is clearly disjoint with $H$.
Thus, if $\Gamma$ has size $s$, then the obvious necessary condition for the existence of a $(mn,n,\Gamma,\lambda)$ 
difference family is that $\lambda(m-1)n$ is divisible by $2s$.
Of course, a  $(mn,1,\Gamma,\lambda)$ difference family relative to the trivial subgroup of $G$
is nothing but a  $(mn,\Gamma,\lambda)$ difference family in $G$ as defined in the previous section.

Speaking of a $(mn,n,\Gamma,\lambda)$ difference family or a $([mn]_q,[n]_q,\Gamma,\lambda)$
difference family without specifying the group $G$ and the subgroup $H$, it will be understood that $(G,H)=(\Z_{mn},m\Z_{mn})$ 
in the former case, and that $(G,H)=([\Z_{mn}]_q,[m\Z_{mn}]_q)$ in the latter.

The members of a relative difference family are called {\it base blocks} as for ordinary difference families.
Here we are interested in {\it relative difference families over finite fields}.

\begin{defn}
Let $q$ be a prime power and let $\Gamma$ be a $q$-spaceable graph.
A {\it $(mn,n,\Gamma,\lambda)$ difference family over $\F_q$} is a $([mn]_q,[n]_q,\Gamma,\lambda)$ difference family
whose base blocks are $\Gamma$-subspaces of PG$(\F_q^{mn})$.
\end{defn}

Consistently with Notation \ref{q-troubles}, speaking of a $(mn,n,C_k,\lambda)_q$ or a\break $(mn,n,P_k,\lambda)_q$ difference family, we will mean a
$(mn,n,\Gamma,\lambda)$ difference family over $\F_q$ where $\Gamma$ is the cycle or the path of order $[k]_q$,
respectively.

We have the following result.

\begin{thm}\label{rdf}
If $\cal F$ is a $(mn,n,\Gamma,\lambda)$ difference family, then $dev{\cal F}$
is a cyclic $(mn,n,\Gamma,\lambda)$-GDD.
\end{thm}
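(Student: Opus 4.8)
The plan is to mimic the proof of Theorem~\ref{df} (the non-relative case), tracking how the partite structure is respected. Write $G=\Z_{mn}$ and $H=m\Z_{mn}$ (or, for the $q$-analog, $G=[\Z_{mn}]_q$ and $H=[m\Z_{mn}]_q$), so that the cosets of $H$ are precisely the classes $\mathcal G$ of the putative GDD. Set $\mathcal B=\mathrm{dev}\,\mathcal F=\biguplus_{B\in\mathcal F}\{B_g\mid g\in G\}$. First I would check that each $B_g$ is a legitimate block, i.e.\ a $\Gamma$-subgraph of $\K_{G:H}$: since each base block $B\in\mathcal F$ lies in $\K_{G:H}$, the endpoints of every edge of $B$ lie in distinct cosets of $H$; translation by $g$ preserves cosets of $H$ setwise (it merely permutes them), hence the endpoints of every edge of $B_g$ still lie in distinct cosets. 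So no edge of any block ever joins two points of the same class, as required.

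Next I would verify the covering condition: any two points $x,y\in G$ lying in different cosets of $H$ are adjacent in exactly $\lambda$ blocks of $\mathcal B$. The key observation is the standard translation bijection: for a fixed base block $B$, the pair $\{x,y\}$ is an edge of $B_g$ if and only if $\{x-g,\,y-g\}$ is an edge of $B$, i.e.\ if and only if $g$ is such that $x-g=b_i$, $y-g=b_j$ for some ordered pair $(b_i,b_j)$ of adjacent vertices of $B$ — equivalently $g=x-b_i$ where $b_i,b_j$ are adjacent in $B$ with $b_i-b_j=x-y$. Hence the number of translates $B_g$ ($g\in G$) in which $\{x,y\}$ is an edge equals the multiplicity of the difference $x-y$ in the list $\Delta B$ (counting the ordered pair $(x,y)$; the ordered pair $(y,x)$ contributes the difference $y-x$ and is counted the same way, and these two counts coincide by the symmetry of ``edge''). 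Summing over $B\in\mathcal F$, the total number of blocks of $\mathcal B=\mathrm{dev}\,\mathcal F$ containing the edge $\{x,y\}$ equals the multiplicity of $x-y$ in $\Delta\mathcal F$. Since $x-y\in G\setminus H$ (because $x,y$ are in distinct cosets of $H$), the defining property of a relative difference family gives that this multiplicity is exactly $\lambda$. This establishes that $(G,\mathcal G,\mathcal B)$ is a $(mn,n,\Gamma,\lambda)$-GDD, and it is cyclic because $G$ acts on $\mathcal B$ by translation, sharply transitively on $\mathcal P=G$ and preserving $\mathcal G$.

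For the $q$-analog version of the statement, the same argument applies verbatim once one recalls from Section~2 that the cosets of $[m\Z_{mn}]_q$ in $[\Z_{mn}]_q$ form the Desarguesian $(n-1)$-spread of $\PG(\F_q^{mn})$, so the classes are genuine subspaces, and that each base block of a relative difference family over $\F_q$ is by definition a $\Gamma$-subspace; translation by an element of the Singer group is a collineation, hence carries $\Gamma$-subspaces to $\Gamma$-subspaces, so every block $B_g$ is again a $\Gamma$-subspace of $\PG(\F_q^{mn})$.

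The only genuinely delicate point is bookkeeping with multiplicities when $\mathcal F$ consists of more than one base block, or when a base block has a nontrivial stabilizer under translation (so that $\mathrm{dev}\,\mathcal F$ is a multiset with repeated blocks): one must be careful that ``covers $G\setminus H$ exactly $\lambda$ times'' is interpreted as a multiset equality $\Delta\mathcal F = \lambda\,(G\setminus H)$, and correspondingly that $\mathrm{dev}\,\mathcal F$ is taken as a multiset of graphs, so that a short base block contributes its translates with the appropriate multiplicity on both sides of the count. I expect this is where care is needed, but it is routine once the translation bijection above is set up as a bijection between \emph{ordered} occurrences; everything else is formal.
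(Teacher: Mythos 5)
Your proof is correct. The paper in fact states Theorem~\ref{rdf} without proof, deferring to the literature (\cite{recursive} for complete $\Gamma$, \cite{BG,BPbica,Weil} in general), and your argument --- the translation bijection identifying, for each base block $B$, the translates $B_g$ in which $\{x,y\}$ is an edge with the ordered adjacent pairs of $B$ whose difference is $x-y$, together with the observation that translation permutes the cosets of $H$ so that blocks stay inside $\K_{G:H}$ --- is exactly the standard proof those references supply, and your care with the multiset conventions for $\Delta\mathcal F$ and $\mathrm{dev}\,\mathcal F$ is the right place to be careful.
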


For the important case that $\Gamma$ is complete see \cite{recursive}, for a general $\Gamma$ see \cite{BG,BPbica,Weil}.
As a special case of the above theorem we can state the following.

\begin{thm}\label{rdf_q}
The development of a $(mn,n,\Gamma,\lambda)$ difference family over $\F_q$
is a $(mn,n,\Gamma,\lambda)$-GDD over $\F_q$.
\end{thm}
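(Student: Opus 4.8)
The plan is to deduce Theorem~\ref{rdf_q} as an immediate specialization of Theorem~\ref{rdf}, exactly as Theorem~\ref{df_q} is deduced from Theorem~\ref{df}. So first I would invoke Theorem~\ref{rdf} applied to the group $G=[\Z_{mn}]_q$ and its subgroup $H=[m\Z_{mn}]_q$: if $\mathcal F$ is a $(mn,n,\Gamma,\lambda)$ difference family over $\F_q$, then by definition it is in particular a $([mn]_q,[n]_q,\Gamma,\lambda)$ difference family in $[\Z_{mn}]_q$ relative to $[m\Z_{mn}]_q$, and Theorem~\ref{rdf} tells us that $\mathrm{dev}\,\mathcal F$ is a cyclic $([mn]_q,[n]_q,\Gamma,\lambda)$-GDD, whose point set is the whole group $[\Z_{mn}]_q$ and whose partition into classes consists of the cosets of $[m\Z_{mn}]_q$.

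The only thing left is to check that this GDD satisfies the extra ``over $\F_q$'' requirements of Definition~\ref{gdd_q} (as extended to graphs). There are two points to verify. The first is that the classes, i.e.\ the cosets of $[m\Z_{mn}]_q$ in $[\Z_{mn}]_q$, are precisely the members of a $(n-1)$-spread of $\mathrm{PG}(\F_q^{mn})$; this is exactly the statement, recalled in Section~3, that these cosets form the Desarguesian $(n-1)$-spread of $\mathrm{PG}(\F_q^{mn})$ (see \cite{LV}), so nothing new needs to be proved. The second point is that every block of $\mathrm{dev}\,\mathcal F$ is a $\Gamma$-subspace of $\mathrm{PG}(\F_q^{mn})$, i.e.\ a graph isomorphic to $\Gamma$ whose vertex set is a subspace. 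By hypothesis each base block $B\in\mathcal F$ is already a $\Gamma$-subspace, so its vertex set $V(B)$ is a subspace; a generic block of the development is a translate $B_g$ for some $g\in[\Z_{mn}]_q$, and its vertex set is $V(B)\cdot g$. Here I would use the fact, already observed in Section~5, that the Singer group $[\Z_{mn}]_q$ acts on $\mathrm{PG}(\F_q^{mn})$ as a subgroup of $\mathrm{P\Gamma L}_{mn}(q)$; since collineations map subspaces to subspaces, $V(B)\cdot g$ is again a subspace, and $B_g\cong B\cong\Gamma$. Hence every block of the development is a $\Gamma$-subspace and $\mathrm{dev}\,\mathcal F$ is a $(mn,n,\Gamma,\lambda)$-GDD over $\F_q$.

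I do not anticipate a genuine obstacle here: the theorem is essentially a bookkeeping statement, and the substantive content is entirely contained in Theorem~\ref{rdf} (the difference-family machinery producing the combinatorial GDD) together with the standard fact that the coset partition of $[\Z_{mn}]_q$ by $[m\Z_{mn}]_q$ is the Desarguesian spread. If anything needs a word of care, it is the verification that translation by an element of the Singer group is a collineation of $\mathrm{PG}(\F_q^{mn})$ --- but this is exactly the remark made at the start of Section~5 that automorphisms of a design over a finite field lie in $\mathrm{P\Gamma L}$, applied here to say that the whole Singer action consists of collineations, so subspaces are preserved under development. With that in hand the proof is a two-line citation of Theorem~\ref{rdf} plus the subspace check, in complete parallel with the passage from Theorem~\ref{df} to Theorem~\ref{df_q}.
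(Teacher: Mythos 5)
Your proposal is correct and follows exactly the route the paper intends: the paper states Theorem~\ref{rdf_q} with no written proof, simply declaring it ``a special case'' of Theorem~\ref{rdf}, and your two verifications --- that the coset classes of $[m\Z_{mn}]_q$ form the Desarguesian $(n-1)$-spread, and that translation by the Singer group is a collineation so the developed blocks remain $\Gamma$-subspaces --- are precisely the implicit bookkeeping being appealed to. Nothing is missing.
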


The above theorem has been used in \cite{BN} to prove the existence of a cyclic $(3n,3,3,7)_2$-GDD -- that is  a 
$(3n,3,\K_7,7)$ design over $\F_2$ -- for every odd $n$. 

As an immediate consequence of Theorem \ref{rdf_q} and Corollary \ref{notsotrivial} we can state the following.
\begin{prop}\label{spanning}
If there exists a $(mn,n,\Gamma,\lambda)$ difference family over $\F_q$ and 
a spanning $2-([n]_q,\Gamma,\lambda)$ design, then there exists a $2-(mn,\Gamma,\lambda)$ design over $\F_q$
which is cyclic if the spanning design has this property.
\end{prop}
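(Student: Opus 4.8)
The plan is to chain the two results quoted just above the statement and then add a short equivariance argument for the cyclicity claim. First I would apply Theorem~\ref{rdf_q}: from the assumed $(mn,n,\Gamma,\lambda)$ difference family over $\F_q$ we obtain its development $\operatorname{dev}{\cal F}$, a $(mn,n,\Gamma,\lambda)$-GDD over $\F_q$, say $({\cal P},{\cal S},{\cal B})$, where ${\cal P}$ is the point set of PG$(\F_q^{mn})$, ${\cal S}$ is the Desarguesian $(n-1)$-spread, i.e.\ the set of cosets of $[m\Z_{mn}]_q$, and ${\cal B}=\operatorname{dev}{\cal F}$. Since a spanning $2$-$([n]_q,\Gamma,\lambda)$ design also exists by hypothesis, Corollary~\ref{notsotrivial} immediately produces a $2$-$(mn,\Gamma,\lambda)$ design over $\F_q$. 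This settles the first assertion, and it remains only to show the construction can be made cyclic when the spanning design is.

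For that I would be careful about how the spread elements get filled in. By construction ${\cal B}=\operatorname{dev}{\cal F}$ is invariant under the Singer group $[\Z_{mn}]_q$, which acts sharply transitively on ${\cal P}$ and transitively on ${\cal S}$ with the stabiliser of a class $S$ equal to $[m\Z_{mn}]_q$; moreover $[m\Z_{mn}]_q$ is cyclic of order $[n]_q$ and acts on $S$ precisely as a Singer cycle of $S\cong\mathrm{PG}(\F_q^n)$. Fix one class $S_0$ and, assuming the spanning $2$-$([n]_q,\Gamma,\lambda)$ design is cyclic, transport it onto $S_0$ so that its sharply transitive cyclic automorphism group is identified with $[m\Z_{mn}]_q$ acting on $S_0$; let ${\cal B}_{S_0}$ be the resulting block collection, which is then $[m\Z_{mn}]_q$-invariant. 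For an arbitrary class $S=S_0 g$ set ${\cal B}_S=\{Bg\mid B\in{\cal B}_{S_0}\}$. This is well defined because if $S_0 g=S_0 g'$ then $gg'^{-1}\in[m\Z_{mn}]_q$ and ${\cal B}_{S_0}g={\cal B}_{S_0}g'$, the collection ${\cal B}_{S_0}$ being fixed by $[m\Z_{mn}]_q$; and then the whole family $\bigcup_{S\in{\cal S}}{\cal B}_S$ is visibly $[\Z_{mn}]_q$-invariant.

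Finally I would note that $\bigl({\cal P},\,{\cal B}\cup\bigcup_{S\in{\cal S}}{\cal B}_S\bigr)$ is a $2$-$(mn,\Gamma,\lambda)$ design over $\F_q$: edges joining two points of the same class are covered $\lambda$ times by the fill-in blocks, and edges joining points of distinct classes are covered $\lambda$ times by ${\cal B}$, exactly as in the composition argument behind Corollary~\ref{notsotrivial}. Since both ${\cal B}$ and $\bigcup_S{\cal B}_S$ are invariant under the cyclic group $[\Z_{mn}]_q$ acting sharply transitively on ${\cal P}$, the design is cyclic. I do not expect a genuine obstacle here; the only point deserving attention is the compatibility of the fill-in across the spread, which works precisely because the Singer group of a spread element coincides with the cyclic group witnessing cyclicity of the spanning design.
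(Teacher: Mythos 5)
Your proof is correct and follows the same route the paper intends: the paper states this proposition without proof, merely as an immediate consequence of Theorem~\ref{rdf_q} and Corollary~\ref{notsotrivial}, which is exactly your first paragraph. Your remaining paragraphs supply the equivariance argument for the cyclicity claim that the paper leaves entirely implicit, and that argument is sound --- the key points (transitivity of the Singer group $[\Z_{mn}]_q$ on the Desarguesian spread, the stabiliser $[m\Z_{mn}]_q$ of a class acting on it as a Singer cycle of PG$(\F_q^n)$, and the well-definedness of the transported fill-in ${\cal B}_{S_0g}={\cal B}_{S_0}g$) are all handled correctly.
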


\subsection{Use of multipliers}
Let $\cal F$ be a difference family in a group $G$ and let $\alpha$ be an automorphism of $G$.
One says that $\alpha$ is a {\it multiplier} of $\cal F$ if it leaves $\cal F$ invariant. 
Then the multipliers of $\cal F$ clearly form a subgroup of the automorphism group
of the design $\cal D$ generated by $\cal F$. Of course, it is not said that every automorphism of $\cal D$ is a composition
of a translation with a multiplier. 
Indeed the normalizer of the group of translations in the symmetric group on $G$ may contain elements that are not multipliers.

Using a difference family $\cal F$ in a group $G$ to construct a design $\cal D$
significantly reduces the number of blocks one needs to find.
Yet, $|{\cal F}|$ can be still quite ``big", hence the problem could appear to be hard anyway. 
So one could try to impose that $\cal F$ has a big group $A$ of multipliers
with a ``small" number of orbits (possibly one!) on $\cal F$. 
In this case it is enough to give a set of {\it initial base blocks} for $\cal F$, i.e., a complete system $\cal S$ 
of representatives for the $A$-orbits on the base blocks of $\cal F$; only one block, chosen arbitrarily, 
in each $A$-orbit on $\cal F$.

In Section \ref{Singersection} we will see how the construction of some ``difference graphs" (that are difference families with
only one base block) is facilitated if one imposes a group of multipliers.

Most constructions for difference families in a group $G$ have a group $A$ of multipliers 
acting {\it semiregularly} on $G^*$, i.e., on $G$ minus its the identity element. This means that the non identity elements of $A$ do not fix any element of $G^*$.
For instance, in \cite{disjoint} it is proved that there exists a {\it disjoint} $(v,k,k-1)$ difference family in $G$ whenever
Aut$(G)$ has a subgroup $A$ of order $k$ acting semiregularly on $G^*$. The base blocks of this difference family $\cal F$
are simply the $A$-orbits on $G^*$, hence $A$ is a group of multipliers of $\cal F$ fixing every base block.

More frequently, the construction of an ordinary difference family $\cal F$ in a group $G$ can be realized by imposing a group $A$ of multipliers
acting semiregularly both on $G^*$ and $\cal F$. This strategy is often successful when $G$ is elementary abelian, i.e., the additive group of a finite field. 

As far as we are aware, a formal description of how this strategy works in the general case is lacking. 
We give this description in the proof of the following theorem.

\begin{thm}\label{multiplier1}
Let $\Gamma$ be a graph of size $s$ and let $G$ be a group of order $v$ with $\lambda(v-1)=2st$.
Assume that $A$ is a subgroup of $Aut(G)$ of order a divisor $d$ of $\gcd(v-1,t)$ acting semiregularly on $G^*$.
Also assume that ${\cal I}$ is a ${t\over d}$-collection of $\Gamma$-subgraphs of $\K_G$ with $\Delta{\cal I}$ evenly distributed over the 
${v-1\over d}$ orbits of $A$ on $G^*$. Then ${\cal I}$ is a collection of initial base blocks of a $(v,\Gamma,\lambda)$ difference family in $G$.
\end{thm}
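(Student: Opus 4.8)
The plan is to make explicit the difference family for which ${\cal I}$ will be a system of initial base blocks. Given a $\Gamma$-subgraph $B$ of $\K_G$ and $\alpha\in\mathrm{Aut}(G)$, write $B^{\alpha}$ for the graph with vertex set $\alpha(V(B))$ and edge set $\{\{\alpha(x),\alpha(y)\}:\{x,y\}\in E(B)\}$, and set ${\cal I}^{\alpha}=\{B^{\alpha}\ |\ B\in{\cal I}\}$. I would then take
$${\cal F}=\biguplus_{\alpha\in A}{\cal I}^{\alpha}$$
and prove that ${\cal F}$ is a $(v,\Gamma,\lambda)$ difference family in $G$; since ${\cal F}$ is by construction the multiset union of the $A$-orbits of the members of ${\cal I}$, this is exactly the assertion that ${\cal I}$ is a collection of initial base blocks for it.

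Two preliminary facts come first. Because $\alpha$ is a bijection of $G$ carrying edges to edges and non-edges to non-edges, each $B^{\alpha}$ is again a $\Gamma$-subgraph of $\K_G$; and because $\alpha$ respects the group operation, $\Delta(B^{\alpha})=\alpha(\Delta B)$ as multisets, where $\alpha$ is applied entrywise. Summing over ${\cal I}$ and over $A$ gives $\Delta{\cal F}=\biguplus_{\alpha\in A}\alpha(\Delta{\cal I})$, and I note that $\Delta{\cal I}$ is supported on $G^{*}$ since the endpoints of an edge are distinct. Next I would unwind the hypothesis ``$\Delta{\cal I}$ evenly distributed over the $(v-1)/d$ orbits of $A$ on $G^{*}$''. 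The multiset $\Delta{\cal I}$ has cardinality $2s\cdot (t/d)$; as $A$ acts semiregularly on $G^{*}$, all its orbits there have size $d$, so there are indeed $(v-1)/d$ of them; hence the even-distribution hypothesis says precisely that, writing $m(h)$ for the multiplicity of $h$ in $\Delta{\cal I}$, one has $\sum_{h\in O}m(h)=\frac{2s(t/d)}{(v-1)/d}=\frac{2st}{v-1}=\lambda$ for every $A$-orbit $O$ on $G^{*}$.

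The core of the proof is then a one-line count. Fix $g\in G^{*}$ and let $O_g$ be its $A$-orbit. Semiregularity means the stabilizer of $g$ in $A$ is trivial, so $\alpha\mapsto\alpha^{-1}(g)$ is a bijection from $A$ onto $O_g$. Reading off the multiplicity of $g$ in $\Delta{\cal F}=\biguplus_{\alpha\in A}\alpha(\Delta{\cal I})$, it equals $\sum_{\alpha\in A}m(\alpha^{-1}(g))=\sum_{h\in O_g}m(h)=\lambda$. Since $g\in G^{*}$ was arbitrary, $\Delta{\cal F}$ covers $G^{*}$ exactly $\lambda$ times, so ${\cal F}$ is a $(v,\Gamma,\lambda)$ difference family; it has $|A|\cdot|{\cal I}|=t$ base blocks counted with multiplicity, consistently with $\lambda(v-1)=2st$. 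I do not expect a genuine obstacle here: the argument is pure bookkeeping, and the only points needing a little care are the correct reading of ``evenly distributed'' as the displayed identity for every orbit, and the tracking of multiplicities as $\alpha$ runs through $A$ in the final count.
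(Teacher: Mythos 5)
Your proposal is correct and follows essentially the same route as the paper: both form ${\cal F}$ as the multiset union of the $A$-images of the initial blocks, use $|\Delta{\cal I}|=2st/d=\lambda(v-1)/d$ together with the even-distribution hypothesis to get multiplicity $\lambda$ per $A$-orbit, and invoke semiregularity to turn each orbit sum into an exact $\lambda$-fold cover of $G^*$. The only cosmetic difference is that the paper decomposes $\Delta{\cal I}$ into $\lambda$ complete systems of orbit representatives, whereas you fix $g\in G^*$ and count over $\alpha\in A$ directly; the two counts are identical.
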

\begin{proof}
Set ${\cal F}=\{B^\alpha \ | \ B\in{\cal I},\alpha\in A\}$. We have $\Delta B^\alpha=\{x^\alpha \ | \ x\in \Delta B\}$ for every pair $(B,\alpha)\in{\cal I}\times A$ and then
$\Delta{\cal F}=\{x^\alpha \ | \ x\in \Delta{\cal I},\alpha\in A\}$. We have $|\Delta{\cal I}|=2s|{\cal I}|={2st\over d}=\lambda{v-1\over d}$ and then
$\Delta{\cal I}$ has exactly $\lambda$ elements in each $A$-orbit on $G^*$ by our assumption that $\Delta{\cal I}$ is evenly distributed over the orbits of $A$.
This means that $\Delta{\cal I}$ is the multiset sum of $\lambda$ complete systems of representatives for the $A$-orbits on $G^*$, 
say ${\cal S}_1$, \dots, ${\cal S}_\lambda$.  
Thus we can write $$\Delta{\cal F}=\biguplus_{i=1}^\lambda\biguplus_{x\in S_i}\{x^\alpha \ | \ \alpha\in A\}.$$
Let Stab$(x)$ and Orb$(x)$ be the stabilizer and the orbit of $x$ under the action of $A$. Then $\{x^\alpha \ | \ \alpha\in A\}$ is the multiset sum of 
${|A|\over |Stab(x)|}$ copies of Orb$(x)$. On the other hand $A$ acts semiregularly on $G^*$ by assumption, hence Stab$(x)$ is always
trivial and then $\{x^\alpha \ | \ \alpha\in A\}=Orb(x)$ for every $x\in S_i$. Thus we have
$\biguplus_{x\in S_i}\{x^\alpha \ | \ \alpha\in A\}=G^*$ for $1\leq i\leq \lambda$ and
we conclude that $\Delta{\cal F}$ is the multiset sum of $\lambda$ copies of $G^*$, i.e., $\cal F$ is a $(v,\Gamma,\lambda)$ difference family in $G$.
The assertion follows.
\end{proof}

Assume, for instance, that $q=k(k-1)t+1$ is a prime power and that we want to find a $(q,k,1)$ difference family in the elementary 
abelian group $EA(q)$, that is the additive group of $\F_q$. Thus we want a
$(q,\Gamma,1)$ difference family in $EA(q)$ where $\Gamma=\K_k$ has size $s={k(k-1)\over2}$. 
Let $C$ be the subgroup of $t$-th roots of unity of $\F_q^*$ and set
$A=\{\alpha_c \ | \ c\in C\}$ where $\alpha_c$ is the automorphism of EA$(q)$ defined by $\alpha_c(x)=cx$ for every $x\in\F_q$.
It is obvious that $A$ is a group of automorphisms of EA$(q)$ isomorphic to $C$ that acts semiregularly on $\F_q^*$
and that the $A$-orbits on $\F_q^*$ are the $k(k-1)$ cosets of $C$ in $\F_q^*$.
Thus, if we find a $k$-subset $B$ of $\F_q$ such that $\Delta B$ has exactly one element
in each of these cosets, then a set $\cal I$ of initial base blocks for the required family is the singleton $\{B\}$ by Theorem \ref{multiplier1}.
This is the famous ``Wilson's lemma on evenly distributed differences" \cite{W72}. At first sight one could think that to
find such a set $B$ is almost a miracle but, as proved by Wilson himself, this strategy always succeeds 
whenever $v$ is sufficiently large (see also \cite{Weil}).

It is easy to see that Theorem \ref{multiplier1} can be generalized to the following.

\begin{thm}\label{multiplier2}
Let $\Gamma$ be a graph of size $s$, let $G$ be a group of order $mn$ with $\lambda(m-1)n=2st$, and let $H$ be a subgroup of $G$ of order $n$.
Assume that $A$ is a subgroup of $Aut(G)$ of order a divisor $d$ of $\gcd(mn-n,t)$ acting semiregularly on $G\setminus H$.
Also assume that ${\cal I}$ is a ${t\over d}$-collection of $\Gamma$-subgraphs of $\K_{G:H}$ with $\Delta{\cal I}$ evenly distributed over the 
${(m-1)n\over d}$ orbits of $A$ on $G\setminus H$. Then ${\cal I}$ is a collection of initial base blocks of a $(mn,n,\Gamma,\lambda)$ difference family in $G$
relative to $H$.
\end{thm}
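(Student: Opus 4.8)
The plan is to mirror the proof of Theorem \ref{multiplier1} essentially verbatim, replacing the ambient set $G^*$ by the ``relative'' set $G\setminus H$ throughout, and checking that every step of the original argument survives this substitution. The point is that in the proof of Theorem \ref{multiplier1} the only facts actually used about $G^*$ were: (a) it is the set over which the list of differences of a difference family must be evenly distributed; (b) it has $v-1$ elements, matching the arithmetic $\lambda(v-1)=2st$; (c) $A$ acts on it semiregularly; and (d) $\Delta B$ for a $\Gamma$-subgraph of $\K_G$ (resp.\ $\K_{G:H}$) has size $2s$. In the relative setting these become: (a$'$) $G\setminus H$ is the set a relative difference family must cover $\lambda$ times; (b$'$) $|G\setminus H| = mn-n = (m-1)n$, matching $\lambda(m-1)n=2st$; (c$'$) $A$ acts semiregularly on $G\setminus H$ by hypothesis; (d$'$) a $\Gamma$-subgraph of $\K_{G:H}$ still has $s$ edges, so its list of differences still has size $2s$.

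First I would set ${\cal F}=\{B^\alpha \ | \ B\in{\cal I},\ \alpha\in A\}$ and note, exactly as before, that $\Delta B^\alpha=\{x^\alpha \ | \ x\in\Delta B\}$ for each $(B,\alpha)$, so $\Delta{\cal F}=\{x^\alpha \ | \ x\in\Delta{\cal I},\ \alpha\in A\}$. Here one should observe that because $A\le\mathrm{Aut}(G)$ stabilizes $H$ setwise (which is automatic once $A$ acts on $G\setminus H$, since then it must also permute $H$), each $B^\alpha$ is again a $\Gamma$-subgraph of $\K_{G:H}$, so ${\cal F}$ really is a candidate relative difference family and $\Delta{\cal F}$ is disjoint from $H$. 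Then I would count: $|\Delta{\cal I}|=2s|{\cal I}|=2s\cdot\frac{t}{d}=\frac{2st}{d}=\lambda\cdot\frac{(m-1)n}{d}$, which by the even-distribution hypothesis means $\Delta{\cal I}$ contains exactly $\lambda$ elements in each of the $\frac{(m-1)n}{d}$ orbits of $A$ on $G\setminus H$. Hence $\Delta{\cal I}$ is the multiset sum of $\lambda$ complete systems of representatives ${\cal S}_1,\dots,{\cal S}_\lambda$ for the $A$-orbits on $G\setminus H$, giving
$$\Delta{\cal F}=\biguplus_{i=1}^\lambda\biguplus_{x\in S_i}\{x^\alpha \ | \ \alpha\in A\}.$$
By semiregularity each stabilizer is trivial, so $\{x^\alpha \ | \ \alpha\in A\}=\mathrm{Orb}(x)$, and therefore $\biguplus_{x\in S_i}\{x^\alpha \ | \ \alpha\in A\}=G\setminus H$ for every $i$; summing over $i$ shows $\Delta{\cal F}$ covers $G\setminus H$ exactly $\lambda$ times, i.e.\ ${\cal F}$ is a $(mn,n,\Gamma,\lambda)$ difference family in $G$ relative to $H$, with ${\cal I}$ a set of initial base blocks by construction.

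There is essentially no hard part here — this is the ``easy to see'' generalization the authors announce. The only mild subtlety, and the step I would be most careful about, is the bookkeeping that $A$ acting semiregularly on $G\setminus H$ forces $A$ to fix $H$ (needed so that the $B^\alpha$ stay inside $\K_{G:H}$ and so that the orbit count $\frac{(m-1)n}{d}$ is an integer), together with verifying that the divisibility hypothesis $d\mid\gcd(mn-n,t)$ is exactly what is needed to make both $\frac{t}{d}$ and $\frac{(m-1)n}{d}$ integers so the statement is not vacuous. Everything else is a line-by-line transcription of the proof of Theorem \ref{multiplier1}. Accordingly I would likely present the proof very tersely, e.g.\ ``The proof is identical to that of Theorem \ref{multiplier1} upon replacing $G^*$ by $G\setminus H$ and $v-1$ by $(m-1)n$,'' and then reproduce the displayed chain of equalities above for the reader's convenience.
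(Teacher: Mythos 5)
Your proof is correct and is exactly the argument the paper intends: the paper states this theorem without proof, remarking only that it is an easy generalization of Theorem \ref{multiplier1}, and your line-by-line adaptation (replacing $G^*$ by $G\setminus H$ and $v-1$ by $(m-1)n$, with the additional observation that $A$ must stabilize $H$ setwise so that the translated blocks remain in $\K_{G:H}$) is precisely that generalization. No issues.
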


The above two theorems can be reformulated -- mutatis mutandis -- almost in the same way for difference families over a finite field but now there is a very a big ``handicap";
indeed in this case the subgroup $A$ of Aut$([\Z_v]_q)$ cannot be arbitrary since it must map subspaces into subspaces. This may happen only if
$A$ is a subgroup of the (unfortunately quite ``small") group $$Frob([\Z_v]_q)=\{\phi^i \ | \ 0\leq i\leq v-1\}$$ 
where $\phi$ is the {\it Frobenius automorphism} defined by
$\phi(x)=x^q$ for every $x\in [\Z_v]_q$.

A further inconvenience is that $Frob([\Z_v]_q)$ may not have any subgroup acting semiregularly on the complement
of the subgroup $H$ of $[\Z_v]_q$ that one needs.
Let us examine, for instance, what happens in the case that $H=\{1\}$.

\begin{prop}
$Frob([\Z_v]_q)$ has a non-trivial subgroup acting semiregularly on $[\Z_v]_q\setminus\{1\}$
if and only if $v$ is a prime and $q\not\equiv1$ $($mod $v)$.
\end{prop}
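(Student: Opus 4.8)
Suppose $\phi^i$ generates a non-trivial subgroup $A \le \mathrm{Frob}([\Z_v]_q)$ acting semiregularly on $[\Z_v]_q \setminus \{1\}$. The plan is to identify $[\Z_v]_q$ with $\F_{q^v}^*/\F_q^*$ and to track which elements are fixed by $\phi^i$. An element $x\F_q^* \in [\Z_v]_q$ is fixed by $\phi^i$ precisely when $x^{q^i} \in x\F_q^*$, i.e. when $x^{q^i - 1} \in \F_q^*$; equivalently $x^{(q^i-1)(q-1)} = 1$, so $x$ lies in the unique subgroup of $\F_{q^v}^*$ of order $\gcd((q^i-1)(q-1), q^v-1)$. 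The identity coset is always fixed, so semiregularity forces this subgroup to be exactly $\F_q^*$ for every $i$ with $\phi^i$ non-trivial, i.e. for every $i \not\equiv 0 \pmod{v}$ (here one uses that $\phi$ has order exactly $v$ on $[\Z_v]_q$, which holds because $\F_{q^v}$ is the splitting field). Thus the condition to analyse is: $\gcd((q^i-1)(q-1),\, q^v-1) = q-1$ for all $i = 1, \dots, v-1$.

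First I would reduce this gcd condition. Since $q - 1 \mid q^i - 1$ and $q-1 \mid q^v-1$, we always have $q-1$ dividing the gcd, so the content of the condition is that no extra factor appears. Writing $d = \gcd(i, v)$ we have $\gcd(q^i - 1, q^v - 1) = q^d - 1$, so $\gcd((q^i-1)(q-1), q^v-1)$ is a multiple of $q^d - 1$. Hence if some $i \in \{1,\dots,v-1\}$ has $\gcd(i,v) = d$ with $1 < d < v$ — which happens exactly when $v$ is composite, taking $d$ to be a proper divisor of $v$ — we get $q^d - 1 > q - 1$ dividing the gcd, so the gcd strictly exceeds $q-1$ and the action is not semiregular. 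This forces $v$ to be prime. When $v$ is prime, every $i \in \{1,\dots,v-1\}$ is coprime to $v$, so $\gcd(q^i-1,q^v-1) = q-1$, and the remaining question is the contribution of the extra factor $q-1$: we need $\gcd((q^i-1)(q-1), q^v-1) = q-1$, equivalently $\gcd\bigl(\frac{q^i-1}{q-1}\cdot(q-1),\, \frac{q^v-1}{q-1}\cdot(q-1)\bigr)$ collapses to $q-1$. This requires understanding when $q-1$ shares a factor with $\frac{q^v-1}{q-1} = 1 + q + \dots + q^{v-1} \equiv v \pmod{q-1}$.

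The cleanest route for the prime case: taking $i=1$, the condition $\gcd((q-1)^2, q^v - 1) = q-1$ says $\gcd\bigl(q-1, \frac{q^v-1}{q-1}\bigr) = 1$, and since $\frac{q^v-1}{q-1} \equiv v \pmod{q-1}$, this is $\gcd(q-1, v) = 1$, i.e. $q \not\equiv 1 \pmod v$ (using $v$ prime). Conversely, when $v$ is prime and $q \not\equiv 1 \pmod v$, I would check that $\langle \phi \rangle$ itself acts semiregularly: for any $i \in \{1,\dots,v-1\}$, $\gcd(i,v)=1$ gives $\gcd(q^i-1,q^v-1) = q-1$; and a short argument shows the extra factor $q-1$ contributes nothing new because any prime power $\ell^a \| q-1$ divides $q^i - 1$ already to the needed order while $\frac{q^v-1}{q-1} \equiv v$ is coprime to $q-1$, so the full subgroup of $\F_{q^v}^*$ fixed by $\phi^i$ is just $\F_q^*$, giving semiregularity of $\langle\phi\rangle$ on $[\Z_v]_q\setminus\{1\}$.

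The main obstacle is the lifting-the-exponent bookkeeping in the prime case: one has to verify carefully that $\gcd((q^i-1)(q-1), q^v-1) = q-1$ for all $i$, not just $i=1$, which amounts to showing that multiplying by the extra $(q-1)$ never creates a common factor with $\frac{q^v-1}{q-1}$ beyond what is already controlled. The slick way around this is to note $\frac{q^v-1}{q-1} \equiv v \pmod{q-1}$ once and for all, so $\gcd(q-1, \frac{q^v-1}{q-1}) = \gcd(q-1,v)$, and $v$ prime together with $q\not\equiv 1\pmod v$ forces this to be $1$; then $\gcd((q^i-1)(q-1), q^v-1)$ divides $(q-1)\cdot\gcd(q^i-1, q^v-1)\cdot \text{(correction)}$ and the correction is trivial. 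I would state this as a lemma on gcds of $q$-analogs and cite or prove it inline, keeping the projective-geometry interpretation (Frobenius fixes a subspace iff it fixes the corresponding cyclic subgroup structure) only as motivation.
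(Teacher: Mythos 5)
Your proof is correct, and it rests on the same two number-theoretic pillars as the paper's --- the identity $\gcd(q^m-1,q^n-1)=q^{\gcd(m,n)}-1$ and the congruence $[v]_q\equiv v\pmod{q-1}$ --- but it packages them differently. You stay inside $\F_{q^v}^*/\F_q^*$ and compute once and for all that the fixed-point set of $\phi^i$ is the subgroup of order $\gcd((q^i-1)(q-1),q^v-1)/(q-1)$, so both halves of the proposition fall out of a single gcd condition: exponents $i$ with $1<\gcd(i,v)<v$ kill composite $v$, and the case $i=1$ yields $\gcd(q-1,v)=1$. The paper instead transfers the action to $\Z_{[v]_q}$ (multiplication by $q$), exhibits the explicit fixed element $\frac{q^v-1}{q^d-1}$ of $\langle q^d\rangle$ to force $d=1$ and hence $v$ prime, and then obtains $q\not\equiv1\pmod v$ by orbit counting ($v$ must divide $[v]_q-1$); its converse is essentially identical to yours ($\gcd(q-1,[v]_q)=\gcd(q-1,v)=1$, hence trivial stabilizers). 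Your version is arguably more uniform, at the price of the valuation bookkeeping needed to conclude $\gcd((q^i-1)(q-1),q^v-1)=q-1$ from $\gcd(q^i-1,q^v-1)=q-1$ and $\gcd(q-1,[v]_q)=1$; that ``correction is trivial'' step should be written out prime by prime, but it is routine. One logical point to tighten: you pass from ``some non-trivial $\langle\phi^i\rangle$ is semiregular'' to requiring the gcd condition for \emph{all} $i=1,\dots,v-1$. This is only legitimate after observing that a non-trivial subgroup $\langle\phi^j\rangle$ equals $\langle\phi^d\rangle$ with $d=\gcd(j,v)$ and contains $\phi^d$, whose fixed subgroup has order at least $[d]_q>1$ unless $d=1$; hence the subgroup is forced to be all of $\mathrm{Frob}([\Z_v]_q)$ before you may quantify over every $i$. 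Your second paragraph already contains exactly this computation, so it is a matter of reordering the argument rather than a gap.
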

\begin{proof}
Let $F$ be the group of units of $\Z_{[v]_q}$ that is the image of $Frob([\Z_v]_q)$ under the
the natural isomorphism between $[\Z_v]_q$ and $\Z_{[v]_q}$, hence $F=\{q^i \ | \  0\leq i\leq v-1\}$.
We have to show, equivalently, that $F$ has a subgroup $A$ acting semiregularly
on $\Z_{[v]_q}\setminus\{0\}$ if and only if $v$ is a prime and $q\not\equiv1$ $($mod $v)$.

$(\Longrightarrow).$\quad
Given that $F$ is the cyclic group of order $v$ generated by $q$, we have $A=\langle q^d\rangle$ 
for some divisor $d$ of $v$. We have $q^d\cdot{q^v-1\over q^d-1}=(q^v-1)+{q^v-1\over q^d-1}\equiv {q^v-1\over q^d-1}$ (mod $[v]_q$),
hence ${q^v-1\over q^d-1}$ is fixed by $A$. It necessarily follows that ${q^v-1\over q^d-1}\equiv0$ (mod $[v]_q$)
and this is possible only for $d=1$. Thus $A=\langle q\rangle$, i.e., $A$ is necessarily the whole $F$.
This fact naturally implies that $F$ has prime order. Indeed, in the opposite case, any proper subgroup of $F$
would also act semiregularly on $\Z_{[v]_q}\setminus\{0\}$.

The $F$-orbits on $\Z_{[v]_q}\setminus\{0\}$ have all size $v$, hence we have $[v]_q-1\equiv0$ (mod $v$).
This gives $\sum_{i=1}^{v-1}q^i\equiv0$ (mod $v$) and then $q\not\equiv1$ (mod $v$)
otherwise we would have $v-1\equiv0$ (mod $v$) which is absurd.

$(\Longleftarrow).$\quad
Let $d=\gcd(q-1,[v]_q)$ so that we have $q\equiv1$ (mod $d$) and $[v]_q\equiv0$ (mod $d$).
Hence we can write $[v]_q=\sum_{i=0}^{v-1}q^i\equiv v$ (mod $d$). 
We deduce that $v\equiv0$ (mod $d$). Thus, given that $v$ is a prime, we have either $d=v$ or $d=1$.
The former case cannot happen since $q\not\equiv1$ $($mod $v)$ by assumption. We conclude that
$\gcd(q-1,[v]_q)=1$. Now assume that there is an element $x\in\Z_{[v]_q}$
whose $F$-stabilizer Stab$(x)$ is not trivial. Then, considering that $F$ has prime order, we necessarily have Stab$(x)=F$. 
This implies, in particular, that $qx\equiv x$ (mod $[v]_q$) so that
$(q-1)x$ is divisible by $[v]_q$. Thus, recalling that $[v]_q$ and $q-1$ are coprime, $[v]_q$ is a divisor of $x$, that means that $x$
is the zero element of $\Z_{[v]_q}$. We conclude that $F$ acts semiregularly on all elements of $\Z_{[v]_q}\setminus\{0\}$.
\end{proof}

In view of the above proposition, the ``$q$-analog" of Theorem \ref{multiplier1} should be more conveniently stated as follows.

\begin{thm}\label{multiplier3}
Let $v$ be a prime and let $\Gamma$ be a $q$-spaceable graph of size $s$ with $\lambda([v]_q-1)=2st$.
Assume that $v$ divides $t$ and that ${\cal I}$ is a ${t\over v}$-collection of $\Gamma$-subspaces of PG$(\F_q^v)$ with $\Delta{\cal I}$ evenly distributed over the 
${[v]_q-1\over v}$ orbits of Frob$([\Z_v]_q)$ on $[\Z_v]_q^*$. Then ${\cal I}$ is a collection of initial base blocks of a $(v,\Gamma,\lambda)$ difference family
over $\F_q$.
\end{thm}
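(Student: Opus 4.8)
The plan is to deduce Theorem~\ref{multiplier3} from Theorem~\ref{multiplier1} by instantiating the abstract group $G$ as the Singer group $[\Z_v]_q$ and the multiplier group $A$ as the whole Frobenius group $Frob([\Z_v]_q)$. First I would set $G=[\Z_v]_q$, which has order $[v]_q$, so that the hypothesis $\lambda([v]_q-1)=2st$ matches the ``$\lambda(v-1)=2st$'' of Theorem~\ref{multiplier1} with the roles of $v$ renamed. Next I would take $A=Frob([\Z_v]_q)=\{\phi^i \mid 0\le i\le v-1\}$, which is a subgroup of $Aut([\Z_v]_q)$ of order $d=v$ (here one uses that $v$ is prime, so $\phi$ has exact order $v$ and the group is genuinely of order $v$ rather than a proper divisor). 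The divisibility hypotheses of Theorem~\ref{multiplier1} then read: $d=v$ must divide $\gcd([v]_q-1,t)$. We are told $v\mid t$ by hypothesis, and the proof of the preceding Proposition already establishes that the $Frob$-orbits on $[\Z_v]_q^*$ all have size $v$ (as $\gcd(q-1,[v]_q)=1$ forces semiregularity), hence $v\mid [v]_q-1$; so $v\mid\gcd([v]_q-1,t)$ as required. The same Proposition gives precisely that $A$ acts semiregularly on $[\Z_v]_q^*=[\Z_v]_q\setminus\{1\}$, which is the remaining structural hypothesis of Theorem~\ref{multiplier1}.

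With these identifications, the collection $\cal I$ of $\frac{t}{v}$ many $\Gamma$-subspaces of $\PG(\F_q^v)$, with $\Delta{\cal I}$ evenly distributed over the $\frac{[v]_q-1}{v}$ orbits of $Frob([\Z_v]_q)$ on $[\Z_v]_q^*$, is exactly a collection $\cal I$ as in Theorem~\ref{multiplier1} (a $\frac{t}{d}$-collection with $\frac{d}{d}=\frac{v}{v}$, and $\frac{[v]_q-1}{d}$ orbits). Theorem~\ref{multiplier1} then yields that ${\cal F}=\{B^{\phi^i}\mid B\in{\cal I},\,0\le i\le v-1\}$ is a $([v]_q,\Gamma,\lambda)$ difference family in $[\Z_v]_q$. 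The one extra point to verify, beyond a direct quotation of Theorem~\ref{multiplier1}, is that this is a difference family \emph{over $\F_q$}, i.e.\ that every base block of $\cal F$ is a $\Gamma$-subspace of $\PG(\F_q^v)$. This is where the special choice of $A$ pays off: each $B\in{\cal I}$ is a $\Gamma$-subspace by hypothesis, and $\phi$ is a field automorphism, hence maps subspaces of $\PG(\F_q^v)$ to subspaces of the same dimension; therefore each $B^{\phi^i}$ is again a $\Gamma$-subspace. Thus every member of $\cal F$ is a $\Gamma$-subspace, and $\cal F$ is a $(v,\Gamma,\lambda)$ difference family over $\F_q$ with $\cal I$ as a set of initial base blocks.

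I do not anticipate a genuine obstacle here; the statement is, as the text says, the ``$q$-analog'' of Theorem~\ref{multiplier1} obtained by the only admissible choice of multiplier group. The one place that requires a little care, rather than a blind citation, is confirming that the abstract hypotheses of Theorem~\ref{multiplier1} are actually met: namely that $Frob([\Z_v]_q)$ has order exactly $v$ and acts semiregularly on $[\Z_v]_q^*$, both of which rest on $v$ being prime (and the consequence $\gcd(q-1,[v]_q)=1$, which in the preceding Proposition needed the additional input $q\not\equiv 1\pmod v$). One should note that the hypothesis $q\not\equiv1\pmod v$ is not listed in Theorem~\ref{multiplier3}; however, when $q\equiv1\pmod v$ one has $v\mid q-1\mid[v]_q-1$ failing — indeed $[v]_q\equiv v\equiv0\pmod v$ would contradict $v\mid[v]_q-1$ — so the orbit-size condition $v\mid[v]_q-1$ implicit in ``$\frac{[v]_q-1}{v}$ orbits'' already forces $q\not\equiv1\pmod v$, and the hypothesis is automatically satisfied whenever the statement is non-vacuous. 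With that caveat absorbed, the semiregularity and the subspace-preservation of $\phi$ together with Theorem~\ref{multiplier1} give the result immediately.
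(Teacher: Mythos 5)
Your proposal is correct and follows exactly the route the paper intends: the paper states Theorem~\ref{multiplier3} without a separate proof, presenting it as the $q$-analog of Theorem~\ref{multiplier1} obtained by taking $G=[\Z_v]_q$ and $A=\mathrm{Frob}([\Z_v]_q)$, with the preceding Proposition supplying semiregularity and the subspace-preservation of $\phi$ guaranteeing that the developed blocks remain $\Gamma$-subspaces. Your extra observation that the unstated hypothesis $q\not\equiv1\pmod v$ is forced by the requirement that $\frac{[v]_q-1}{v}$ be the number of Frobenius orbits is a worthwhile clarification of a point the paper leaves implicit.
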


The shortage of multipliers is one of the main reasons why constructing difference families over a finite field is in general a very hard task.
The search for difference families in a group $G$ could be enormously facilitated by the use of 
the automorphisms of $G$ to the point that, in some cases, it is enough to find only an initial base block for them. 
On the other hand, for a difference family $\cal F$ over a finite field the number of automorphisms that one can use is
very small compared with the size of $\cal F$ and hence, in general, one needs a huge number of initial base blocks anyway.

\subsection{An example of a $(7,Q_3^*,1)$ difference family over $\F_2$}\label{Q_3^*}
Let us show a concrete example where Theorem \ref{multiplier3} can be applied.
Other examples will be given in the next sections.

\medskip\noindent
Let $Q_3^*$ be the cube $Q_3$ with one vertex deleted
and let us construct a cyclic 2$-(7, Q_3^*,1)$ design over $\F_2$.
By Theorem \ref{df_q}, it is enough to exhibit a $(7,Q_3^*,1)$ difference family over $\F_2$.
Note that $Q_3^*$ has size $s=9$ and that we have $[7]_2-1=126=2st$ with $t=7$.
Thus, by Theorem \ref{multiplier3} we need only one initial $Q_3^*$-plane $B$ of PG$(\F_2^7)$
with the property that its list of differences has exactly one element in each orbit of Frob$([\Z_7]_2)$ on $[\Z_7]_2^*$.

Let us take a root $g$ of the polynomial $x^7+x+1$ as generator of $[\Z_7]_2$ and consider
the natural isomorphism $f$ between $[\Z_7]_2$ and $\Z_{127}$  mapping $g$ into 1.
Note that $p=127$ is a prime and that the image under $f$ of the orbits of Frob$([\Z_7]_2)$ on $[\Z_7]_2^*$
are the cosets of the group $\{2^i \ | \ 0\leq i\leq6\}$ of the 7th roots of unity in $\Z_p^*$, i.e., the {\it cyclotomic classes} of order 18.
Also note that $f$ maps the list of differences of $B$ into the list of differences of $f(B)$.
Thus a $Q_3^*$-plane $B$ satisfies our requirement provided that the list of differences of $B':=f(B)$ 
has exactly one element in each cyclotomic class of order 18. Let $r$ denote a primitive element of $\Z_p$. It is a standard exercise to verify that this
means that the logarithmic map 
$$Log: r^i \in \Z_{p}^* \longrightarrow i\in\Z_{18}$$
is bijective on $\Delta B'$.
Then our strategy to find the $Q_3^*$-plane $B$ is the following:
\begin{itemize}
\item[(i)] find a plane $\pi$ of PG$(\F_2^7)$ such that the list of differences of $f(\pi)$
intersects each cyclotomic class of order 18 in \underline{at least} one element or, equivalently,
in such a way that the map $Log$ is surjective on $\Delta f(\pi)$.
\item[(ii)] construct a copy $B'$ of $Q_3^*$ with vertex set $f(\pi)$ in such a way that 
the map $Log$ is injective on $\Delta B'$.
\end{itemize}
Consider the plane $\pi=\langle 1, g^2, g^5\rangle$ generated by the three points $1=g^0$, $g^2$ and $g^5$.
Taking into account of the algebraic rule $g^7=g+1$, one can easily check that for the remaining points of $\pi$, that are
$g^2+1$, $g^5+1$, $g^5+g^2$ and $g^5+g^2+1$, we have:
$$g^2+1=g^{14},\quad g^5+1=g^{54},\quad g^5+g^2=g^{65},\quad g^5+g^2+1=g^{95}.$$
Thus $f(\pi)=\{0,2,5,14,54,65,95\}$. 
The {\it difference table} of $f(\pi)$ is the following:

\medskip\small
\begin{center}
\renewcommand\arraystretch{1.0} 
\begin{tabular}{|c|c|c|c|c|c|c|c|c|}
\hline $$ & ${0}$ & ${2}$ & ${5}$ & ${14}$ & ${54}$ & ${65}$ & ${95}$\\
\hline $0$ & $$ & ${\bf125}$  & ${\bf122}$ & $\bf113$ & ${\bf73}$ & ${\bf62}$ & ${\bf32}$ \\
\hline $2$ & $\bf2$ & $$  & $\bf124$ & $\bf115$ & $\bf75$ & $\bf64$ & $\bf34$ \\
\hline $5$ & $\bf5$ & $\bf3$ & $ $ & $\bf118$ & $\bf78$ & $\bf67$ & $\bf37$ \\
\hline $14$ & $\bf14$ & $\bf12$ & $\bf9$ & $ $ & $\bf87$ & $\bf76$ & $\bf46$  \\
\hline $54$ & $\bf54$ & $\bf52$ & $\bf49$ & $\bf40$ & $$ & $\bf116$ & $\bf86$  \\
\hline $65$ & $\bf65$ & $\bf63$ & $\bf60$ & $\bf51$ & $\bf11$ & $$ & $\bf97$  \\
\hline $95$ & $\bf95$ & $\bf93$ & $\bf90$ & $\bf81$ & $\bf41$ & $\bf30$ & $$  \\
\hline
\end{tabular} 
\end{center}
\normalsize
Choosing $r=3$ as a primitive element of $\Z_p$, one can see that
the image of the above table under the map $Log$ is
\begin{center}
\renewcommand\arraystretch{1.0} 
\begin{tabular}{|c|c|c|c|c|c|c|c|c|}
\hline $$ & ${0}$ & ${2}$ & ${5}$ & ${14}$ & ${54}$ & ${65}$ & ${95}$\\
\hline $0$ & $$ & ${\bf9}$  & ${\bf6}$ & $\bf16$ & ${\bf12}$ & ${\bf10}$ & ${\bf0}$ \\
\hline $2$ & $\bf0$ & $$  & $\bf10$ & $\bf10$ & $\bf13$ & $\bf0$ & $\bf2$ \\
\hline $5$ & $\bf15$ & $\bf1$ & $ $ & $\bf11$ & $\bf5$ & $\bf7$ & $\bf8$ \\
\hline $14$ & $\bf7$ & $\bf1$ & $\bf2$ & $ $ & $\bf6$ & $\bf12$ & $\bf13$  \\
\hline $54$ & $\bf3$ & $\bf4$ & $\bf14$ & $\bf15$ & $$ & $\bf5$ & $\bf17$  \\
\hline $65$ & $\bf1$ & $\bf9$ & $\bf16$ & $\bf3$ & $\bf14$ & $$ & $\bf7$  \\
\hline $95$ & $\bf9$ & $\bf11$ & $\bf17$ & $\bf4$ & $\bf8$ & $\bf16$ & $$  \\
\hline
\end{tabular} 
\end{center}
\normalsize
and then that condition (i) is satisfied; indeed each element of $\Z_{18}$ appears at least once in the
entries of the above table. Now we have to label the vertices of the abstract graph $Q_3^*$ with the points of $f(\pi)$ 
in order to get a graph $B'$ satisfying (ii). We claim that such a graph $B'$ is for instance the following.

\begin{center}
	\begin{tikzpicture}[-,auto,node distance=2.5cm,thick,main node/.style={circle,fill=black,draw}]
	\node[circle,fill,scale=0.5](1) {};
	\node[circle,fill,scale=0.5, above right of=1](2) {};
	\node[circle,fill,scale=0.5, left of=1](3) {};
	\node[circle,fill,scale=0.5, above right of=3](4) {};
	\node[circle,fill,scale=0.5, below of=1](5) {};
	\node[circle,fill,scale=0.5, below of=3](6) {};
	\node[circle,fill,scale=0.5, below of=2](7) {};
	
	\path
	(1) edge node {} (2)
	(3) edge node {} (1)
	(4) edge node {} (2)
	(3) edge node {} (4)
	(3) edge node {} (6)
	(2) edge node {} (7)
	(1) edge node {} (5)
	(6) edge node {} (5)
	(5) edge node {} (7);
	
	\node [above left] at (1) {14};
	\node [above right] at (2) {5};
	\node [above left] at (3) {2};
	\node [above] at (4) {0};
	\node [below right] at (5) {65};
	\node [below left] at (6) {54};
	\node [below right] at (7) {95};
	\end{tikzpicture}
\end{center}

This is clearly recognizable looking at the image under $Log$ of the difference table of $B'$, that is the following.
\begin{center}
\renewcommand\arraystretch{1.0} 
\begin{tabular}{|c|c|c|c|c|c|c|c|c|}
\hline $$ & ${0}$ & ${2}$ & ${5}$ & ${14}$ & ${54}$ & ${65}$ & ${95}$\\
\hline $0$ & $$ & ${\bf9}$  & ${\bf6}$ & $$ & $$ & $$ & $$ \\
\hline $2$ & $\bf0$ & $$  & $$ & $\bf10$ & $\bf13$ & $$ & $$ \\
\hline $5$ & $\bf15$ & $$ & $ $ & $\bf11$ & $$ & $$ & $\bf8$ \\
\hline $14$ & $$ & $\bf1$ & $\bf2$ & $ $ & $$ & $\bf12$ & $$  \\
\hline $54$ & $$ & $\bf4$ & $$ & $$ & $$ & $\bf5$ & $$  \\
\hline $65$ & $$ & $$ & $$ & $\bf3$ & $\bf14$ & $$ & $\bf7$  \\
\hline $95$ & $$ & $$ & $\bf17$ & $$ & $$ & $\bf16$ & $$  \\
\hline
\end{tabular} 
\end{center}

\section{Steiner 2-designs over finite fields}

A 2$-(v,k,\lambda)$ design is said to be a {\it Steiner $2$-design} when $\lambda=1$.
In this section we will discuss Steiner 2-designs over $\F_q$, namely 2$-(v,k,1)_q$ designs or also 2$-(v,\K_k,1)_q$ designs.
It has already bee observed that a  2$-(v,3,1)_q$ design possibly exists only if $v\equiv1$ or 3 (mod 6) (see, e.g., \cite{BKN16}).
As far as we are aware, nobody noticed that, much more generally, the trivial admissibility conditions for the 
existence of a 2$-(v,k,1)_q$ design can be stated in the following very convenient and simple way. 

\begin{thm}\label{steineradmissibility1}
A $2$$-(v,k,1)_q$ design exists only if $v\equiv1$ or $k$ $($mod $k(k-1))$.
\end{thm}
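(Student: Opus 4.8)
The plan is to translate the statement into a divisibility condition among integers of the form $[m]_q=\frac{q^m-1}{q-1}$ and then read off a congruence modulo $k(k-1)$. By Proposition \ref{admissible} applied to $\Gamma=\K_k$ (which is $q$-spaceable and regular of degree $k-1$, with $\binom{k}{2}$ edges), a $2$-$(v,k,1)_q$ design can exist only if two conditions hold: first, $\binom{k}{2}$ divides $\frac{1}{2}q[v]_q[v-1]_q=\binom{[v]_q}{2}$, i.e. $k(k-1)\mid [v]_q([v]_q-1)$; and second, $k-1$ divides $q[v-1]_q=[v]_q-1$. The second condition, $[v]_q\equiv 1\pmod{k-1}$, is exactly the statement that the number of points of PG$(\F_q^v)$ is congruent to the number of points of a $(k-1)$-dimensional subspace, which is the usual replication-number divisibility. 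I would record both of these and then combine them.

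The key arithmetic step is the standard identity $[v]_q\equiv v\pmod{k-1}$ whenever $q\equiv 1\pmod{k-1}$, and more usefully the fact that modulo $k-1$ one has $[v]_q=\sum_{i=0}^{v-1}q^i$, so the behaviour of $[v]_q$ only depends on $q\bmod (k-1)$ and $v$. Actually the cleanest route: from $k-1\mid [v]_q-1$ we get $[k]_q\equiv 1\pmod{k-1}$ as well (since $[k]_q-1=q[k-1]_q$ and more directly $[k]_q=\sum_{i=0}^{k-1}q^i\equiv k\equiv 1\pmod{k-1}$ only if $q\equiv1$, so one must argue via the hypothesis). The point I want to extract is that $[v]_q\equiv [k]_q$ or $[v]_q\equiv [1]_q=1$ modulo $[k]_q-[1]_q$-type quantities — but the theorem is stated with the \emph{classical} parameters $k(k-1)$, not $q$-analog parameters, so the real content is: the two trivial divisibility conditions on $[v]_q$ are together equivalent to $v\equiv 1$ or $k\pmod{k(k-1)}$. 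So after establishing $k(k-1)\mid[v]_q([v]_q-1)$ and $(k-1)\mid([v]_q-1)$, I would set $n=[v]_q$ and show these force $v$ itself into the claimed residue classes, using that $[v]_q\bmod k$ and $[v]_q\bmod(k-1)$ are periodic functions of $v$. Concretely: $(k-1)\mid [v]_q-1$ means $q^{v}\equiv q\pmod{(k-1)(q-1)}$ roughly, giving a constraint that pins $q\pmod{k-1}$ compatible with $v$; then the factor-$k$ part of $k(k-1)\mid n(n-1)$ gives $k\mid[v]_q$ or $k\mid[v]_q-1$.

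The main obstacle I anticipate is handling the prime factorization of $k(k-1)$ cleanly, because $k$ and $k-1$ are coprime but individually composite, so one cannot simply say "$k\mid n$ or $k\mid n-1$" — one must do it prime-power by prime-power (for each prime power $p^a\|k(k-1)$, either $p^a\mid n$ or $p^a\mid n-1$ since $\gcd(n,n-1)=1$) and then reassemble via CRT, and separately show the $(k-1)$-divisibility already forces the $n-1$ alternative on every prime power dividing $k-1$. That reduces everything to: for a prime power $p^a$ dividing $k$, decide whether $p^a\mid[v]_q$ or $p^a\mid[v]_q-1$, and show the first happens iff $v\equiv k\pmod{k(k-1)}$ in the relevant coordinate and the second iff $v\equiv 1$. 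This is a finite but slightly fiddly lifting-the-exponent / periodicity computation; I would isolate it as a lemma about $[v]_q\bmod m$ and then the theorem follows by assembling the local conditions with the Chinese Remainder Theorem, noting that $\gcd(k,k-1)=1$ makes the two congruence classes $v\equiv 1$ and $v\equiv k$ the only global solutions.
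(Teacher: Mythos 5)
There is a genuine gap, and it occurs at the very first step. For a $2$-$(v,k,1)_q$ design the blocks are $(k-1)$-dimensional \emph{subspaces}, so viewed as a graph decomposition the block graph is $\K_{[k]_q}$, not $\K_k$. The trivial necessary conditions therefore read $[k]_q-1\mid [v]_q-1$ and $[k]_q([k]_q-1)\mid [v]_q([v]_q-1)$, which (using $[k]_q-1=q[k-1]_q$) reduce to $[k-1]_q\mid[v-1]_q$ and $[k]_q[k-1]_q\mid[v]_q[v-1]_q$. The conditions you wrote down, namely $(k-1)\mid[v]_q-1$ and $k(k-1)\mid[v]_q([v]_q-1)$, are strictly weaker and cannot yield the theorem: for $k=3$, $q=2$ both of your conditions hold for \emph{every} $v$ (since $2^v-2$ is always even and $3$ always divides $(2^v-1)(2^v-2)$), whereas the theorem asserts $v\equiv1$ or $3\pmod 6$. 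So no amount of CRT or lifting-the-exponent applied to your premises can close the argument.

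Even with the correct premises, your plan is missing the key lemma that drives the paper's proof: $\gcd([m]_q,[n]_q)=[\gcd(m,n)]_q$, whence $[m]_q\mid[n]_q$ if and only if $m\mid n$. This immediately converts $[k-1]_q\mid[v-1]_q$ into $k-1\mid v-1$. For the second condition, since $[v]_q$ and $[v-1]_q$ are coprime (again by the gcd lemma), one gets $[k]_q=[g]_q[g']_q$ with $g=\gcd(k,v)$, $g'=\gcd(k,v-1)$, i.e.\ $(q-1)(q^k-1)=(q^g-1)(q^{g'}-1)$; reducing modulo $q^{\min(g,g')}$ forces $\min(g,g')=1$, hence $k\mid v$ or $k\mid v-1$. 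Combining with $k-1\mid v-1$ gives the two residue classes. Your instinct to work prime-power by prime-power on $k(k-1)$ bypasses this mechanism entirely and, as the $k=3$, $q=2$ example shows, operates on conditions that carry no information; the ``fiddly periodicity computation'' you defer is in fact the entire content of the theorem, and it is resolved not by CRT but by the $q$-analog gcd identity.
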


This is as an immediate consequence of the following more general fact.

\begin{prop}\label{steineradmissibility2}
A classic $2-({q^v-1\over q-1},{q^k-1\over q-1},1)$ design exists only if $v\equiv1$ or $k$ $($mod $k(k-1))$.
\end{prop}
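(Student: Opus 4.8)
The plan is to use the two standard divisibility (necessary) conditions for a classic $2$-$(V,K,1)$ design, namely that $K-1$ divides $V-1$ and that $K(K-1)$ divides $V(V-1)$, and to feed in $V=[v]_q$ and $K=[k]_q$. The first condition will pin down $v$ modulo $(k-1)$, and then the second, combined with the first, will sharpen this to $v$ modulo $k(k-1)$. Throughout I would write $[v]_q=\frac{q^v-1}{q-1}$ and freely use the elementary identity $[v]_q\equiv v\pmod{q-1}$ together with factorizations of $q^a-1$.

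First I would record the congruence $[v]_q - 1 = q\,[v-1]_q$ and, more importantly, that $q^{k-1}-1 \mid q^{v-1}-1$ iff $(k-1)\mid(v-1)$; since $[k]_q-1 = q\,[k-1]_q = q\cdot\frac{q^{k-1}-1}{q-1}$ and $[v]_q-1 = q\cdot\frac{q^{v-1}-1}{q-1}$, the replication divisibility $([k]_q-1)\mid([v]_q-1)$ is equivalent to $\frac{q^{k-1}-1}{q-1}\bigm|\frac{q^{v-1}-1}{q-1}$, which (using $\gcd$ properties of $q^a-1$) holds precisely when $(k-1)\mid(v-1)$. So write $v-1 = (k-1)m$ for some integer $m\ge 0$; the goal is then to show $m\equiv 0$ or $1\pmod k$, which translates exactly into $v\equiv 1$ or $k\pmod{k(k-1)}$.

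Next I would bring in the block-count divisibility: $b = \frac{[v]_q([v]_q-1)}{[k]_q([k]_q-1)}$ must be an integer, equivalently $[k]_q([k]_q-1)\mid [v]_q([v]_q-1)$. Using $v-1=(k-1)m$ I would compute $[v]_q$ modulo $[k]_q$ by writing $q^v - 1 = q(q^{v-1}-1) + (q-1) = q\big((q^{k-1})^m - 1\big) + (q-1)$ and noting $q^{k-1}\equiv 1 - (q-1)[k-1]_q \cdot 0 \ldots$ — more cleanly, reduce $q^{v-1} = (q^{k-1})^m$ modulo $[k]_q(q-1) = q^k-1$; since $q^k \equiv 1 \pmod{q^k-1}$ and $q^{k-1}\equiv q^{-1}$, one gets a manageable expression. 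The cleanest route: show $[v]_q \equiv [r]_q \pmod{[k]_q}$ where $r$ is determined by $v$ modulo a suitable period, and then the integrality of $b$ forces $[k]_q \mid [v]_q\big([v]_q - 1\big)$, and since $\gcd([k]_q,[k]_q-1)=1$ with the replication condition already handling the $[k]_q-1$ factor, one extracts a further constraint that reduces $m\bmod k$ to $\{0,1\}$. I would organize this as: (a) the $(k-1)\mid(v-1)$ step; (b) a lemma computing $[v]_q \bmod [k]_q$ in terms of $m\bmod k$; (c) combining to conclude.

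**The main obstacle** I anticipate is step (b): getting a clean closed form for $[v]_q$ modulo $[k]_q$ (not just modulo $q-1$) when $v-1=(k-1)m$. One must be careful that $q^{k-1}$ is not $\equiv 1$ modulo $[k]_q$ (only modulo $q-1$), so the reduction of $(q^{k-1})^m$ is genuinely $m$-dependent. I expect the right identity is something like $[(k-1)m+1]_q \equiv m\,q^{?}\cdot(\text{something}) \pmod{[k]_q}$, obtained by telescoping $\sum_{i=0}^{v-1} q^i$ into blocks of length $k-1$; working this out and then checking that $[k]_q \mid [v]_q([v]_q-1)$ collapses to ``$m(m-1)\equiv 0$ in a cyclic group of order $k$,'' hence $k\mid m(m-1)$, is the crux. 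Everything else is routine manipulation of $q$-integers and gcd facts about $q^a-1$, which I would not spell out in full.
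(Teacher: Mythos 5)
Your first step coincides with the paper's: the replication condition $([k]_q-1)\mid([v]_q-1)$ reduces to $[k-1]_q\mid[v-1]_q$, which via $\gcd(q^a-1,q^b-1)=q^{\gcd(a,b)}-1$ gives $(k-1)\mid(v-1)$. The gap is in your second step. Writing $v-1=(k-1)m$, you correctly state that the target is $m\equiv0$ or $1\pmod{k}$ (equivalently, $k\mid v-1$ or $k\mid v$), but you then predict that the block-count divisibility ``collapses to $k\mid m(m-1)$.'' These are not the same statement: $k\mid m(m-1)$ is exactly the classic condition $k(k-1)\mid v(v-1)$ and is strictly weaker than ``$m\equiv0$ or $1\pmod{k}$'' whenever $k$ is not a prime power. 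For $k=6$ it admits $m\equiv3,4\pmod{6}$, i.e.\ $v\equiv16,21\pmod{30}$ --- precisely the residues the proposition is designed to exclude (the paper makes this comparison explicitly right after the proof). So even if your mod-$[k]_q$ computation goes through, arriving at $k\mid m(m-1)$ proves nothing beyond the classic necessary conditions and does not establish the statement.

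The missing idea is the one the paper's proof turns on. Since $\gcd([v]_q,[v-1]_q)=[1]_q=1$, the divisibility $[k]_q\mid[v]_q[v-1]_q$ forces the factorization $[k]_q=\gcd([k]_q,[v]_q)\cdot\gcd([k]_q,[v-1]_q)=[g]_q\,[g']_q$ with $g=\gcd(k,v)$ and $g'=\gcd(k,v-1)$; rewriting this as $(q-1)(q^k-1)=(q^g-1)(q^{g'}-1)$ and reducing modulo $q^{\min(g,g')}$ yields $q\equiv0\pmod{q^{\min(g,g')}}$, hence $\min(g,g')=1$ and the other gcd equals $k$, giving $k\mid v$ or $k\mid v-1$ outright. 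This is exactly where the $q$-analog condition is genuinely stronger than the integer one: $[k]_q$ cannot split as a product of two nontrivial coprime $q$-integer factors. Your route could in principle be repaired --- reduce $[v]_q$ modulo $[k]_q$ to $[r]_q$ with $r=v\bmod k$ and show $[k]_q\nmid[r]_q[r-1]_q$ for $2\le r\le k-1$ by a gcd/size estimate --- but that repair is essentially the coprime-factorization argument in disguise, and as written your proposal stops short of it.
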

\begin{proof}
First recall that $\gcd(a^m-1,a^n-1)=a^{\gcd{(m,n)}}-1$ and therefore that
$\gcd({a^m-1\over a-1},{a^n-1\over a-1})={a^{\gcd(m,n)}-1\over a-1}$
for every triple of positive integers $(a,m,n)$ with $a>1$. 
This is a standard exercise of elementary number theory (see, e.g., \cite{Santos},
Example 245, page 36).
Specializing this to the case
that $a$ is a prime power, we can say that 
\begin{equation}\label{gcd}\gcd([m]_q,[n]_q)=[\gcd(m,n)]_q\end{equation} 
for every prime power $q$ and every pair $(m,n)$ of positive integers. This fact
implies, in particular, that 
\begin{equation}\label{[m]_q divides [n]_q}
[m]_q \ {\rm divides} \ [n]_q \Longleftrightarrow m  \ {\rm divides} \ n
\end{equation}

Indeed $[m]_q$ is a divisor of $[n]_q$ if and only if $\gcd([m]_q,[n]_q)=[m]_q$ which, by (\ref{gcd}), is equivalent to
say that $[\gcd(m,n)]_q=[m]_q$. Hence $[m]_q$ is a divisor of $[n]_q$ iff $\gcd(m,n)=m$, i.e., iff $m$ is a divisor of $n$.

Now assume that a 2$-([v]_q,[k]_q,1)$ design exists.
Here the divisibility conditions give ${[v]_q-1\over [k]_q-1}\in\N$ and ${[v]_q([v]_q-1)\over [k]_q([k]_q-1)}\in\N$ which, by trivial computation, mean
\begin{equation}\label{admissibleS_q}
{[v-1]_q\over [k-1]_q}\in \N \quad\quad {\rm and} \quad\quad {[v]_q[v-1]_q\over [k]_q[k-1]_q}\in \N
\end{equation}

By (\ref{[m]_q divides [n]_q}) and the first condition above, $k-1$ must be a divisor of $v-1$, i.e., $v\equiv1$ (mod $k-1$).

By (\ref{gcd}) we have $\gcd([v]_q,[v-1]_q)=[\gcd(v,v-1)]_q=[1]_q=1$, i.e., $[v]_q$ and $[v-1]_q$ are relatively prime.
Thus, given that $[k]_q$ is a divisor of their product by the second condition in (\ref{admissibleS_q}), $[k]_q$ is the
product of $\gcd([k]_q,[v]_q)$ and $\gcd([k]_q,[v-1]_q)$. It follows, by (\ref{gcd}), that $[k]_q=[g]_q\cdot[g']_q$ where
$g=\gcd(k,v)$ and $g'=\gcd(k,v-1)$.
Thus we can write:
\begin{equation}\label{brixen}
(q-1)(q^k-1)=(q^g-1)(q^{g'}-1).
\end{equation}
If $g\leq g'$, reducing (\ref{brixen}) modulo $q^g$ we get $q\equiv0$ (mod $q^g$) which implies $g=1$ and hence $g'=k$, i.e., $k$ divides $v-1$.
If $g\geq g'$, reducing (\ref{brixen}) modulo $q^{g'}$ we get $q\equiv0$ (mod $q^{g'}$) which implies $g'=1$ and hence $g=k$, i.e., $k$ divides $v$.
Thus we have $v\equiv0$ or 1 (mod $k$). 
Recalling that $v\equiv1$ (mod $k-1$), we necessarily conclude that $v\equiv1$ or $k$ $($mod $k(k-1))$
and the assertion follows.
\end{proof}

The following result is elementary and can be considered folklore.
Already in 1987, R. Mathon \cite{M} introduced it (end of page 353), without a proof, saying 
{\it ``An orbit analysis of a cyclic Steiner $C(v,k,1)$ yields the
following existence condition ..."}.
\begin{prop}\label{Mathon}
A cyclic $2$$-(v,k,1)$ design may exist only for $v\equiv 1$ or $k$ $($mod $k(k-1))$.
The block set of such a design is the development of a $(v,k,1)$ difference family when $v\equiv 1$ $($mod $k(k-1))$ or the development
of a $(v,k,k,1)$ difference family plus the cosets of ${v\over k}\Z_v$ in $\Z_v$ when $v\equiv k$ $($mod $k(k-1))$.
\end{prop}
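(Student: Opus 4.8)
The first assertion is an immediate specialization of Proposition~\ref{steineradmissibility2}: a cyclic $2$-$(v,k,1)$ design is in particular a classic $2$-$(v,k,1)$ design, and taking $q=1$ (or rather observing that the divisibility conditions $\frac{v-1}{k-1}\in\N$ and $\frac{v(v-1)}{k(k-1)}\in\N$ are exactly the degenerate case of the argument there) forces $v\equiv1$ or $k\pmod{k(k-1)}$. In fact this part needs no cyclicity at all; the cyclic hypothesis is only relevant for the structural description of the block set, which is where the real work lies.

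For the structural statement I would carry out the promised orbit analysis. Let $\mathcal D=(\Z_v,\mathcal B)$ be a cyclic $2$-$(v,k,1)$ design, and let $\Z_v$ act on $\mathcal B$ by translation. For a block $B$, its stabilizer is a subgroup $H\le\Z_v$, so $B$ is a union of cosets of $H$; writing $|H|=h$ and $B=\bigcup_{i}(c_i+H)$, the block $B$ is fixed setwise by $H$, hence $h\mid k$. The key observation is that a short block (one with nontrivial stabilizer $H$, say $|H|=h>1$) must have its edges carry every difference lying in $H^*$ exactly once among the orbit of $B$; in a Steiner system ($\lambda=1$) one checks that this forces $B$ to be exactly a single coset of $H$, i.e.\ $h=k$ and $B=c+\frac vk\Z_v$. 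So every short orbit is the orbit of a coset of $\frac vk\Z_v$, and there is at most one such orbit (the $k$ cosets of $\frac vk\Z_v$ partition $\Z_v$, and each pair of points inside one coset is already covered once, so no other block may join two such points). Counting the differences: the cosets of $\frac vk\Z_v$, taken together, account exactly for the differences lying in $\frac vk\Z_v\setminus\{0\}$; the remaining blocks all have full orbits of length $v$, and picking one representative from each such orbit gives a collection $\mathcal F$ of $k$-subsets of $\Z_v$ whose difference list $\Delta\mathcal F$ covers $\Z_v\setminus\frac vk\Z_v$ exactly once — that is, a $(v,k,k,1)$ difference family relative to $\frac vk\Z_v$. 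Conversely, a short orbit exists iff $k\mid v$, which combined with $v\equiv1\pmod{k-1}$ is exactly the case $v\equiv k\pmod{k(k-1)}$; in the complementary case $v\equiv1\pmod{k(k-1)}$ every orbit is full and $\mathcal F$ is an ordinary $(v,k,1)$ difference family (cf.\ Theorem~\ref{df}).

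I would organize the write-up as: (1) reduce the congruence condition to Proposition~\ref{steineradmissibility2}; (2) classify block stabilizers, showing any nontrivial stabilizer must be $\frac vk\Z_v$ and the block a single coset thereof; (3) show at most one short orbit can occur and identify it with the coset partition; (4) collect representatives of the full orbits and verify the difference-family property by counting differences, using that the coset blocks exhaust precisely $\frac vk\Z_v\setminus\{0\}$.

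\textbf{Main obstacle.} The delicate point is step~(2): ruling out intermediate stabilizers $1<h<k$ with $h\mid k$. The argument must use $\lambda=1$ crucially — with $\lambda>1$ such ``medium'' orbits genuinely occur — so one has to argue that if $B$ is $H$-invariant with $|H|=h$, then the pairs of points of $B$ in the same $H$-coset already force, within the single orbit of $B$, a repeated covering of some difference in $H^*$ unless $B$ is one full coset. Making this counting clean (comparing the multiplicity with which $H^*$-differences arise inside an $H$-invariant $k$-set against the forced multiplicity $1$ in the orbit) is the technical heart of the proof; everything else is bookkeeping.
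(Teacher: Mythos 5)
The paper itself offers no proof of this proposition --- it is stated as folklore and attributed to Mathon's remark about an ``orbit analysis'' --- so your proposal has to be judged on its own merits. The structural half of your argument is the standard one and is essentially sound: the key step of ruling out intermediate stabilizers $1<h<k$ is correctly identified, and the counting you sketch does work (for $d\in H^*$ every ordered pair of $B$ realizing $d$ lies inside a single $H$-coset of $B$, so $d$ occurs with multiplicity $k$ in $\Delta B$, hence the pair $\{x,x+d\}$ lies in $k/h$ distinct blocks of the orbit of $B$, forcing $h=k$ when $\lambda=1$). One small slip: the subgroup $\frac{v}{k}\Z_v$ has order $k$ and therefore $v/k$ cosets, not $k$ of them; the unique short orbit is the set of these $v/k$ cosets.

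The genuine gap is your treatment of the congruence. It is false that ``this part needs no cyclicity at all'': the classic divisibility conditions $\frac{v-1}{k-1}\in\N$ and $\frac{v(v-1)}{k(k-1)}\in\N$ do \emph{not} imply $v\equiv 1$ or $k \pmod{k(k-1)}$. Take $k=6$ and $v\equiv 16\pmod{30}$: both conditions hold, yet $v\not\equiv 1,6\pmod{30}$; the paper makes exactly this point just after the proposition, when it contrasts the classic $2$-$(v,6,1)$ admissible residues $1,6,16,21\pmod{30}$ with the residues $1,6$ forced in the $q$-analog and cyclic settings. Nor can you appeal to Proposition~\ref{steineradmissibility2}, which concerns designs on $[v]_q$ points with blocks of size $[k]_q$ and whose proof rests on the identity $\gcd([m]_q,[n]_q)=[\gcd(m,n)]_q$; there is no meaningful ``$q=1$'' degeneration of that argument. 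The congruence is precisely where cyclicity enters, and your own orbit analysis delivers it: every block orbit has length $v$ or $v/k$, and there is at most one short orbit, so $b=\frac{v(v-1)}{k(k-1)}$ is either a multiple of $v$ (which gives $k(k-1)\mid v-1$) or a multiple of $v$ plus $v/k$ (which gives $k(k-1)\mid v-k$). Reorder your write-up so that the congruence is a corollary of the orbit decomposition rather than an input borrowed from the wrong source, and the proof is complete.
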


By Theorem \ref{steineradmissibility1} and Proposition \ref{Mathon} the necessary conditions for the existence of a 2$-(v,k,1)_q$ design
exactly coincide with the necessary conditions for the existence of a cyclic classic 2$-(v,k,1)$ design. 
Thus, if $k$ is not a prime power, the ``2$-(v,k,1)_q$ admissibility conditions" 
are much more strict than the ``2$-(v,k,1)$ admissibility conditions". Consider, for instance, the case $k=6$.
The admissible values of $v$ for a classic 2$-(v,6,1)$ design are those congruent to 1, 6, 16, or 21 (mod 30).
On the other hand the admissible values of $v$ for a 2$-(v,6,1)_q$ design are only those congruent to 1 or 6 (mod 30).

Now note that $v\equiv 1$ or $k$ (mod $k(k-1)$) implies that $[v]_q\equiv 1$ or $[k]_q$ (mod $[k]_q([k]_q-1)$),
respectively.
Thus, by Theorem \ref{steineradmissibility1} and Proposition \ref{Mathon} again, 
it makes sense to try establishing the existence of a 2$-(v,k,1)_q$ design by searching for a cyclic example.  
Also, improving Theorem 7 in \cite{BEO16}, we can state the following.
\begin{thm}
There exists a cyclic $2$$-(v,k,1)_q$ design if and only if there exists a $(v,r,k,1)_q$ difference family where
$r$ is the remainder of the Euclidean division of $v$ by $k(k-1)$.
\end{thm}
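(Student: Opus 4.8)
The plan is to reduce the statement, via Definition~\ref{design_q}, to a question about \emph{classical} cyclic Steiner $2$-designs and then apply Proposition~\ref{Mathon}. A $2$-$(v,k,1)_q$ design is the same thing as a classical $2$-$([v]_q,[k]_q,1)$ design whose blocks are $(k-1)$-dimensional subspaces of $\PG(\F_q^v)$, and ``cyclic'' means (by Singer's theorem we may take the sharply transitive cyclic group to be the Singer group) that $[\Z_v]_q\cong\Z_{[v]_q}$ acts as a group of automorphisms. So the first step is an arithmetic translation: by Theorem~\ref{steineradmissibility1} we have $v\equiv 1$ or $k\pmod{k(k-1)}$, i.e.\ $r\in\{1,k\}$, and I claim that correspondingly $[v]_q\equiv 1$ or $[k]_q\pmod{[k]_q([k]_q-1)}$. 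Using $[k]_q-1=q[k-1]_q$, $[v]_q-1=q[v-1]_q$, $[v]_q-[k]_q=q^k[v-k]_q$, the divisibility equivalence \eqref{[m]_q divides [n]_q}, and the coprimality of $[k]_q$ and $[k]_q-1$, this is a routine check in each case; note also that the remainder of $[v]_q$ modulo $[k]_q([k]_q-1)$ is then $1$ when $r=1$ and $[k]_q$ when $r=k$, so the hypothesis of Proposition~\ref{Mathon} is met after the substitution $v\mapsto[v]_q$, $k\mapsto[k]_q$.

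For the ``if'' direction: if $r=1$ then a $(v,r,k,1)_q$ difference family is by definition a $(v,k,1)_q$ difference family, and by Theorem~\ref{df_q} its development is a $2$-$(v,k,1)_q$ design, cyclic because $[\Z_v]_q$ acts on it by translation. If $r=k$ then a $(v,r,k,1)_q$ difference family is a $(v,k,\K_k,1)$ difference family over $\F_q$, relative to $[\tfrac vk\Z_v]_q$; by Theorem~\ref{rdf_q} its development is a $(v,k,\K_k,1)$-GDD over $\F_q$ whose classes are the cosets of $[\tfrac vk\Z_v]_q$, that is, the members of the Desarguesian $(k-1)$-spread of $\PG(\F_q^v)$. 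Adjoining these spread members (each a $(k-1)$-subspace, hence a legitimate block --- formally, this is composing the GDD with the trivial $2$-$(k,k,1)_q$ design on each class) to the blocks of the GDD, every pair of points in a common class is covered once by that class and every other pair once by the GDD; the result is a $2$-$(v,k,1)_q$ design, and it is cyclic since $[\Z_v]_q$ permutes the spread members among themselves.

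For the ``only if'' direction, start from a cyclic $2$-$(v,k,1)_q$ design $\mathcal{D}$, which by the above is a classical cyclic $2$-$([v]_q,[k]_q,1)$ design on the Singer group $\Z_{[v]_q}$. Apply Proposition~\ref{Mathon} with $[v]_q,[k]_q$ in place of $v,k$. If $r=1$ the block set of $\mathcal{D}$ is the development of a classical $([v]_q,[k]_q,1)$ difference family $\mathcal{F}$; each base block of $\mathcal{F}$ is a block of $\mathcal{D}$, hence a $(k-1)$-subspace, so $\mathcal{F}$ is precisely a $(v,k,1)_q=(v,r,k,1)_q$ difference family. If $r=k$ the block set of $\mathcal{D}$ is the development of a classical $([v]_q,[k]_q,[k]_q,1)$ relative difference family $\mathcal{F}$ together with the cosets of the \emph{unique} subgroup of order $[k]_q$ of $\Z_{[v]_q}$; realized inside the Singer group this subgroup is $[\tfrac vk\Z_v]_q$, whose cosets form the Desarguesian spread, so $\mathcal{F}$ is relative to $[\tfrac vk\Z_v]_q$. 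Its base blocks are blocks of $\mathcal{D}$ (hence $(k-1)$-subspaces) and, since $\Delta\mathcal{F}$ is disjoint from $[\tfrac vk\Z_v]_q$, they meet each spread member in at most one point; thus $\mathcal{F}$ is a $(v,k,\K_k,1)$ difference family over $\F_q$, i.e.\ a $(v,r,k,1)_q$ difference family.

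The step requiring the most care is this last dictionary between the $q$-analog vocabulary and the concrete objects produced by Mathon's orbit analysis: one must verify that the subgroup of order $[k]_q$ occurring there is exactly the Singer subgroup $[\tfrac vk\Z_v]_q$ whose cosets are the Desarguesian spread (this is where $k\mid v$ when $r=k$ is used), that the base blocks automatically inherit both the subspace property (from being blocks of $\mathcal{D}$) and the transversality required of a relative difference family, and that being ``cyclic over $\F_q$'' can be normalized to an action of the Singer group. The supporting number-theoretic implication $v\equiv r\pmod{k(k-1)}\Rightarrow[v]_q\equiv[r]_q\pmod{[k]_q([k]_q-1)}$ is elementary but must be written out, since Proposition~\ref{Mathon} cannot otherwise be invoked in the needed form.
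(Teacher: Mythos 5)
Your proof is correct and follows exactly the route the paper intends: the paper states this theorem without proof, immediately after recording the two ingredients you use — the implication $v\equiv 1$ or $k \pmod{k(k-1)}\Rightarrow [v]_q\equiv 1$ or $[k]_q \pmod{[k]_q([k]_q-1)}$ and Proposition~\ref{Mathon} — so your write-up is just the intended reduction to Mathon's orbit analysis on the Singer group, carried out in full. The points you flag as needing care (normalizing the cyclic group to the Singer group, identifying the order-$[k]_q$ subgroup with $[\tfrac vk\Z_v]_q$ and its cosets with the Desarguesian spread, and the transversality of the base blocks) are indeed the only places where the $q$-analog dictionary has to be checked, and you check them correctly.
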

A trivial counting shows that the size a difference family $\cal F$ as in the above theorem is
$$|{\cal F}|=\begin{cases}{(q-1)(q^{v-1}-1)\over(q^k-1)(q^{k-1}-1)}\quad\quad\mbox{if $v\equiv 1$ (mod $k(k-1)$)}\medskip\cr
{q^{k-1}(q-1)(q^{v-k}-1)\over(q^k-1)(q^{k-1}-1)} \quad\mbox{if $v\equiv k$ (mod $k(k-1)$)}\end{cases}$$
It is clear that this size is quite ``big" even for very ``small" values of the parameters $v$, $k$ and $q$.
Thus, it would be convenient to use multipliers, when this is possible.
Specializing Theorem \ref{multiplier3} to a 2$-(k(k-1)t+1,k,1)_q$ design (hence $\Gamma=\K_k$ and $\lambda=1$) we get the following.

\begin{thm}\label{multiplier4}
Let $v\equiv1$ $($mod $k(k-1))$ be a prime. Assume that ${[v-1]_q\over[k]_q[k-1]_q}=uv$ for some integer $u$ and that $q\not\equiv1$ $($mod $v)$.
If ${\cal I}$ is a $u$-set of $(k-1)$-dimensional subspaces of PG$(\F_q^v)$ with $\Delta{\cal I}$ evenly distributed over the 
${[v]_q-1\over v}$ orbits of Frob$([\Z_v]_q)$ on $[\Z_v]_q^*$, then ${\cal I}$ is a collection of initial base blocks of a $(v,k,1)_q$ difference family.
\end{thm}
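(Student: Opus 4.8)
The plan is to obtain Theorem~\ref{multiplier4} as a direct specialization of Theorem~\ref{multiplier3}, verifying that each hypothesis of the earlier, more general statement is implied by the hypotheses stated here. First I would fix $\Gamma=\K_k$, so that $\Gamma$ is $q$-spaceable with order $[k]_q$ and size $s=\binom{k}{2}=\frac{k(k-1)}{2}$, and set $\lambda=1$. The arithmetic bookkeeping is the heart of the matter: I must show that the equation $\lambda([v]_q-1)=2st$ of Theorem~\ref{multiplier3} holds for the appropriate value of $t$, and that $v\mid t$, which is exactly what the hypothesis ${[v-1]_q\over[k]_q[k-1]_q}=uv$ is designed to give.

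To carry this out I would compute $t$ explicitly. Since $[v]_q-1=q[v-1]_q$ and $2s=k(k-1)$, the defining relation $[v]_q-1=2st$ forces $t={q[v-1]_q\over k(k-1)}$; one then rewrites $k(k-1)=(q^k-1)(q^{k-1}-1)/(q-1)\cdot\frac{q-1}{\cdots}$ — more cleanly, using $[k]_q[k-1]_q=\frac{(q^k-1)(q^{k-1}-1)}{(q-1)^2}$ and the relation between $k(k-1)$ and this quantity is not literal, so the honest route is: the complete $2$-$([v]_q,[k]_q,1)$ design has block count $\frac{[v]_q([v]_q-1)}{[k]_q([k]_q-1)}$ and replication-type counting shows a $(v,k,1)_q$ difference family has $|{\cal F}|=\frac{(q-1)(q^{v-1}-1)}{(q^k-1)(q^{k-1}-1)}=\frac{[v-1]_q}{[k]_q[k-1]_q}$ base blocks in the case $v\equiv1\pmod{k(k-1)}$, as recorded in the excerpt. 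Thus the number $t$ appearing in Theorem~\ref{multiplier3}, being $|{\cal F}|$ up to the multiplier-orbit reduction, satisfies $t={[v-1]_q\over[k]_q[k-1]_q}$ precisely, so the hypothesis ${[v-1]_q\over[k]_q[k-1]_q}=uv$ says exactly that $v\mid t$ and that ${t\over v}=u$.

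With $v$ prime and $q\not\equiv1\pmod v$, the preceding Proposition guarantees that $Frob([\Z_v]_q)$ acts semiregularly on $[\Z_v]_q^*$, with $\frac{[v]_q-1}{v}$ orbits; this is the group playing the role of $A$ in Theorem~\ref{multiplier3}. Hence a $u$-collection ${\cal I}$ of $(k-1)$-dimensional subspaces of $\PG(\F_q^v)$ (equivalently, $\K_{[k]_q}$-subspaces, i.e. $\Gamma$-subspaces with $\Gamma=\K_k$) whose difference list $\Delta{\cal I}$ is evenly distributed over these $\frac{[v]_q-1}{v}$ orbits is exactly a $\frac{t}{v}$-collection meeting the hypothesis of Theorem~\ref{multiplier3} with $\lambda=1$. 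Theorem~\ref{multiplier3} then yields that ${\cal I}$ is a collection of initial base blocks of a $(v,\K_k,1)$ difference family over $\F_q$, which by our notational convention is a $(v,k,1)_q$ difference family. The main obstacle is purely the number-theoretic identification of $t$ with ${[v-1]_q\over[k]_q[k-1]_q}$: one must be careful that $\binom{k}{2}$ divides $q[v-1]_q$ with the stated quotient, using the identity \eqref{gcd} and the congruence $v\equiv1\pmod{k(k-1)}$ to see that everything lands in $\N$; once that divisibility is pinned down, the rest is a transcription of Theorem~\ref{multiplier3}.
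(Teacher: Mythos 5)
Your proposal is correct and takes exactly the paper's route: the paper gives no separate proof, stating only that Theorem~\ref{multiplier4} is the specialization of Theorem~\ref{multiplier3} to $\Gamma=\K_{[k]_q}$ and $\lambda=1$, which is what you carry out. The one wrinkle is your initial mislabelling of the size of $\Gamma$ as $\binom{k}{2}$ — it is $\binom{[k]_q}{2}=\frac{q[k]_q[k-1]_q}{2}$, which substituted into $[v]_q-1=q[v-1]_q=2st$ yields $t=\frac{[v-1]_q}{[k]_q[k-1]_q}=uv$ immediately, the same identification you eventually reach by the detour through $|{\cal F}|$.
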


\medskip
\subsection{Revisiting the $2$$-(13,3,1)_2$ design}\label{revisiting}

The longstanding conjecture that there is no non-trivial 2$-(v,k,1)_q$ design was disproved in  \cite{BEO16} 
where over 400 non-isomorphic cyclic 2$-(13,3,1)_2$ designs have been constructed. 
Given that they are cyclic, each of them can be obtained by means of a suitable $(13,3,1)_2$ difference family. 
Here we revisit the solution presented in \cite{BEO16} giving some more details. 
Our purpose is to make the reader able
to check its correctness almost by hand and, above all, we want to emphasize how multipliers are 
crucial for its achievement.

Let us take a root $g$ of the polynomial $x^{13} + x^{12} + x^{10} + x^{9} + 1$ as generator of $[\Z_{13}]_2$ and let us
consider the natural isomorphism $f: g^i\in [\Z_{13}]_2 \longrightarrow i\in \Z_{2^{13}-1}$.
Note that $p=[13]_2=2^{13}-1=8191$ is a prime so that it makes sense to speak of a primitive root (mod $p$). 
Such a primitive root is, for instance, $r=17$.

Let us use Theorem \ref{multiplier4} with $v=13$, $k=3$ and $q=2$. We have ${[v-1]_q\over[k]_q[k-1]_q}={2^{12}-1\over7\cdot3}=195=15v$.
Thus the required difference family could be realized by means of a 15-set ${\cal I}$ of  
initial planes of PG$(\F_2^{13})$ with the property that $\Delta{\cal I}$ has exactly one element in each orbit 
of Frob$(\Z_2^{13})$ on $[\Z_{13}]_2^*$. 
Note that the images of these orbits under 
the isomorphism $f$ are the cosets of $\{2^i \ | \ 0\leq i\leq 12\}$, that is the group of 13th 
roots of unity, in $\Z_p^*$. Equivalently, they are the cyclotomic classes of order ${p-1\over13}=630$.
Reasoning as in subsection \ref{Q_3^*} we conclude that ${\cal I}=\{B_1,\dots,B_{15}\}$ is the required set of initial planes
provided that the logarithmic map
$$Log: 17^i\in \Z_p^* \longrightarrow i\in\Z_{630}$$
is bijective on $S:=\bigcup_{i=1}^{15}\Delta f(B_i)$. That said, the required set of initial planes is the one consisting of the
preimages under $f$ of the following subsets of $\Z_p$.
\scriptsize
$$B'_1=\{0, 1, 1249, 7258, 8105, 5040, 7978\},\quad B'_2=\{0, 7, 1857, 6681, 7259, 7381, 7908\},$$
$$B'_3=\{0, 9, 1144, 7714, 1945, 8102, 6771\},\quad B'_4=\{0, 11, 209, 1941, 3565, 6579, 2926\},$$
$$B'_5=\{0, 12, 2181, 3696, 6673, 6965, 2519\},\quad B'_6=\{0, 13, 4821, 8110, 8052, 5178, 7823\},$$
$$B'_7=\{0, 17, 291, 5132, 1199, 8057, 6266\},\quad B'_8=\{0, 20, 1075, 3996, 7313, 4776, 3939\},$$
$$B'_9=\{0, 21, 2900, 6087, 4915, 4226, 8008\},\quad B'_{10}=\{0, 27, 1190, 3572, 6710, 4989, 5199\},$$
$$B'_{11}=\{0, 30, 141, 682, 6256, 6406, 2024\},\quad B'_{12}=\{0, 31, 814, 1243, 4434, 1161, 6254\},$$
$$B'_{13}=\{0, 37, 258, 5396, 6469, 2093, 4703\},\quad B'_{14}=\{0, 115, 949, 1272, 4539, 4873, 1580\},$$
$$B'_{15}=\{0, 119, 490, 6670, 6812, 7312, 5941\}.$$

\normalsize
The reader who wants to check this concretely, has to make the following three steps.

First of all one needs to ensure that the preimages of the $B'_i$s are actually planes. 
To facilitate this task each $B'_i$ has been ordered -- differently from \cite{BEO16} where the order is increasing --
in the form $B'_i=\{0,b_{i1},\dots,b_{i6}\}$ in such a way that 
$$g^{b_{i3}}=g^{b_{i1}}+1,\quad g^{b_{i4}}=g^{b_{i2}}+1,\quad g^{b_{i5}}=g^{b_{i1}}+g^{b_{i2}},\quad g^{b_{i6}}=g^{b_{i1}}+g^{b_{i2}}+1.$$
This can be easily verified by using the identity $g^{13}=g^{12} + g^{10} + g^{9} + 1$. Hence 
$B_i:=f^{-1}(B'_i)$ is the plane $\langle1, g^{b_{i1}}, g^{b_{i2}}\rangle$ generated by the three points $1$, $g^{b_{i1}}$ and $g^{b_{i2}}$.

Then one has to calculate $Log(\Delta B'_i)$ for each $i$. Here is, for instance,
the image under $Log$ of the difference table of $B'_1$. 

\scriptsize\begin{center}
\renewcommand\arraystretch{1.0} 
\begin{tabular}{|c|c|c|c|c|c|c|c|c|}
\hline $$ & ${0}$ & ${1}$ & ${1249}$ & ${7258}$ & ${8105}$ & ${5040}$ & ${7978}$\\
\hline $0$ & $$ & ${\bf315}$  & ${\bf376}$ & $\bf460$ & $\bf343$ & $\bf230$ & $\bf325$ \\
\hline $1$ & $\bf0$ & $$  & $\bf454$ & $\bf547$ & $\bf203$ & $\bf94$ & $\bf540$ \\
\hline $1249$ & $\bf61$ & $\bf139$ & $$ & $\bf265$ & $\bf288$ & $\bf328$ & $\bf344$ \\
\hline $7258$ & $\bf145$ & $\bf232$ & $\bf580$ & $ $ & $\bf478$ & $\bf100$ & $\bf105$  \\
\hline $8105$ & $\bf28$ & $\bf518$ & $\bf603$ & $\bf163$ & $$ & $\bf5$ & $\bf548$  \\
\hline $5040$ & $\bf545$ & $\bf409$ & $\bf13$ & $\bf415$ & $\bf320$ & $$ & $\bf308$  \\
\hline $7978$ & $\bf10$ & $\bf225$ & $\bf29$ & $\bf420$ & $\bf233$ & $\bf623$ & $$  \\
\hline
\end{tabular} 
\end{center}

\normalsize
Finally, one has to check that the ``miracle" happens:
the union of these images cover all $\Z_{630}$.

\medskip
The success of the above construction actually looks like a miracle. It is even more amazing 
that, in the same way, more than 400 pairwise non-isomorphic 2$-$$(13,3,1)_2$ designs have been obtained.
This fact seems to suggest that there is a ``magic" combinatorial structure on $2^{13}-1=8191$ points hidden behind these designs. Thus, given that $8191=90^2+90+1$, we hazard the outrageous conjecture that there exists a projective plane of order 90.

\subsection{Searching for other cyclic $2$$-(v,k,1)_q$ designs}

The existence problem for Steiner 2-designs over $\F_q$ is very hard.
For the time being, the only theoretical tool available to get them is the 
method of differences that requires the construction of a (relative)
difference family whose size is almost always quite big. The possible existence of multipliers 
does not help so much since the number of initial base blocks usually remains too big.
Let us show, for instance, what happens if we want to find the {\it $q$-analog of a Fano plane}
that is a 2$-(7,3,1)_q$ design. In the following table we give the size of a putative 2$-(7,3,1)_q$ difference family $\cal F$
and the minimal size of a set $\cal I$ of initial base blocks for $\cal F$ for each prime power $q\leq19$.

\small\medskip\begin{center}
\renewcommand\arraystretch{1.3} 
\begin{tabular}{|c|c|c|c|c|c|c|c|c|c|c|c|}
\hline $q$ & $2$ & $3$ & $4$ & $5$ & $7$ & $9$ & $11$ & $13$ & $16$ & $17$ & 19\\
\hline $|{\cal F}|\atop{|{\cal I}|}$ & $3$ & $7\atop1$ & $13$ & $21\atop3$ & 43 & 73 & 111 & 157 & 241 & $273\atop39$ & $343\atop49$\\
\hline $\exists$ & N & N & ? & N &  ? & ? & ? & ? &  ? & $?\atop N$ & ? \\
\hline
\end{tabular}
\end{center}

\normalsize
If the cell $({|{\cal F}|\atop{|{\cal I}|}},q)$ contains two numbers $f\atop i$ it means that $f$ is the size of ${\cal F}$ and that
$i$ is the number of initial base blocks for $\cal F$ in the putative case that it admits a group of multipliers.
If the cell $({|{\cal F}|\atop{|{\cal I}|}},q)$ contains only one number $f$ it means that ${\cal F}$ has size $f$
and cannot have a non-trivial group of multipliers.

In the line labeled ``$\exists$" we put ``N" or ``$?$" according to whether the non-existence has been checked by computer
(see \cite{BEO16} for the cases $q\in\{2,3,5\}$)
or it is still undecided, respectively. Note that we have checked by computer that a putative 2$-(7,3,1)_{17}$ difference family 
does not have non-trivial multipliers even though, a priori, it may have Frob$[\Z_7]_{17}$ as a group
of multipliers.

For $(v,k)\neq(7,3)$ the situation becomes even worse.
We first recall that an exhaustive computer search \cite{BEO16} excluded the existence of cyclic $(9,3,1)_2$ and $(13,4,1)_2$ designs.
In the following table we report the size of a putative $(v,r,k,1)_q$ difference family $\cal F$ and the size of a minimal set $\cal I$ of initial 
base blocks for $\cal F$. It is impressive how these numbers almost immediately ``explode". When the cell corresponding to ${\cal I}$ is empty it
will mean that $\cal F$ cannot have non-trivial multipliers.

\scriptsize\medskip\small\noindent
\renewcommand\arraystretch{1.0} 
\begin{tabular}{|c|c|c|c|c|c|c|c|c|}
\hline $v$ & $k$ & $q$ & $|{\cal F}|$ & $|{\cal I}|$\\
\hline $\bf13$ & $\bf3$ & $\bf2$ & $\bf195$ & $\bf15$\\
\hline $15$ & $3$ & $2$ & $780$ & $260$\\
\hline $19$ & $3$ & $2$ & $12483$ & $657$\\
\hline $21$ & $3$ & $2$ & $49932$ & $16644$\\
\hline\hline
\hline $13$ & $3$ & $3$ & $5110$ & $$\\
\hline $19$ & $3$ & $3$ & $3725197$ & $196063$\\
\hline $21$ & $3$ & $3$ & $33526773$ & $111755591$\\
\hline $25$ & $3$ & $3$ & $2715668620$ & $$\\
\hline
\end{tabular}\hfill
\begin{tabular}{|c|c|c|c|c|c|c|c|c|}
\hline $v$ & $k$ & $q$ & $|{\cal F}|$ & $|{\cal I}|$\\
\hline $16$ & $4$ & $2$ & $312$ & $$\\
\hline $25$ & $4$ & $2$ & $159783$ & $$\\
\hline $28$ & $4$ & $2$ & $1278264$ & $319566$\\
\hline $13$ & $4$ & $3$ & $511$ & $$\\
\hline\hline
\hline $16$ & $4$ & $3$ & $13797$ & $$\\
\hline $25$ & $4$ & $3$ & $271566862$ & $$\\
\hline $28$ & $4$ & $3$ & $7332305274$ & $$\\
\hline $37$ & $4$ & $3$ & $>10^{15}$ & $>10^{13}$\\
\hline
\end{tabular} 

\normalsize
\section{Cycle decompositions over finite fields}\label{cyclesection}

The admissibility conditions for the existence of a cycle decomposition over a finite field are the following.
\begin{prop}
A $2-$$(v,C_k,1)_q$-design with $q$ even possibly exists only for $v\equiv0$ or $1$ $($mod $k)$. 
A $2$$-(v,C_k,1)_q$-design with $q$ odd and $k$ even possibly exists only for $v\equiv1$ $($mod $k)$.
A $2$$-(v,C_k,1)_q$-design with both $q$ and $k$ odd possibly exists only for $v\equiv1$ or $k$ $($mod $2k)$.
\end{prop}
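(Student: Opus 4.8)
The plan is to derive all three statements from the general admissibility conditions of Proposition \ref{admissible}, together with the elementary number-theoretic facts already established for Steiner $2$-designs over a finite field. By Notation \ref{q-troubles} a $2$-$(v,C_k,1)_q$ design is a $2$-$(v,C_{[k]_q},1)$ design over $\F_q$, so here $\Gamma=C_{[k]_q}$, the cycle on $[k]_q$ vertices: it is $q$-spaceable, it is $2$-regular, and its size (number of edges) equals $[k]_q$. With $\lambda=1$, condition (ii) of Proposition \ref{admissible} reads $[k]_q \mid \tfrac12 q[v]_q[v-1]_q = \binom{[v]_q}{2}$, while condition (iii), applied to the $2$-regular graph $\Gamma$, reads $2 \mid q[v-1]_q = [v]_q-1$.

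First I would settle parity. Condition (iii) forces $[v]_q$ to be odd. If $q$ is even this holds for every $v$ and imposes nothing; if $q$ is odd then $[v]_q=\sum_{i=0}^{v-1}q^i\equiv v \pmod 2$, so $v$ must be odd. This is exactly what separates the ``$q$ even'' case from the two ``$q$ odd'' cases.

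Next I would unpack condition (ii). Since $\binom{[v]_q}{2}$ divides $[v]_q([v]_q-1)$, we get $[k]_q\mid [v]_q([v]_q-1)$. As $[v]_q$ and $[v]_q-1$ are coprime, a divisor of their product is the product of its greatest common divisors with the two factors, so $[k]_q=\gcd([k]_q,[v]_q)\cdot\gcd([k]_q,[v]_q-1)$. Now $[k]_q\equiv 1\pmod q$, hence $\gcd([k]_q,q)=1$; combined with $[v]_q-1=q[v-1]_q$ this gives $\gcd([k]_q,[v]_q-1)=\gcd([k]_q,[v-1]_q)$. Applying (\ref{gcd}) to both factors, and writing $g=\gcd(k,v)$, $g'=\gcd(k,v-1)$, this becomes $[k]_q=[g]_q[g']_q$, equivalently $(q^k-1)(q-1)=(q^g-1)(q^{g'}-1)$, which is precisely equation (\ref{brixen}). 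The reduction modulo $q^{\min(g,g')}$ already carried out in the proof of Proposition \ref{steineradmissibility2} then forces $\{g,g'\}=\{1,k\}$, i.e. $k\mid v$ or $k\mid v-1$, that is $v\equiv 0$ or $1\pmod k$.

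It remains to combine the two ingredients. For $q$ even this is already the claim. For $q$ odd and $k$ even, the residue $v\equiv 0\pmod k$ would force $v$ even, contradicting the parity conclusion, so only $v\equiv 1\pmod k$ survives. For $q$ and $k$ both odd, intersecting ``$v\equiv 0$ or $1\pmod k$'' with ``$v$ odd'' sharpens the two admissible residues modulo $k$ to $v\equiv k\pmod{2k}$ and $v\equiv 1\pmod{2k}$, respectively. None of this is a serious obstacle; the only point requiring a moment's care is the identity $[k]_q=[\gcd(k,v)]_q\,[\gcd(k,v-1)]_q$, which rests essentially on the coprimality of $[k]_q$ with $q$ and of $[v]_q$ with $[v]_q-1$, the rest being bookkeeping or an appeal to results already proved in the paper.
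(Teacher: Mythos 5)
Your proposal is correct and follows essentially the same route as the paper: extract conditions (ii) and (iii) of Proposition \ref{admissible} for $\Gamma=C_{[k]_q}$, deduce $[k]_q\mid[v]_q[v-1]_q$ and hence $v\equiv0$ or $1\pmod k$ by the gcd/equation~(\ref{brixen}) argument from the proof of Proposition \ref{steineradmissibility2}, then use the parity condition to force $v$ odd when $q$ is odd and combine. The paper compresses the last two steps into ``reasoning as in the proof'' and ``the assertion easily follows''; you have simply filled in those details, correctly.
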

\begin{proof}
Here conditions (iii) and (iv) of Proposition \ref{admissible} are:
\begin{equation}
q[v]_q[v-1]_q\equiv0 \ (mod \ 2[k]_q) \quad{\rm and} \quad q[v-1]_q\equiv0  \ (mod \ 2).
\end{equation}
From the first congruence $[k]_q$ must be a divisor of $[v]_q[v-1]_q$ and then, reasoning as in the 
proof of Proposition \ref{brixen}, we get $v\equiv0$ or $1$ $($mod $k)$. 
If $q$ is odd, the second congruence gives $\displaystyle\sum_{i=0}^{v-2}q^i\equiv0$ (mod 2) 
and then $v-1\equiv0$ (mod 2), i.e., $v$ is odd. The assertion easily follows.
\end{proof}

As a consequence of the above proposition, one can try to construct every putative 2$-(v,C_k,1)_q$ design as follows.

\medskip\noindent
Case  $v\equiv1$ (mod $k$), say $v=kt+1$.

Find a collection ${\cal S}$ of ${1\over2}q[t]_{q^k}$ subspaces of PG$(\F_q^v)$ of dimension $k-1$ whose lists of differences 
cover, all together, every non-identity element of $[\Z_v]_q$ at least once.

Arrange the points of each $S\in{\cal S}$ into a $[k]_q$-cycle $C_S$ in such a way that 
${\cal F}:=\{C_S \ |  \ S\in S\}$ is a $(v,C_k,1)_q$ difference family. The development of $\cal F$ 
is the desired 2$-(v,C_k,1)_q$ design by Theorem \ref{df_q}.

\medskip\noindent
Case  $v\equiv0$ (mod $k$), say $v=kt$.

Find a collection ${\cal S}$ of ${1\over2}q^k[t-1]_{q^k}$ subspaces of PG$(\F_q^v)$ of dimension $k-1$ 
such that $\Delta{\cal S}$ covers every element of $[\Z_{kt}]_q\setminus [t\Z_{kt}]_q$ at least once. 

Arrange the points of each $S\in{\cal S}$ into a $[k]_q$-cycle $C_S$ in such a way that 
${\cal F}:=\{C_S \ |  \ S\in S\}$ is a $(v,k,C_k,1)_q$ difference family. At this point, let us recall that for every odd 
integer $u\geq3$ there exists a Hamiltonian cycle system of order $u$, i.e., a 2$-(u,C_u,1)$ design (see, e.g., \cite{endm}). 
Thus, in particular, there exists a spanning $([k]_q,C_{[k]_q},1)$ design over $\F_q$ and the existence of the desired 2$-(v,C_k,1)_q$ design
follows from Proposition \ref{spanning}.

Let us see how the above strategy is successful to find a $2$-$(v,C_3,1)_2$ design for $v=6, 7$ and $9$.

\medskip
{\bf A cyclic $2$$-(7,C_3,1)_2$ design}.

Let us take a root $g$ of the polynomial $x^7+x+1$ as generator of $[\Z_7]_2$. We need a $(7,C_3,1)_2$ difference family, 
namely a set ${\cal F}$ of nine $C_7$-planes of PG$(\F_2^7)$ whose list of differences covers $[\Z_7]_2\setminus\{1\}$ exactly once. 
We first need a set $\{\pi_1, \dots, \pi_9\}$ of nine planes of PG$(\F_2^7)$ forming a difference cover of $[\Z_7]_2\setminus\{1\}$.
We claim that such a difference cover is the one in which the $i$-th plane
$$\pi_i=\langle1,g^{x_i},g^{y_i}\rangle=\{1,g^{x_i},g^{y_i},g^{x_i}+1,g^{y_i}+1,g^{x_i}+g^{y_i},g^{x_i}+g^{y_i}+1\}$$
is generated by the three points $1$, $g^{x_i}$ and $g^{y_i}$ where
the pairs $(x_1,y_1)$, \dots, $(x_9,y_9)$ are as follows:
$$(1,3),\quad(1,71),\quad(2,18),$$
$$(2,22),\quad(2,41),\quad(3,13),$$
$$(3,20),\quad(8,19),\quad(10,40).$$
Using the identity $g^7=g+1$, the reader can check that the images $f(\pi_1)$, \dots, $f(\pi_9)$ 
of the nine planes in $\Z_{[7]_2}$ are the following:
$$\{0,1,3,7,63,15,31\},\quad\{0,1,71,7,79,92,74\},\quad\{0,2,18,14,53,114,42\},$$
$$\{0,2,22,14,47,91,70\},\quad\{0,2,41,14,75,102,80\},\quad\{0,3,13,63,55,111,96\},$$
$$\{0,3,20,63,89,46,37\},\quad\{0,8,19,56,29,95,65\},\quad\{0,10,40,108,51,72,85\}.$$
Now arrange the points of each $f(\pi_i)$ into a 7-cycle $B_i$ as follows:
$$(0, 1, 3, 7, 15, 31, 63),\quad (0, 7, 1, 71, 74, 79, 92),\quad (0, 18, 42, 14, 2, 114, 53),$$ 
$$ (0, 14, 47, 70, 91, 2, 22),\quad (0, 80, 2, 75, 41, 14, 102),\quad (0, 55, 3, 111, 63, 13, 96),$$
$$(0, 29, 19, 8, 95, 65, 56),\quad (0, 37, 20, 89, 63, 3, 46),\quad (0, 51, 10, 72, 108, 40, 85).$$
The lists of differences $\Delta B_1$, \dots, $\Delta B_9$ of the above cycles are the following:
\small
$$\pm\{1,2,4,8,16,32,63\}\quad\pm\{7,6,57,3,5,13,35\},\quad\pm\{18,24,28,12,15,61,53\}$$
$$\pm\{14,33,23,21,38,20,22\}, \ \pm\{47,49,54,34,27,39,25\}, \ \pm\{55,52,19,48,50,44,31\},$$
$$\pm\{29,10,11,40,30,9,56\}, \ \pm\{37,17,58,26,60,43,46\}, \ \pm\{51,41,62,36,59,45,42\}.$$
\normalsize
We see that the above lists partition $\Z_{127}\setminus\{0\}$, hence $\{B_1,\dots,B_9\}$
is a $(127,C_7,1)$ difference family and then ${\cal F}=\{f^{-1}(B_1),...,f^{-1}(B_9)\}$ is a
$(7,C_3,1)_2$ difference family.

\medskip
{\bf A cyclic $2$$-(6,C_3,1)_2$ design}. 

Let us take a root $g$ of the polynomial $x^6+x^4+x^3+x+1$ as generator of $[\Z_6]_2$ and let $f$
be the natural isomorphism between $[\Z_6]_2$ and $\Z_{[6]_2}$ mapping $g$ to 1. Here we need a $(6,3,C_3,1)_2$ difference family, 
namely a set ${\cal F}$ of four $C_7$-planes of PG$(\F_2^7)$ whose list of differences covers $[\Z_6]_2\setminus[2\Z_6]_2$ exactly once. 
We first need a set $\{\pi_1, \dots, \pi_4\}$ of four planes of PG$(\F_2^6)$ forming a difference cover of $[\Z_6]_2$.
Such a difference cover is the one for which the $i$-th plane
$$\pi_i=\langle1,g^{x_i},g^{y_i}\rangle=\{1,g^{x_i},g^{y_i},g^{x_i}+1,g^{y_i}+1,g^{x_i}+g^{y_i},g^{x_i}+g^{y_i}+1\}$$
is generated by the three points $1$, $g^{x_i}$ and $g^{y_i}$ where
the pairs $(x_1,y_1)$, \dots, $(x_4,y_4)$ are as follows:
$$(1,21),\quad(2,21),\quad(2,9),\quad(9,12).$$
Using the identity $g^6+g^4+g^3+g+1=0$, the reader can check that the images $f(\pi_1)$, \dots, $f(\pi_4)$ 
of the four planes in $\Z_{[6]_2}$ are the following:
$$\{0,1,21,56,42,58,25\},\quad\{0,2,21,49,42,50,53\},$$
$$\{0,2,9,49,27,10,60\},\quad\{0,9,12,27,52,22,46\}.$$
Now arrange the points of each $f(\pi_i)$ into a 7-cycle $B_i$ as follows:
$$(0, 1, 58, 25, 21, 56, 42),\quad (0, 50, 42, 49, 2, 21, 53),$$
$$(0, 2, 27, 10, 49, 9, 60),\quad (0, 22, 27, 12, 46, 9, 52).$$ 
The lists of differences $\Delta B_1$, \dots, $\Delta B_4$ of the above cycles are the following:
$$\pm\{1,6,30,4,28,14,21\},\quad\pm\{13,8,7,16,19,31,10\},$$
$$\pm\{2,25,17,24,23,12,3\},\quad\pm\{22,5,15,29,26,20,11\}.$$
The above lists partition $\Z_{63}\setminus9\Z_{63}$, hence $\{B_1,\dots,B_4\}$
is a $(63,7,C_7,1)$ difference family and then ${\cal F}=\{f^{-1}(B_1),...,f^{-1}(B_4)\}$ is a
$(6,3,C_3,1)_2$ difference family.

\medskip
{\bf A cyclic 2$-(9,C_3,1)_2$ design}. 

Let us take a root $g$ of the polynomial $x^9+x^4+1$ as generator of $[\Z_9]_2$ and let $f$
be the natural isomorphism between $[\Z_9]_2$ and $\Z_{[9]_2}$ mapping $g$ to 1. Here we need a $(9,3,C_3,1)_2$ difference family, 
namely a set ${\cal F}$ of thirty-six $C_7$-planes of PG$(\F_2^9)$ 
whose list of differences covers $[\Z_9]_2\setminus[3\Z_9]_2$ exactly once. 
Note that Frob$([\Z_9]_2)$ acts semiregularly on $[\Z_9]_2\setminus[3\Z_9]_2$.
Thus, by a suitable specialization of Theorem \ref{multiplier2}, the required difference family
can be realized by means of a set ${\cal I}$ of four $C_7$-planes whose 56 differences form a complete system
of representatives for the orbits of Frob$([\Z_9]_2)$ on $[\Z_9]_2\setminus[3\Z_9]_2$.
One can check that such a set $\cal I$ is the one formed by the preimages of the following 
7-cycles of $\Z_{511}$:
$$(0, 60, 1, 470, 130, 11, 504), \quad  (0, 134, 130, 14, 1, 333, 139),$$
$$(0, 24, 130, 1, 294, 338, 474),\quad (0, 27, 130, 277, 1, 185, 142).$$

\section{Path decompositions over finite fields}
The admissibility conditions for the existence of a path decomposition over a finite field are the following.
\begin{prop}\label{path}
A $2$$-(v,P_k,1)_q$ design with $q$ even cannot exist.\break
A $2$$-(v,P_k,1)_q$ design with $q$ odd and $k$ even possibly exists only for $v\equiv0$ or $1$ $($mod $k-1)$. 
A $2$$-(v,P_k,1)_q$ design with $q$ odd and $k$ odd possibly exists only for $v\equiv0$ or $1$ $($mod $2(k-1))$. 
\end{prop}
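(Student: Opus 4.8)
The plan is to imitate the proof of the cycle-decomposition admissibility criterion, invoking the general necessary conditions of Proposition~\ref{admissible} for the specific graph $\Gamma = P_{[k]_q}$, the path on $[k]_q$ vertices. First I would record the two relevant data: the path $P_{[k]_q}$ has size $s = [k]_q - 1 = q[k-1]_q$, and the greatest common divisor of its vertex degrees is $\gcd(1,2) = 1$, since a path on at least three vertices has both endpoints of degree $1$. Consequently condition (iii) of Proposition~\ref{admissible} is vacuous, and the only constraint is condition (ii): the size $q[k-1]_q$ must divide ${1\over2}q[v]_q[v-1]_q$, i.e.
\begin{equation}
q[k-1]_q \ \Big| \ {\textstyle{1\over2}}\,q\,[v]_q\,[v-1]_q .
\end{equation}

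Next I would split according to the parity of $q$. If $q$ is even, then $[v]_q = 1 + q + \cdots + q^{v-1}$ is a sum of one odd term and $v-1$ even terms, hence $[v]_q$ is odd; likewise $[v-1]_q$ is odd and $[k-1]_q$ is odd. Then $q[v]_q[v-1]_q / (2 q[k-1]_q) = [v]_q[v-1]_q/(2[k-1]_q)$, whose denominator carries a factor $2$ that the odd numerator $[v]_q[v-1]_q$ cannot absorb; so the divisibility fails and no $2$-$(v,P_k,1)_q$ design exists. If $q$ is odd, the factor $q$ cancels and the constraint becomes $2[k-1]_q \mid [v]_q[v-1]_q$. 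Now use $\gcd([v]_q,[v-1]_q) = [\gcd(v,v-1)]_q = 1$ from~(\ref{gcd}), together with the observation that modulo $2$ one has $[n]_q \equiv n$ since $q$ is odd. Thus exactly one of $[v]_q, [v-1]_q$ is even (the one with even index), and $2$ divides $[v]_q[v-1]_q$ exactly once when $v$ is even and... here one must be a little careful and treat the odd factor $[k-1]_q$ separately from the single factor of $2$.

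So I would factor the requirement $2[k-1]_q \mid [v]_q[v-1]_q$ into: (a) $[k-1]_q \mid [v]_q[v-1]_q$, and (b) an additional factor of $2$ is available. For~(a), since $[k-1]_q$ is odd and $\gcd([v]_q,[v-1]_q)=1$, the same argument as in Proposition~\ref{steineradmissibility2}/the cycle proposition forces $[k-1]_q$ to be a product $[g]_q[g']_q$ with $g = \gcd(k-1,v)$, $g' = \gcd(k-1,v-1)$, and the modular reduction argument there yields $v \equiv 0$ or $1 \pmod{k-1}$. For~(b), when $k$ is even $[k-1]_q \equiv k-1 \equiv 1 \pmod 2$ contributes no factor of $2$, so we only need $2 \mid [v]_q[v-1]_q$, which always holds (one of $v,v-1$ is even); hence for $q$ odd and $k$ even the condition is just $v \equiv 0$ or $1 \pmod{k-1}$. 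When $k$ is odd, $[k-1]_q \equiv k-1 \equiv 0 \pmod 2$, so $[k-1]_q$ already uses up the one available factor of $2$ in $[v]_q[v-1]_q$; to have $2[k-1]_q$ divide the product we need $[v]_q[v-1]_q$ to be divisible by $2[k-1]_q$ rather than just $[k-1]_q$, which tightens $v \equiv 0$ or $1 \pmod{k-1}$ to $v \equiv 0$ or $1 \pmod{2(k-1)}$; the cleanest way to see this is to run the $\gcd$ argument over the ring structure and track the exponent of $2$, exactly parallel to the ``both $q$ and $k$ odd'' case of the cycle proposition.

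The main obstacle I expect is step~(b): making the jump from ``$[k-1]_q$ divides the product'' to the precise congruence modulo $2(k-1)$ when $k$ is odd, i.e. showing that the extra factor of $2$ in $2[k-1]_q$ forces $v$ or $v-1$ to be divisible by $2(k-1)$ and not merely by $k-1$ while picking up the stray $2$ elsewhere. This requires a careful $2$-adic valuation bookkeeping: one writes $[v-1]_q$ (say) as $[k-1]_q \cdot \bigl([v-1]_q/[k-1]_q\bigr)$ and must argue the cofactor is even, which by $[n]_q \equiv n \pmod 2$ and a lifting-the-exponent style computation reduces to $(v-1)/(k-1)$ being even; handling the symmetric case and the $v\equiv 0$ branch similarly completes the argument. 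Everything else is routine arithmetic already modeled on Propositions~\ref{admissible} and~\ref{steineradmissibility2}.
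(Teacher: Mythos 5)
Your proposal is correct and takes essentially the same route as the paper's proof: the size condition of Proposition~\ref{admissible} gives $2[k-1]_q \mid [v]_q[v-1]_q$, parity rules out $q$ even, the coprimality argument of Proposition~\ref{steineradmissibility2} yields $v\equiv 0$ or $1 \pmod{k-1}$, and reducing the cofactor $\sum_{i=0}^{t-1}q^{(k-1)i}\equiv t \pmod 2$ forces $t$ even when $k$ is odd, which is exactly the paper's final computation. The only blemish is the passing claim that $[k-1]_q$ is odd (false when $k$ is odd) in your step~(a), but that hypothesis is never actually used there, so the argument stands.
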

\begin{proof}
A path with $n$ vertices has size $n-1$, hence the size of $[P_k]_q$ is $[k]_q-1=q[k-1]_q$.
Thus, if a $2$-$(v,P_k,1)_q$ design exists, condition (iii) of Proposition \ref{admissible} gives $q[v]_q[v-1]_q\equiv0$ (mod $2q[k-1]_q$), hence
$2[k-1]_q$ must be a divisor of $[v]_q[v-1]_q$.
It is obvious that this is not possible for $q$ even since in this case both $[v]_q$ and $[v-1]_q$ are odd.
Thus $q$ must be odd and $[k-1]_q$ must be a divisor of $[v]_q[v-1]_q$. Reasoning as in the 
proof of Proposition \ref{brixen}, we get $v\equiv0$ or $1$ $($mod $k-1)$. Thus we have $v=(k-1)t+r$ with $r=0$ or 1 for a suitable $t$
and a trivial counting shows that we have: $${[v]_q[v-1]_q\over 2[k-1]_q}={1\over2}\cdot\sum_{i=0}^{t-1}q^{(k-1)i}\cdot\sum_{i=0}^{(k-1)t-2r}q^i$$
The reduction (mod 2) of the two sums in the above formula are respectively equal to $t$ and $(k-1)t+1$. Thus, for $k$ odd, their product is
even only for $t$ even. The assertion follows.
\end{proof}

Differently from Steiner 2-designs and cycle decompositions over finite fields, we note that there are admissible triples $(v,k,q)$ for which, 
a priori, a 2$-(v,P_k,1)_q$ design cannot be obtained via difference families. The first of these triples is $(4,3,3)$; according to
Proposition \ref{path} a 2$-(4,P_3,1)_3$ design may exist but it does not make sense to speak of a $(4,P_3,1)_3$ difference family.

The ``smallest" admissible non-trivial triple $(v,k,q)$ for which a $(v,P_k,1)_q$ difference family may exist is $(5,3,3)$.
Thus let us construct a $(5,P_3,1)_3$ difference family, i.e., a set $\cal F$ of $P_{13}$-planes of PG$(3^5)$ 
whose list of differences covers $[\Z_5]_3^*$ exactly once. The size of $P_{13}$ is $s=12$ and 
we have $[5]_3-1=120=2st$ with $t=5$. Thus, by Theorem \ref{multiplier1}, a possible way to realize the difference family $\cal F$
is to look for only one $P_{13}$-plane of PG$(3^5)$ whose list of differences is a complete
system of representatives for the orbits of Frob$([\Z_5]_3)$ on $[\Z_5]_3^*$.

Let us take a root $g$ of the polynomial $x^5+2x+1$ as generator of $[\Z_5]_3$ and let $f$
be the natural isomorphism between $[\Z_5]_3$ and $\Z_{[5]_3}$ mapping $g$ to 1. 
Consider the plane $\pi=\langle 1, g, g^3\rangle$ generated by the three points $1=g^0$, $g$ and $g^3$. The remaining points of $\pi$ 
are the following:
$$g+1=g^{69}, \quad g+2=g^5, \quad g^3+1=g^{86}, \quad g^3+2=g^{15},$$ 
$$g^3+g=g^{47}, \quad g^3+g+1=g^{93}, \quad g^3+g+2=g^{49},$$ 
$$g^3+2g=g^{75}, \quad g^3+2g+1=g^{77}, \quad g^3+2g+2=g^{28}.$$

One can check that the list of differences of $\pi$ has at least one element in each of the 24 orbits of Frob$([\Z_5]_3)$ on $[\Z_5]_3^*$.
Then it makes sense to look for a $P_{13}$-plane $B$ with vertex set $\pi$ whose list of differences has exactly one element
in each of those orbits. Such a $B$ will be the desired $P_{13}$-plane.
The reader can easily recognize that a good $B$ is, for instance, the preimage under $f$ of the path depicted below. 
\begin{center}
	\begin{tikzpicture}[-,auto,node distance=2cm,thick,main node/.style={circle,fill=black,draw}]
	\node[circle,fill,scale=0.5](1) {};
	\node[circle,fill,scale=0.5, above right of=1](2) {};
	\node[circle,fill,scale=0.5, below right of=2](3) {};
	\node[circle,fill,scale=0.5, above right of=3](4) {};
	\node[circle,fill,scale=0.5, below right of=4](5) {};
	\node[circle,fill,scale=0.5, above right of=5](6) {};
	\node[circle,fill,scale=0.5, below right of=6](7) {};
	\node[circle,fill,scale=0.5, above right of=7](8) {};
	\node[circle,fill,scale=0.5, below right of=8](9) {};
	\node[circle,fill,scale=0.5, above right of=9](10) {};
	\node[circle,fill,scale=0.5, below right of=10](11) {};
	\node[circle,fill,scale=0.5, above right of=11](12) {};
	\node[circle,fill,scale=0.5, below right of=12](13) {};
	
	\path
	(1) edge node {} (2)
	(2) edge node {} (3)
	(3) edge node {} (4)
	(4) edge node {} (5)
	(5) edge node {} (6)
	(6) edge node {} (7)
	(7) edge node {} (8)
	(8) edge node {} (9)
	(9) edge node {} (10)
	(10) edge node {} (11)
	(11) edge node {} (12)
	(12) edge node {} (13);
	
	\node [below right] at (1) {$0$};
	\node [above right] at (2) {$1$};
	\node [below right] at (3) {$3$};
	\node [above right] at (4) {${69}$};
	\node [below right] at (5) {${86}$};
	\node [above right] at (6) {${93}$};
	\node [below right] at (7) {${77}$};
	\node [above right] at (8) {$5$};
	\node [below right] at (9) {${47}$};
	\node [above right] at (10) {${75}$};
	\node [below right] at (11) {${28}$};
	\node [above right] at (12) {${15}$};
	\node [below right] at (13) {${49}$};
	\end{tikzpicture}
\end{center}

\section{Vertex labelings of a difference graph with elements of a difference set}

In this section we make a digression on a (probably new) problem which is only seemingly unrelated to the main topic of this paper.
As a matter of fact, in the next section we will see how a specialization of this problem allows us to get several 2$-(v,\Gamma,\lambda)$
designs over $\F_q$ with $\Gamma$ of order $[v-1]_q$.

A $(v,\Gamma,\lambda)$ difference family in $G$ with only one base block will be naturally called a $(v,\Gamma,\lambda)$ {\it difference graph}. 
Anyway, we warn the reader that the term ``difference graph" already exists in other contexts
with a completely different meaning (see, e.g., \cite{Hammer}).
We note that when $G$ is cyclic and $\lambda=1$ this notion is completely equivalent to that of a {\it $\rho$-labeling of $\Gamma$} (see \cite{BE06}).
We also note that the vertex set of a $(v,\K_k,\lambda)$ difference graph in $G$ is nothing but a $(v,k,\lambda)$ {\it difference set} in $G$.
There is a wide literature on difference sets, for general background on them we refer to \cite{BJL,JPS}. 
Here we recall the definitions of the {\it Paley} and the {\it Singer} difference sets.
If $p\equiv3$ (mod 4) is a prime, then the set of non-zero squares of $\Z_p$ is a $(p,{p-1\over2},{p-3\over4})$ difference set,
which is called a Paley difference set. A Singer difference set is essentially 
the image of an arbitrary hyperplane of PG$(\F_q^v)$ in $[\Z_v]_q$. So its parameters are $([v]_q,[v-1]_q,[v-2]_q)$. 
Its development gives rise to the set of all the hyperplanes of PG$(\F_q^v)$, i.e., 
to the trivial complete 2$-(v,v-1,[v-2]_q)_q$ design.

The obvious necessary condition for the existence of a $(v,\Gamma,\lambda)$ difference graph in a certain group $G$ is that 
$\Gamma$ has size ${\lambda(v-1)\over2}$. If this condition is satisfied and $D$ is a $(v,k,\mu)$ difference set in $G$ for some 
pair $(\mu,k)$ with $k$ not smaller than the order of $\Gamma$ we can ask whether it is possible to realize the required difference 
graph in such a way that its vertex set is contained in $D$. In other words, we want to label the vertices of $\Gamma$ with elements of $D$ 
in such a way that the list of differences of adjacent labels covers every non-identity element of $G$ exactly $\lambda$ times. 
A labeling as above will be called a graceful $D$-labeling of $\Gamma$ since,  especially when $\lambda=1$, is reminiscent of 
the well known notion of a {\it graceful labeling} (see \cite{G} for a dynamic survey on this topic).

\begin{defn}\label{D-graceful}
Let $D$ be a $(v,k,\mu)$ difference set in a group $G$ and let $\Gamma=(V,E)$ be a graph
with $|V|\leq k$ and $|E|={\lambda(v-1)\over2}$ for some $\lambda$.
A {\it graceful $D$-labeling} of $\Gamma$ is an injective map $f:V\longrightarrow D$ such that the pair $(f(V),\{f(e) \ | \ e\in E\}$)
is a $(v,k,\lambda)$ difference graph. 
\end{defn}

We say that a pair $(D,\Gamma)$ as in the above definition is {\it admissible} or that $\Gamma$ is {\it $D$-admissible}.
Also, we say that $\Gamma$ is {\it $D$-graceful} if it admits a graceful $D$-labeling.
The problem of establishing which $D$-admissible graphs are\break $D$-graceful seems to us to be new. 
We speak of a graceful Singer or Paley ... $-$labeling of a graph $\Gamma$ to mean a
graceful $D$-labeling of $\Gamma$ with $D$ a difference set with the respective type.

Note that if $G$ is a group, then $G$ itself is trivially a $(v,v,v-2)$ difference set. Thus $\Gamma$ is $G$-graceful if and only if there exists a 
$(v,\Gamma,\lambda)$ difference graph in $G$. For instance, the well known fact that there is no $(43,7,1)$ difference set can be 
also expressed by saying that $\K_7$ is not $\Z_{43}$-graceful.

In the case that $\Gamma$ is complete, say $\Gamma=\K_h$, we also note that $\Gamma$ is $D$-graceful if and
only if that there exists a $(v,h,\lambda)$ difference set in $G$ which is contained in the $(v,k,\mu)$ difference set $D$.
Here is a remarkable example. The difference set $$D=\{{\bf1},3,{\bf5},6,7,{\bf11},17,18,20,21,{\bf24,25},26,{\bf27},29\}$$  is a Singer $(31,15,7)$ difference set in $\Z_{31}$ 
which contains the Singer $(31,6,1)$ difference set $D'=\{1,5,11,24,25,27\}$. Thus, considering that the development of $D$ is the set of
hyperplanes of PG$(\F_2^5)$ and that the development of $D'$ is the set of lines of PG$(\F_5^3)$, one might say that the 
projective plane of order 5 is ``nested" in the point-hyperplane design associated with the 4-dimensional projective space of order 2.
As far as we are aware nobody observed this before. Sophisticated ``games" using difference sets in the same group $G$, such as tiling $G$ with difference sets of the same parameters \cite{CKZ},  have been considered recently. Hence, it would be surprising if the problem of determining whether two difference sets
in the same group $G$ can be in inclusion relation was not considered before.

We could exhibit several examples of $D$-admissible graphs $\Gamma$ which are $G$-graceful but not $D$-graceful.
Consider for instance the $(15,7,3)$ difference set $D=\{0,1,2,4,5,8,10\}$ in $\Z_{15}$ and the $D$-admissible graph $\Gamma=C_3\cup C_4$ 
whose connected components are a 3-cycle and a 4-cycle.
There are many $(15,\Gamma,1)$ difference graphs; one of them is depicted below.
\begin{center}
	\begin{tikzpicture}[-,auto,node distance=2.5cm,thick,main node/.style={circle,fill=black,draw}]
	\node[circle,fill,scale=0.5](1) {};
	\node[circle,fill,scale=0.5, below left of=1](2) {};
	\node[circle,fill,scale=0.5, below right of=1](3) {};
	\node[circle,fill,scale=0.5, right of=3](4) {};
	\node[circle,fill,scale=0.5, above of=4](5) {};
	\node[circle,fill,scale=0.5, right of=4](7) {};
	\node[circle,fill,scale=0.5, above of=7](6) {};
		
	\path
	(1) edge node {} (2)
	(2) edge node {} (3)
	(3) edge node {} (1)
	(4) edge node {} (5)
	(5) edge node {} (6)
	(6) edge node {} (7)
	(7) edge node {} (4);
	
	\node [above] at (1) {3};
	\node [below left] at (2) {6};
	\node [below right] at (3) {10};
	\node [below left] at (4) {0};
	\node [above left] at (5) {1};
	\node [above right] at (6) {7};
	\node [below right] at (7) {2};
	\end{tikzpicture}
\end{center}
On the other hand, by exhaustive search we have checked that none of them has vertex set $D$. Thus $C_3\cup C_4$ is $\Z_{15}$-graceful but not $D$-graceful.

Here is instead an example of a $D$-admissible graph which is also $D$-graceful.
Let $D=\{1,4,5,6,7,9,11,16,17\}$ be the Paley $(19,9,4)$ difference set and let $\Gamma$ be the 3-prism.
A graceful Paley-labeling of $\Gamma$ is the following: 

\begin{center}
	\begin{tikzpicture}
	\node[minimum size=1.5cm, regular polygon, regular polygon sides=3, rotate=180] (epta) {};
	\foreach \x in {1,2,3}{%
		\node[style={circle,fill,scale=0.5}] at (epta.corner \x) (e\x) {};
	}
	\node[minimum size=4cm, regular polygon, regular polygon sides=3, rotate=180] (septa) {};
	\foreach \x in {1,2,3}{%
		\node[style={circle,fill,scale=0.5}] at (septa.corner \x) (s\x) {};
	}
	\path
	(e1) edge node {} (e2)
	(e2) edge node {} (e3)
	(e3) edge node {} (e1)
	(s1) edge node {} (s2)
	(s2) edge node {} (s3)
	(s3) edge node {} (s1)
	(e1) edge node {} (s1)
	(e2) edge node {} (s2)
	(e3) edge node {} (s3)
	;
	\node [below right] at (e1) {17};
	\node [above right] at (e2) {9};
	\node [above right] at (e3) {11};
	\node [below right] at (s1) {16};
	\node [above right] at (s2) {4};
	\node [above right] at (s3) {1};
	\end{tikzpicture}
\end{center}

The notion introduced in Definition \ref{D-graceful} seems to be particularly interesting 
when $\Gamma$ has order $|D|$.
We do not have at the moment any example of an admissible pair $(D,\Gamma)$ where $\Gamma$ is
a regular connected graph of order $|D|$ that is not $D$-graceful. Thus we hazard the following conjecture.

\begin{conj}\label{conj}
Let $D$ be a $(v,k,\mu)$ difference set in $G$ and set $\lambda_i={\mu i\over\gcd(k-1,\mu)}$
for $1\leq i\leq \gcd(k-1,\mu)$. Then, any connected and regular graph $\Gamma$ of order $k$ and degree $(k-1)i\over\gcd(k-1,\mu)$
is $D$-graceful, hence there exists a $(v,\Gamma,\lambda_i)$ difference graph with vertex set $D$.
\end{conj}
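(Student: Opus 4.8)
Here we sketch the approach we would take towards Conjecture~\ref{conj}. The plan is to view the problem as a pure \emph{relabeling} question. Fix any bijection $f\colon V(\Gamma)\to D$; since $\Gamma$ is connected, regular of degree $d=\tfrac{(k-1)i}{\gcd(k-1,\mu)}$ and of order $k=|D|$, the image graph $\Gamma_f\subseteq\K_D$ is automatically spanning, $d$-regular, and, using the defining identity $\mu(v-1)=k(k-1)$ of a $(v,k,\mu)$ difference set, has exactly $\tfrac12\lambda_i(v-1)=|E(\Gamma)|$ edges; hence $f$ is a graceful $D$-labeling if and only if the multiset $\Delta\Gamma_f$ of its $2|E(\Gamma)|$ edge-differences covers $G^*$ exactly $\lambda_i$ times, and this is the only condition left to secure. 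The first point to record is that the \emph{average} behaviour is already correct: for a uniformly random bijection $f$ and any $g\in G^*$, since $g$ has exactly $\mu$ representations as a difference of two elements of $D$ and $2|E(\Gamma)|=\tfrac{i\,k(k-1)}{\gcd(k-1,\mu)}$, linearity of expectation gives that the expected number of ordered adjacent pairs $(x,y)$ of $\Gamma_f$ with $x-y=g$ equals $\dfrac{2|E(\Gamma)|\,\mu}{k(k-1)}=\lambda_i$. So a graceful $D$-labeling is exactly a bijection attaining this common expectation simultaneously for all $g$, and the whole difficulty is to produce one.

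To pass from ``right on average'' to ``right exactly'' I would try a switching argument on the set of all $k!$ bijections. Put $\Phi(f)=\sum_{g\in G^*}\bigl(m_g(f)-\lambda_i\bigr)^2$, where $m_g(f)$ is the multiplicity of $g$ in $\Delta\Gamma_f$, so that the goal is $\min_f\Phi(f)=0$; starting from a random $f$, for which $\Phi$ is small on average, one would show that whenever $\Phi(f)>0$ some transposition of two labels (or a short product of such) strictly decreases $\Phi$. Here connectedness and regularity of $\Gamma$, the very features separating the conjecturally $D$-graceful graphs from the disconnected, irregular counterexample noted above, should be what keeps enough profitable moves available. For structured difference sets a second, more algebraic route is available: when $D$ is a Singer difference set in $[\Z_v]_q$ (the case named in the Introduction, which gives infinite families of $2$-$(v,\Gamma,1)$ designs over $\F_q$) or a Paley difference set, one imposes a group of multipliers --- the appropriate power of the Frobenius map $\phi(x)=x^q$ together with suitable field automorphisms --- and reduces the search to a single ``initial'' sub-configuration, exactly as in Theorems~\ref{multiplier1} and~\ref{multiplier3}; this is the mechanism behind the explicit examples of Sections~\ref{cyclesection}, 8 and 10. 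For particular families of $\Gamma$ (cycles, prisms, generalized Petersen graphs, Moebius ladders, and the trivial extreme $\Gamma=\K_k$ with $i=\gcd(k-1,\mu)$ and $\lambda_i=\mu$) one can instead adapt known graceful- or $\rho$-labeling constructions.

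The main obstacle is that the statement is, in substance, a graceful-labeling existence theorem, for which no general method is known. Already the case $\lambda_i=1$ with $\Gamma$ a path or a cycle is a restricted $\rho$-labeling problem, and there is no proof that the potential $\Phi$ above can always be driven to $0$; moreover the conjecture asks for \emph{every} connected regular $\Gamma$ of the prescribed order and degree, not just for one witness. I therefore expect a full proof to be out of reach with present tools, and would regard as realistic intermediate targets: (i) the conjecture for all $D$ but for restricted classes of regular connected $\Gamma$; (ii) the conjecture for Singer and for Paley difference sets $D$ and arbitrary such $\Gamma$, which already yields the promised infinite families of graph decompositions over a finite field; and (iii) an asymptotic version, valid for $v$ large, via the probabilistic method with concentration and absorption, in the spirit of Wilson's lemma on evenly distributed differences.
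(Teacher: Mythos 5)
The statement you are addressing is Conjecture~\ref{conj}: the paper offers no proof of it, and explicitly presents it as open. What the paper does provide is (a) the observation that the extremal case $i=\gcd(k-1,\mu)$ is trivial because $\Gamma$ is then forced to be $\K_k$ and the claim reduces to $D$ being a difference set, (b) Proposition~\ref{Paley}, which settles the case where $D$ is a Paley difference set and $\Gamma$ is circulant by transporting the circulant structure through a group isomorphism $(\Z_{2n+1},+)\cong(D,\cdot)$, and (c) the explicit Singer-graceful labelings of Section~\ref{Singersection} supporting Conjecture~\ref{subconj}. So there is no ``paper's own proof'' to compare yours against, and your proposal, as you yourself acknowledge, does not supply one either.

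Within those limits, your preliminary observations are correct: the identity $\mu(v-1)=k(k-1)$ does give $2|E(\Gamma)|=\lambda_i(v-1)$, and your expectation computation showing that a uniformly random bijection achieves multiplicity $\lambda_i$ on average for every $g\in G^*$ is right; it is a clean restatement of why the pair $(D,\Gamma)$ is admissible. The genuine gap is everything after that. The switching argument is only named, not executed: you give no reason why, when $\Phi(f)>0$, a transposition (or bounded product of transpositions) strictly decreasing $\Phi$ must exist, and no second-moment bound showing $\Phi$ is small for a random $f$ in any sense strong enough to start an absorption argument. Connectedness and regularity are invoked as the features that ``should'' keep profitable moves available, but no mechanism is exhibited; note that the paper's counterexample $C_3\cup C_4$ versus the $(15,7,3)$ difference set shows that admissibility alone does not suffice, so any proof must use these hypotheses in an essential, currently unidentified way. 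Your fallback routes (multipliers à la Theorems~\ref{multiplier1} and~\ref{multiplier3} for Singer and Paley difference sets, and ad hoc labelings for specific $\Gamma$) coincide with what the paper actually carries out for isolated cases, but they do not scale to arbitrary connected regular $\Gamma$ of the prescribed degree. In short: the average-case analysis is sound, the worst-case (i.e., existential) step that would constitute the theorem is entirely missing, and your own assessment that the full statement is out of reach with these tools is accurate.
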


The conjecture is trivially true in the extremal case $i=\gcd(k-1,\mu)$. Indeed in this case
$\Gamma$ should have degree $k-1$, hence it is necessarily the complete graph and 
the above statement says that there exists a $D$-graceful labeling of $\K_k$. This 
is equivalent to saying that $D$ is a difference set, which holds by assumption.
Thus, in particular, the conjecture is trivially true when $\gcd(k-1,\mu)=1$. 

The following proposition shows that the conjecture is true when $D$ is a Paley 
difference set and $\Gamma$ is {\it circulant}. For convenience of the reader we recall that
if $S$ is a subset of $\{1,\dots,\lfloor{n\over2}\rfloor\}$, then the {\it circulant graph} $C(\Z_n;S)$ 
is the graph with vertex set $\Z_n$ whose edges are all pairs of the form $\{x,x+s\}$
with $x\in \Z_n$ and $s\in S$.

\begin{prop}\label{Paley}
Let $D$ be the Paley $(4n+3,2n+1,n)$ difference set and let $\Gamma$ be a circulant
graph of order $2n+1$. Then $\Gamma$ is $D$-graceful.
\end{prop}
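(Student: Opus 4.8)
The plan is to exploit the fact that, since $p:=4n+3$ is a prime with $p\equiv 3\pmod 4$, the Paley difference set $D$ (the set of nonzero squares of $\Z_p$) is not merely a subset of $\Z_p^*$ but the unique subgroup of the cyclic group $\Z_p^*$ of order $(p-1)/2=2n+1$; in particular $(D,\cdot)$ is cyclic of order $2n+1$. I would fix a generator $g$ of $D$ (for instance $g=\omega^2$ with $\omega$ a primitive root mod $p$) and take as candidate labeling the multiplicative-to-additive group isomorphism $f\colon \Z_{2n+1}\longrightarrow D$, $f(i)=g^i$. This is automatically a bijection onto $D$, so it is a legitimate candidate for a graceful $D$-labeling, and it only remains to verify the difference condition of Definition \ref{D-graceful}.

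First I would record the bookkeeping. Write $\Gamma=C(\Z_{2n+1};S)$ with $S\subseteq\{1,\dots,n\}$ and split its edge set into the connection classes $E_s=\{\{i,i+s\}:i\in\Z_{2n+1}\}$, $s\in S$. These are pairwise disjoint: for distinct $s,s'\in\{1,\dots,n\}$ one has $s\ne s'$ and $s+s'\le 2n<2n+1$, so no edge can arise from two different connection lengths. Hence $|E(\Gamma)|=|S|(2n+1)=|S|\cdot\frac{p-1}{2}$, which shows that the pair $(D,\Gamma)$ is admissible with $\lambda=|S|$ (and $|V(\Gamma)|=2n+1=|D|$, as required).

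The heart of the argument is the following computation, to be carried out one connection class at a time. For fixed $s\in S$ and $i\in\Z_{2n+1}$, the two ordered differences attached to the edge $\{i,i+s\}$ are $f(i)-f(i+s)=g^i(1-g^s)$ and its negative $-g^i(1-g^s)$. Since $g$ has order $2n+1>s\ge 1$ we have $g^s\ne 1$, so $1-g^s\ne 0$; and as $i$ runs over $\Z_{2n+1}$ the element $g^i$ runs over all of $D$. Therefore the multiset of differences contributed by $E_s$ is exactly the disjoint union $(1-g^s)D\ \uplus\ \bigl(-(1-g^s)D\bigr)$. Now $(1-g^s)D$ is one of the two cosets of $D$ in $\Z_p^*$ — namely $D$ itself or the set $N$ of nonsquares — and since $p\equiv 3\pmod 4$ makes $-1$ a nonsquare, multiplication by $-1$ swaps these two cosets, so $-(1-g^s)D$ is the other one. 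Hence $\Delta E_s=D\uplus N=\Z_p^*$, each nonzero element exactly once. Summing over $s\in S$ gives $\Delta\Gamma=|S|\cdot\Z_p^*$, i.e.\ $\bigl(f(V),\{f(e):e\in E(\Gamma)\}\bigr)$ is a $(p,2n+1,|S|)$ difference graph, so $f$ is a graceful $D$-labeling and $\Gamma$ is $D$-graceful.

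I do not expect a genuine obstacle here: once one notices that $D$ is a multiplicative group, everything reduces to elementary coset arithmetic in $\Z_p^*$, the only facts used being that $D$ is closed under multiplication and that $-1\notin D$. The two small points that deserve a line of justification are the disjointness of the connection classes (immediate from $S\subseteq\{1,\dots,n\}$) and the claim that $(1-g^s)D$ and $-(1-g^s)D$ are complementary cosets. It is perhaps worth remarking that this establishes slightly more than Conjecture \ref{conj} asks for in the Paley case, since $\Gamma$ need not be assumed connected.
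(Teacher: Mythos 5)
Your proposal is correct and follows essentially the same route as the paper: both take the labeling to be a group isomorphism from $(\Z_{2n+1},+)$ onto the multiplicative group $D$ of nonzero squares, and both conclude via the identity $\{1,-1\}\cdot D=\Z_{4n+3}^*$, which holds because $-1$ is a nonsquare when $4n+3\equiv 3\pmod 4$. Your version merely organizes the computation connection class by connection class and spells out the minor points (disjointness of the classes, $1-g^s\neq 0$) that the paper leaves implicit.
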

\begin{proof}
By assumption, $4n+3$ is a prime and $D$ is the set of non-zero squares of $\Z_{4n+3}$, i.e., the
subgroup $D$ of order $2n+1$ of the multiplicative group of $\Z_{4n+3}$. Also by assumption we
have $\Gamma=C(\Z_{2n+1};S)$ for a suitable set $S$. We claim that 
any isomorphism $f$ between the two groups $(\Z_{2n+1},+)$ and $(D,\cdot)$ is a $D$-graceful labeling of $\Gamma$. 
It is enough to show that the list of differences, in $\Z_{4n+3}$, of the graph $\Gamma':=f(\Gamma)$
is evenly distributed over the non-zero elements of $\Z_{4n+3}$.
The edge-set of $\Gamma'$ consists of all possible pairs of the form $\{d,ds'\}$ with $d\in D$ and $s'\in S':=f(S)$.
Thus we have $$\Delta\Gamma'=\{1,-1\}\cdot\{d(1-s') \ | \ d\in D,s'\in S'\}=\{1,-1\}\cdot D\cdot\{1-s' \ | \ s'\in S'\}.$$
Now recall that $4n+3$ is a prime and that $-1$ is a non-square in every field of order congruent to 3 (mod 4).
It follows that we have $\{1,-1\}\cdot D=\Z_{4n+3}^*$. Hence we see that $\Delta\Gamma'$ covers every non-zero 
element of $\Z_{4n+3}$ exactly $|S|$ times and the assertion follows.
\end{proof}

\section{Singer graceful graphs and related graph decompositions over a finite field}\label{Singersection}

By Definition \ref{D-graceful} and Theorem \ref{df_q}, we can state the following.
\begin{prop}\label{singergraceful}
If a graph $\Gamma$ of order $[v-1]_q$ 
and size ${\lambda q[v-1]_q\over2}$ is Singer-graceful, then there exists 
a cyclic $2$$-(v,\Gamma,\lambda)$ design over $\F_q$. 
\end{prop}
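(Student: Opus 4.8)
The plan is to unwind the definitions and invoke the machinery already set up in the excerpt, so that the statement becomes almost a tautology. Suppose $\Gamma$ has order $[v-1]_q$ and size $\frac{\lambda q[v-1]_q}{2}$, and suppose $\Gamma$ is Singer-graceful. By the definition of a Singer difference set, the relevant $D$ is (up to isomorphism) the image in $[\Z_v]_q$ of a hyperplane of $\PG(\F_q^v)$; its parameters are $([v]_q,[v-1]_q,[v-2]_q)$, so in particular $|D|=[v-1]_q$, which matches the order of $\Gamma$. Thus a graceful $D$-labeling $f$ of $\Gamma$ exists, and by Definition~\ref{D-graceful} the pair $\bigl(f(V),\{f(e)\mid e\in E\}\bigr)$ is a $([v]_q,\Gamma,\lambda)$ difference graph in $[\Z_v]_q$, i.e.\ a $(v,\Gamma,\lambda)$ difference family with a single base block, whose vertex set is $D$.

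The key remaining point is that this difference family is in fact a difference family \emph{over} $\F_q$ in the sense of the excerpt, i.e.\ that its unique base block is a $\Gamma$-subspace of $\PG(\F_q^v)$. This is where the specific choice of $D$ as a Singer difference set is essential rather than cosmetic: since $D$ is the image of a hyperplane $\pi$ of $\PG(\F_q^v)$ under the natural identification of points of $\PG(\F_q^v)$ with elements of $[\Z_v]_q$, the vertex set $f(V)=D$ of the base block is exactly the point set of the $(v-2)$-dimensional subspace $\pi$. Hence the base block is a $\Gamma$-subspace of $\PG(\F_q^v)$, and the difference family qualifies as a $(v,\Gamma,\lambda)$ difference family over $\F_q$.

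Once that identification is in place, the conclusion is immediate from Theorem~\ref{df_q}: the development of a $(v,\Gamma,\lambda)$ difference family over $\F_q$ is a $2$-$(v,\Gamma,\lambda)$ design over $\F_q$. Moreover this design is cyclic, since its block set is $\mathrm{dev}({\cal F})=\{B_g\mid g\in[\Z_v]_q\}$ and the Singer group $[\Z_v]_q$ acts on it by translation, sharply transitively on the point set; this cyclic subgroup lies in $\mathrm{Aut}$ of the design by Theorem~\ref{df_q} (or directly from Theorem~\ref{df}), which is exactly the definition of a cyclic $2$-design over a finite field given in Section~5.

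I do not expect a serious obstacle here — the statement is essentially a repackaging of earlier results. The one point requiring a line of care is the compatibility of the size hypothesis with the existence of a difference graph: a $(v,\Gamma,\lambda)$ difference graph requires $\Gamma$ to have size $\frac{\lambda([v]_q-1)}{2}=\frac{\lambda q[v-1]_q}{2}$, which is precisely the hypothesis imposed on the size of $\Gamma$, so no admissibility issue arises. The only genuinely substantive ingredient — that a Singer difference set, viewed inside $[\Z_v]_q$, literally \emph{is} the point set of a projective hyperplane, so that a graceful labeling automatically produces a subspace block — is already recorded in Section~9, and everything else is bookkeeping.
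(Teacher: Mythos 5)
Your proposal is correct and follows exactly the route the paper intends: the paper's entire ``proof'' is the single line ``By Definition \ref{D-graceful} and Theorem \ref{df_q}'', and you have simply unpacked that citation — a graceful Singer-labeling yields a one-block $(v,\Gamma,\lambda)$ difference family whose base block has as vertex set the full Singer difference set, i.e.\ a hyperplane of $\PG(\F_q^v)$, so it is a difference family over $\F_q$ and Theorem \ref{df_q} applies. Your added care about the size condition $|E|=\frac{\lambda([v]_q-1)}{2}=\frac{\lambda q[v-1]_q}{2}$ and about why the base block is literally a subspace (because $|V(\Gamma)|=|D|$ forces the labeling to be onto $D$) is exactly the bookkeeping the paper leaves implicit.
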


Thus, if Conjecture \ref{conj} were true, we would have an infinite family of non-trivial graph decompositions over a finite field.
Indeed, specializing the conjecture to the case that $D$ is the Singer $([v]_q,[v-1]_q,[v-2]_q)$ difference set, we note that $\lambda_i=i$
for each $i$ and hence  we obtain the following subconjecture.

\begin{conj}\label{subconj}
Any regular graph $\Gamma$ of order $[v-1]_q$ and degree $qi$ with $1\leq i\leq [v-2]_q$ is Singer-graceful,
hence there exists a cyclic $2$-$(v,\Gamma,i)$ design over $\F_q$.
\end{conj}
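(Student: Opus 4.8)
The plan is to reduce the subconjecture to Conjecture~\ref{conj} and then note what remains genuinely open. First I would observe that the Singer difference set $D$ associated with a hyperplane of $\PG(\F_q^v)$ has parameters $(v,k,\mu)=([v]_q,[v-1]_q,[v-2]_q)$, and compute $\gcd(k-1,\mu)$. Using the identities from Proposition~\ref{steineradmissibility2}, namely $[v-1]_q-1=q[v-2]_q$ and (\ref{gcd}), one finds $\gcd(k-1,\mu)=\gcd(q[v-2]_q,[v-2]_q)=[v-2]_q$, since $[v-2]_q\mid q[v-2]_q$. Consequently $\lambda_i=\dfrac{\mu i}{\gcd(k-1,\mu)}=\dfrac{[v-2]_q\, i}{[v-2]_q}=i$ for $1\le i\le [v-2]_q$, and the degree prescribed by the conjecture is $\dfrac{(k-1)i}{\gcd(k-1,\mu)}=\dfrac{q[v-2]_q\, i}{[v-2]_q}=qi$. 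So the hypotheses of Conjecture~\ref{conj}, specialized to $D$ Singer, say exactly: every connected regular graph $\Gamma$ of order $[v-1]_q$ and degree $qi$ is $D$-graceful, with associated index $\lambda_i=i$. This is precisely the statement of Conjecture~\ref{subconj}.

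Next I would invoke Definition~\ref{D-graceful} together with Theorem~\ref{df_q}: a graceful $D$-labeling of such a $\Gamma$ produces a $([v]_q,\Gamma,i)$ difference graph with vertex set $D$, which is in particular a $(v,\Gamma,i)$ difference family over $\F_q$ (its single base block has a hyperplane, hence a subspace of $\PG(\F_q^v)$, as vertex set). Its development is then a $2$-$(v,\Gamma,i)$ design over $\F_q$, cyclic because it arises from a single base block developed over the Singer group $[\Z_v]_q$. This last implication is also recorded in Proposition~\ref{singergraceful}, so the derivation of Conjecture~\ref{subconj} from Conjecture~\ref{conj} is entirely formal once the arithmetic of $\gcd(k-1,\mu)$ is settled; it amounts to checking that the ``$\lambda_i=i$, degree $qi$'' bookkeeping matches.

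The genuinely hard part, of course, is Conjecture~\ref{conj} itself, which is not proved in the paper and for which only the extremal case $i=\gcd(k-1,\mu)$ (forcing $\Gamma=\K_k$, where gracefulness is equivalent to $D$ being a difference set) and the circulant Paley case (Proposition~\ref{Paley}) are established. For a general connected regular $\Gamma$ of degree $qi$ on $[v-1]_q$ vertices, one must exhibit an injective vertex labeling by elements of the Singer difference set $D$ so that the multiset of differences of adjacent labels, computed in $[\Z_v]_q\cong\Z_{[v]_q}$, covers every nonzero element exactly $i$ times. I would expect an attack to proceed by a counting/probabilistic heuristic — a random labeling has the right average multiplicity, and one would try to show concentration or use a switching/local-correction argument — but pushing this through uniformly over all such $\Gamma$ is the real obstacle, and is left open here. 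The honest scope of the present statement is therefore: \emph{assuming Conjecture~\ref{conj}}, the specialization to the Singer difference set is immediate and yields the claimed infinite family of cyclic $2$-$(v,\Gamma,i)$ designs over $\F_q$.
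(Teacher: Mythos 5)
Your derivation matches the paper's own: the statement is obtained exactly by specializing Conjecture~\ref{conj} to the Singer $([v]_q,[v-1]_q,[v-2]_q)$ difference set, where $k-1=q[v-2]_q$ gives $\gcd(k-1,\mu)=[v-2]_q$, hence $\lambda_i=i$ and degree $qi$, with Proposition~\ref{singergraceful} supplying the ``hence there exists a cyclic design'' clause. You also correctly flag that the underlying Conjecture~\ref{conj} remains unproved, so nothing more is (or could be) established here.
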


Now we give some examples supporting the above subconjecture; namely a graceful Singer-labeling of some cycles, of a prism, of a
Moebius ladder and of some generalized Petersen graphs.

\subsection{A cyclic $2$$-(v,C_{v-1},1)_2$ design for $v=4,5,6,7$}\label{singergracefulcycles}

By Proposition \ref{singergraceful}, to prove the existence of a cyclic 2$-(v,C_{v-1},1)_2$ design it is enough to show a
Singer-graceful labeling of $[C_{v-1}]_2$, the cycle of length $[v-1]_2$. We do this for $v=4,5,6,7$.

$\bf v=4$.

\noindent
The image of a Singer $(15,7,3)$ difference set in $\Z_{15}$ is
$$D_{15}=\{0,1,2,4,5,8,10\}.$$
and a graceful $D_{15}$-labeling of $C_{7}$ is given by
\begin{center}
	\begin{tikzpicture}
	\node[minimum size=3cm, regular polygon, regular polygon sides=7, rotate=180] (epta) {};
	\foreach \x in {1,2,...,7}{%
		\node[style={circle,fill,scale=0.5}] at (epta.corner \x) (e\x) {};
	}
	\path
	(e1) edge node {} (e2)
	(e2) edge node {} (e3)
	(e3) edge node {} (e4)
	(e4) edge node {} (e5)
	(e5) edge node {} (e6)
	(e6) edge node {} (e7)
	(e7) edge node {} (e1)
	;
	
	\node [below right] at (e1) {10};
	\node [right] at (e2) {4};
	\node [right] at (e3) {8};
	\node [above right] at (e4) {1};
	\node [above right] at (e5) {0};
	\node [below left] at (e6) {2};
	\node [below left] at (e7) {5};
	\end{tikzpicture}
\end{center}

$\bf v=5$.

\noindent
The image of a Singer $(31,15,7)$ difference set in $\Z_{31}$ is
$$D_{31}=\{1,2,3,4,6,8,12,15,16,17,23,24,27,29,30\}$$
and a graceful $D_{31}$-labeling of $C_{15}$ is given by
\begin{center}
	\begin{tikzpicture}
	\node[minimum size=4.5cm, regular polygon, regular polygon sides=15, rotate=180] (epta) {};
	\foreach \x in {1,2,...,15}{%
		\node[style={circle,fill,scale=0.5}] at (epta.corner \x) (e\x) {};
	}
	\path
	(e1) edge node {} (e2)
	(e2) edge node {} (e3)
	(e3) edge node {} (e4)
	(e4) edge node {} (e5)
	(e5) edge node {} (e6)
	(e6) edge node {} (e7)
	(e7) edge node {} (e8)
	(e8) edge node {} (e9)
	(e9) edge node {} (e10)
	(e10) edge node {} (e11)
	(e11) edge node {} (e12)
	(e12) edge node {} (e13)
	(e13) edge node {} (e14)
	(e14) edge node {} (e15)
	(e15) edge node {} (e1)
	;
	
	\node [above] at (e9) {1};
	\node [above] at (e8) {2};
	\node [above right] at (e7) {17};
	\node [above right] at (e6) {15};
	\node [right] at (e5) {29};
	\node [right] at (e4) {24};
	\node [below right] at (e3) {27};
	\node [below right] at (e2) {4};
	\node [below] at (e1) {23};
	\node [below left] at (e15) {12};
	\node [left] at (e14) {6};
	\node [left] at (e13) {16};
	\node [left] at (e12) {3};
	\node [above left] at (e11) {30};
	\node [above left] at (e10) {8};
	\end{tikzpicture}
\end{center}

$\bf v=6$.

\noindent
The image of a Singer $(63,31,15)$ difference set in $\Z_{63}$ is

\medskip\noindent
$D_{63}=\{0,1,2,3,4,6,7,8,9,12,13,14,16,18,19,24,26,27,28,$

\medskip
\hfill$32,33,35,36,38,41,45,48,49,52,54,56\}$

\medskip\noindent
and a graceful $D_{63}$-labeling of $C_{31}$ is given by
\begin{center}
	\begin{tikzpicture}\node[minimum size=7cm, regular polygon, regular polygon sides=31, rotate=180] (epta) {};
	\foreach \x in {1,2,...,31}{%
		\node[style={circle,fill,scale=0.5}] at (epta.corner \x) (e\x) {};
	}
	\path
	(e1) edge node {} (e2)
	(e2) edge node {} (e3)
	(e3) edge node {} (e4)
	(e4) edge node {} (e5)
	(e5) edge node {} (e6)
	(e6) edge node {} (e7)
	(e7) edge node {} (e8)
	(e8) edge node {} (e9)
	(e9) edge node {} (e10)
	(e10) edge node {} (e11)
	(e11) edge node {} (e12)
	(e12) edge node {} (e13)
	(e13) edge node {} (e14)
	(e14) edge node {} (e15)
	(e15) edge node {} (e16)
	(e16) edge node {} (e17)
	(e17) edge node {} (e18)
	(e18) edge node {} (e19)
	(e19) edge node {} (e20)
	(e20) edge node {} (e21)
	(e21) edge node {} (e22)
	(e22) edge node {} (e23)
	(e23) edge node {} (e24)
	(e24) edge node {} (e25)
	(e25) edge node {} (e26)
	(e26) edge node {} (e27)
	(e27) edge node {} (e28)
	(e28) edge node {} (e29)
	(e29) edge node {} (e30)
	(e30) edge node {} (e31)
	(e31) edge node {} (e1)
	;
	
	\node [above] at (e17) {0};
	\node [above] at (e16) {2};
	\node [above] at (e15) {6};
	\node [above right] at (e14) {3};
	\node [right] at (e13) {8};
	\node [right] at (e12) {1};
	\node [right] at (e11) {12};
	\node [right] at (e10) {18};
	\node [right] at (e9) {19};
	\node [right] at (e8) {32};
	\node [below right] at (e7) {52};
	\node [below right] at (e6) {4};
	\node [below right] at (e5) {33};
	\node [below right] at (e4) {45};
	\node [below] at (e3) {14};
	\node [below] at (e2) {36};
	\node [below] at (e1) {13};
	\node [below] at (e31) {38};
	\node [below left ] at (e30) {56};
	\node [below left] at (e29) {26};
	\node [left] at (e28) {16};
	\node [left] at (e27) {7};
	\node [left] at (e26) {54};
	\node [left] at (e25) {27};
	\node [left] at (e24) {48};
	\node [left] at (e23) {24};
	\node [left] at (e22) {41};
	\node [above left] at (e21) {49};
	\node [above left] at (e20) {35};
	\node [above left] at (e19) {9};
	\node [above left] at (e18) {28};
	\end{tikzpicture}
\end{center}

$\bf v=7$.

The image of a Singer $(127,63,31)$ difference set in $\Z_{127}$ is

\medskip\noindent
\small $D_{127}=\{1, 2, 3, 4, 5, 6, 8, 9, 10, 11, 12, 15, 16, 17, 18, 20, 22, 23, 24, 29, 30,
32, 33, 34, 36,$ 

\noindent
$39, 40, 44, 46, 48, 49, 55, 57, 58, 59, 60, 64, 65, 66, 68,69, 71, 72, 75, 78, 80, 83, 88, 91, 92,$

\noindent\hfill
$93, 96, 98, 99, 101, 105, 109, 110,113, 114, 116, 118, 120\}.$

\normalsize\medskip
The above difference set $D_{127}$ admits 2 as a multiplier. In this case we have $2\cdot D_{127}=D_{127}$, i.e., $D_{127}$
is fixed by the multiplication by 2. The task of finding a graceful $D_{127}$-labeling of $C_{63}$ 
is facilitated if we impose that the resultant $(127,C_{63},1)$ difference graph is also fixed by the multiplication by 2.
This happens provided that the multiplication by 2 acts as a rotation about the center of this graph by ${2\pi\over 7}$.
Equivalently, the required difference graph has to be the union of the orbits of a path of size 9 under Frob$([\Z_7]_2)$.
A solution is represented in the next figure. The reader can recognize that the resulting cycle is indeed
the union of the orbits of the path $[1,3,9,101,91,5,83,113,11,2]$ under Frob$([\Z_7]_2)$.

\scriptsize\begin{center}
	\begin{tikzpicture}\node[minimum size=11cm, regular polygon, regular polygon sides=63, rotate=180] (epta) {};
	\foreach \x in {1,2,...,63}{%
		\node[style={circle,fill,scale=0.5}] at (epta.corner \x) (e\x) {};
	}
	
	\path
	(e1) edge node {} (e2)
	(e2) edge node {} (e3)
	(e3) edge node {} (e4)
	(e4) edge node {} (e5)
	(e5) edge node {} (e6)
	(e6) edge node {} (e7)
	(e7) edge node {} (e8)
	(e8) edge node {} (e9)
	(e9) edge node {} (e10)
	(e10) edge node {} (e11)
	(e11) edge node {} (e12)
	(e12) edge node {} (e13)
	(e13) edge node {} (e14)
	(e14) edge node {} (e15)
	(e15) edge node {} (e16)
	(e16) edge node {} (e17)
	(e17) edge node {} (e18)
	(e18) edge node {} (e19)
	(e19) edge node {} (e20)
	(e20) edge node {} (e21)
	(e21) edge node {} (e22)
	(e22) edge node {} (e23)
	(e23) edge node {} (e24)
	(e24) edge node {} (e25)
	(e25) edge node {} (e26)
	(e26) edge node {} (e27)
	(e27) edge node {} (e28)
	(e28) edge node {} (e29)
	(e29) edge node {} (e30)
	(e30) edge node {} (e31)
	(e31) edge node {} (e32)
	(e32) edge node {} (e33)
	(e33) edge node {} (e34)
	(e34) edge node {} (e35)
	(e35) edge node {} (e36)
	(e36) edge node {} (e37)
	(e37) edge node {} (e38)
	(e38) edge node {} (e39)
	(e39) edge node {} (e40)
	(e40) edge node {} (e41)
	(e41) edge node {} (e42)
	(e42) edge node {} (e43)
	(e43) edge node {} (e44)
	(e44) edge node {} (e45)
	(e45) edge node {} (e46)
	(e46) edge node {} (e47)
	(e47) edge node {} (e48)
	(e48) edge node {} (e49)
	(e49) edge node {} (e50)
	(e50) edge node {} (e51)
	(e51) edge node {} (e52)
	(e52) edge node {} (e53)
	(e53) edge node {} (e54)
	(e54) edge node {} (e55)
	(e55) edge node {} (e56)
	(e56) edge node {} (e57)
	(e57) edge node {} (e58)
	(e58) edge node {} (e59)
	(e59) edge node {} (e60)
	(e60) edge node {} (e61)
	(e61) edge node {} (e62)
	(e62) edge node {} (e63)
	(e63) edge node {} (e1)
	;
	
	\node [above] at (e33) {1};
	\node [above] at (e32) {3};
	\node [above] at (e31) {9};
	\node [above] at (e30) {101};
	\node [above right] at (e29) {91};
	\node [above right] at (e28) {5};
	\node [above right] at (e27) {83};
	\node [above right] at (e26) {113};
	\node [above right] at (e25) {11};
	\node [above right] at (e24) {2};
	\node [above right] at (e23) {6};
	\node [above right] at (e22) {18};
	\node [above right] at (e21) {75};
	\node [above right] at (e20) {55};
	\node [above right] at (e19) {10};
	\node [above right] at (e18) {39};
	\node [right] at (e17) {99};
	\node [right] at (e16) {22};
	\node [right] at (e15) {4};
	\node [right] at (e14) {12};
	\node [right] at (e13) {36};
	\node [right] at (e12) {23};
	\node [right] at (e11) {110};
	\node [right] at (e10) {20};
	\node [below right] at (e9) {78};
	\node [below right] at (e8) {71};
	\node [below right] at (e7) {44};
	\node [below] at (e6) {8};
	\node [below] at (e5) {24};
	\node [below] at (e4) {72};
	\node [below] at (e3) {46};
	\node [below] at (e2) {93};
	\node [below] at (e1) {40};
	\node [below] at (e63) {29};
	\node [below] at (e62) {15};
	\node [below] at (e61) {88};
	\node [below left] at (e60) {16};
	\node [below left] at (e59) {48};
	\node [below  left] at (e58) {17};
	\node [below left] at (e57) {92};
	\node [left] at (e56) {59};
	\node [left] at (e55) {80};
	\node [left] at (e54) {58};
	\node [left] at (e53) {30};
	\node [left] at (e52) {49};
	\node [left] at (e51) {32};
	\node [left] at (e50) {96};
	\node [left] at (e49) {34};
	\node [left] at (e48) {57};
	\node [left] at (e47) {118};
	\node [left] at (e46) {33};
	\node [left] at (e45) {116};
	\node [left] at (e44) {60};
	\node [left] at (e43) {98};
	\node [left] at (e42) {64};
	\node [left] at (e41) {65};
	\node [above left] at (e40) {68};
	\node [above left] at (e39) {114};
	\node [above left] at (e38) {109};
	\node [above left] at (e37) {66};
	\node [above left] at (e36) {105};
	\node [above] at (e35) {120};
	\node [above] at (e34) {69};
	\end{tikzpicture}
\end{center}

\normalsize
\subsubsection{A prism decomposition over $\F_3$}\label{prismsection}
The {\it prism graph} on $2n$ vertices is the cubic graph corresponding to the skeleton of an $n$-prism. 
Following \cite{HMP} we denote it by $\Pi_n$. 

Let us construct a cyclic $2$$-$$(5,\Pi_{40},1)$ design over $\F_3$. 
For this, it is enough to give a Singer-graceful labeling of $\Pi_{40}$.
The image of a Singer $(121,40,13)$ difference set in $\Z_{121}$ is

\medskip\noindent\small
$D = \{1,2,3,6,7,9,11,18,20,21,25,27,33,34,38,41,44,47,53,54,55,56,$

\smallskip
\hfill$58,59,60,63,64,68,70,71,75,81,83,89,92,99,100,102,104,114\}$
\medskip

\normalsize\noindent
and a $D$-graceful labeling of $\Pi_{40}$ is the following:

\small\begin{center}
\begin{tikzpicture}
\node[minimum size=5cm, regular polygon, regular polygon sides=20, rotate=180] (epta) {};
\foreach \x in {1,...,20}{%
\node[circle,fill,scale=0.5] at (epta.corner \x) (e\x) {};
}
\node[minimum size=6.5cm, regular polygon, regular polygon sides=20, rotate=180] (septa) {};
\foreach \x in {1,...,20}{%
\node[circle,fill,scale=0.5] at (septa.corner \x) (s\x) {};
}
\path
(s1) edge node {} (s2)
(s2) edge node {} (s3)
(s3) edge node {} (s4)
(s4) edge node {} (s5)
(s5) edge node {} (s6)
(s6) edge node {} (s7)
(s7) edge node {} (s8)
(s8) edge node {} (s9)
(s9) edge node {} (s10)
(s10) edge node {} (s11)
(s11) edge node {} (s12)
(s12) edge node {} (s13)
(s13) edge node {} (s14)
(s14) edge node {} (s15)
(s15) edge node {} (s16)
(s16) edge node {} (s17)
(s17) edge node {} (s18)
(s18) edge node {} (s19)
(s19) edge node {} (s20)
(s20) edge node {} (s1)

(s1) edge node {} (e1)
(s2) edge node {} (e2)
(s3) edge node {} (e3)
(s4) edge node {} (e4)
(s5) edge node {} (e5)
(s6) edge node {} (e6)
(s7) edge node {} (e7)
(s8) edge node {} (e8)
(s9) edge node {} (e9)
(s10) edge node {} (e10)
(s11) edge node {} (e11)
(s12) edge node {} (e12)
(s13) edge node {} (e13)
(s14) edge node {} (e14)
(s15) edge node {} (e15)
(s16) edge node {} (e16)
(s17) edge node {} (e17)
(s18) edge node {} (e18)
(s19) edge node {} (e19)
(s20) edge node {} (e20)

(e1) edge node {} (e2)
(e2) edge node {} (e3)
(e3) edge node {} (e4)
(e4) edge node {} (e5)
(e5) edge node {} (e6)
(e6) edge node {} (e7)
(e7) edge node {} (e8)
(e8) edge node {} (e9)
(e9) edge node {} (e10)
(e10) edge node {} (e11)
(e11) edge node {} (e12)
(e12) edge node {} (e13)
(e13) edge node {} (e14)
(e14) edge node {} (e15)
(e15) edge node {} (e16)
(e16) edge node {} (e17)
(e17) edge node {} (e18)
(e18) edge node {} (e19)
(e19) edge node {} (e20)
(e20) edge node {} (e1);

\node [above right] at (e11) {71};
\node [right] at (e10) {7};
\node [right ] at (e9) {25};
\node [below right] at (e8) {20};
\node [below right] at (e7) {92};
\node [below right] at (e6) {21};
\node [below ] at (e5) {75};
\node [below] at (e4) {60};
\node [below] at (e3) {34};
\node [below left] at (e2) {63};
\node [below left] at (e1) {104};
\node [left] at (e20) {59};
\node [left] at (e19) {102};
\node [above left] at (e18) {68};
\node [above left] at (e17) {70};
\node [above left] at (e16) {56};
\node [above] at (e15) {64};
\node [above] at (e14) {83};
\node [above] at (e13) {89};
\node [above right] at (e12) {47};

\node [above] at (s11) {\bf1};
\node [above] at (s10) {11};
\node [above right ] at (s9) {2};
\node [right] at (s8) {58};
\node [right] at (s7) {3};
\node [right] at (s6) {33};
\node [right] at (s5) {6};
\node [right] at (s4) {53};
\node [below right] at (s3) {9};
\node [below ] at (s2) {99};
\node [below ] at (s1) {18};
\node [below left] at (s20) {38};
\node [left] at (s19) {27};
\node [left] at (s18) {55};
\node [left] at (s17) {54};
\node [left] at (s16) {114};
\node [left] at (s15) {81};
\node [left] at (s14) {44};
\node [above left] at (s13) {41};
\node [above] at (s12) {100};
\end{tikzpicture}
\end{center}

\normalsize
Note that the above graph is fixed by the multiplication by 3. Indeed, denoting by $v_i$ the label of the $i$-th
``outer" vertex and by $v'_i$ the label of the corresponding ``inner" vertex, one can check that $v_{i+4}\equiv3v_i$ (mod 121) and 
$v'_{i+4}\equiv3v'_i$ (mod 121) for every possible $i$ (it is understood that the indices have to be considered modulo 40 and that 
the $(i+1)$-th vertex of the outer cycle follows the $i$-th one clockwise).
This means that the multiplication by 3 corresponds to a clockwise rotation of the above graph about its center by 72 degrees. 

\subsubsection{Generalized Petersen decompositions over $\F_3$}

Let $n\geq5$ be an integer, let $\Z'_n=\{0',1',\dots,(n-1)'\}$ be a disjoint isomorphic copy of $\Z_n$, and let
$k$ be an integer in the closed interval $[2,n-2]$. The {\it generalized Petersen graph} $P(n,k)$
is the graph of order $2n$ with vertex set $\Z_n \ \cup \ \Z'_n$ which is the
union of the circulant graphs $C(\Z_n;\{\pm1\})$, $C(\Z'_n;\{\pm k\})$ and the perfect matching $\{\{i,i'\} \ | \ 0\leq i\leq n-1\}$.   

We want to construct a cyclic 2$-(5,P(20,k),1)$ design over $\F_3$ for each possible $k$, hence for $k=2,3,4,5,6,7,8,9$. 
For this, it is enough to give a $D$-graceful labeling of $P(20,k)$ where $D$ is the Singer $(121,40,13)$ difference set that we gave in the previous subsection. 
Here is, for instance, a Singer-labeling of $P(20,3)$:

\small\begin{center}
\begin{tikzpicture}
\node[minimum size=5cm, regular polygon, regular polygon sides=20, rotate=180] (epta) {};
\foreach \x in {1,...,20}{%
\node[circle,fill,scale=0.5] at (epta.corner \x) (e\x) {};
}
\node[minimum size=7cm, regular polygon, regular polygon sides=20, rotate=180] (septa) {};
\foreach \x in {1,...,20}{%
\node[circle,fill,scale=0.5] at (septa.corner \x) (s\x) {};
}
\path
(s1) edge node {} (s2)
(s2) edge node {} (s3)
(s3) edge node {} (s4)
(s4) edge node {} (s5)
(s5) edge node {} (s6)
(s6) edge node {} (s7)
(s7) edge node {} (s8)
(s8) edge node {} (s9)
(s9) edge node {} (s10)
(s10) edge node {} (s11)
(s11) edge node {} (s12)
(s12) edge node {} (s13)
(s13) edge node {} (s14)
(s14) edge node {} (s15)
(s15) edge node {} (s16)
(s16) edge node {} (s17)
(s17) edge node {} (s18)
(s18) edge node {} (s19)
(s19) edge node {} (s20)
(s20) edge node {} (s1)

(s1) edge node {} (e1)
(s2) edge node {} (e2)
(s3) edge node {} (e3)
(s4) edge node {} (e4)
(s5) edge node {} (e5)
(s6) edge node {} (e6)
(s7) edge node {} (e7)
(s8) edge node {} (e8)
(s9) edge node {} (e9)
(s10) edge node {} (e10)
(s11) edge node {} (e11)
(s12) edge node {} (e12)
(s13) edge node {} (e13)
(s14) edge node {} (e14)
(s15) edge node {} (e15)
(s16) edge node {} (e16)
(s17) edge node {} (e17)
(s18) edge node {} (e18)
(s19) edge node {} (e19)
(s20) edge node {} (e20)

(e1) edge node {} (e18)
(e2) edge node {} (e19)
(e3) edge node {} (e20)
(e4) edge node {} (e1)
(e5) edge node {} (e2)
(e6) edge node {} (e3)
(e7) edge node {} (e4)
(e8) edge node {} (e5)
(e9) edge node {} (e6)
(e10) edge node {} (e7)
(e11) edge node {} (e8)
(e12) edge node {} (e9)
(e13) edge node {} (e10)
(e14) edge node {} (e11)
(e15) edge node {} (e12)
(e16) edge node {} (e13)
(e17) edge node {} (e14)
(e18) edge node {} (e15)
(e19) edge node {} (e16)
(e20) edge node {} (e17);

\node [above right] at (e11) {20};
\node [right] at (e10) {34};
\node [right ] at (e9) {25};
\node [below right] at (e8) {58};
\node [below right] at (e7) {60};
\node [below right] at (e6) {102};
\node [below ] at (e5) {75};
\node [below] at (e4) {53};
\node [below] at (e3) {59};
\node [below left] at (e2) {64};
\node [below left] at (e1) {104};
\node [left] at (e20) {38};
\node [left] at (e19) {56};
\node [above left] at (e18) {71};
\node [above left] at (e17) {70};
\node [above left] at (e16) {114};
\node [above] at (e15) {47};
\node [above] at (e14) {92};
\node [above] at (e13) {89};
\node [above right] at (e12) {100};

\node [above] at (s11) {\bf1};
\node [above] at (s10) {11};
\node [above] at (s9) {7};
\node [above right ] at (s8) {2};
\node [right] at (s7) {3};
\node [right] at (s6) {33};
\node [right] at (s5) {21};
\node [right] at (s4) {6};
\node [right] at (s3) {9};
\node [below right] at (s2) {99};
\node [below ] at (s1) {63};
\node [below ] at (s20) {18};
\node [below left] at (s19) {27};
\node [left] at (s18) {55};
\node [left] at (s17) {68};
\node [left] at (s16) {54};
\node [left] at (s15) {81};
\node [left] at (s14) {44};
\node [left] at (s13) {83};
\node [above left] at (s12) {41};
\end{tikzpicture}
\end{center}

\normalsize
Also here, as for the prism seen below, the multiplication by 3 corresponds to a clockwise rotation of the graph about its center by 72 degrees. 
We report below how to label the eight vertices $0,1,2,3,0',1',2',3'$ of $P(20,k)$ 
to obtain, with the same method, a $D$-graceful labeling of $P(20,k)$ for $2\leq k\leq9$.

\small\medskip\begin{center}
\renewcommand\arraystretch{1.3} 
\begin{tabular}{|c|c|c|c|c|c|c|c|c|c|c|c|}
\hline $k$ & $\{v_0,v_1,v_2,v_3\}$ & $\{v'_0,v'_1,v'_2,v'_3\}$\\
\hline $2$ & $\{1,7,2,34\}$ & $\{70,20,11,53\}$ \\
\hline 3 & $\{1,11,7,2\}$ & $\{20,34,25,58\}$\\
\hline 4 & $\{1,7,2,100\}$ & $\{11,20,64,25\}$ \\
\hline 5 & $\{1,20,2,25\}$ & $\{64,7,53,11\}$ \\
\hline
\end{tabular}\hfill
\begin{tabular}{|c|c|c|c|c|c|c|c|c|c|c|c|}
\hline $k$ & $\{v_0,v_1,v_2,v_3\}$ & $\{v'_0,v'_1,v'_2,v'_3\}$\\
\hline $6$ & $\{1,58,7,25\}$ & $\{11,2,20,34\}$ \\ 
\hline 7& $\{1,25,7,34\}$ & $\{20,114,2,11\}$\\
\hline 8 & $\{1,20,58,7\}$ & $\{25,2,11,34\}$ \\
\hline 9 & $\{1,53,11,64\}$ & $\{7,2,20,25\}$ \\
\hline
\end{tabular}
\end{center}
\normalsize
As a matter of fact it would not have been necessary to treat both $k=3$ and $k=7$ since $P(20,3)$ and $P(20,7)$ are  isomorphic (see, e.g., \cite{SS}).

In conclusion, we have given a $(5,P(20,k),1)$ difference graph over $\F_3$ which is fixed by Frob$[\Z_5]_3$ for $2\leq k\leq 9$.

\subsubsection{A Moebius ladder decomposition over $\F_3$}
The Moebius ladder $M_{2n}$ is the circulant graph $C(\Z_{2n};\{1,n\})$. In simpler words, it is the
cubic graph of order $2n$ obtained from the $2n$-cycle $(c_0,c_1,\dots,c_{2n-1})$ by
adding all possible diameters, i.e., all edges of the form $\{c_i,c_{i+n}\}$ with $1\leq i\leq n$. 

Let us construct a cyclic 2$-(5,M_{40},1)$ design over $\F_3$. For this, it is enough to give a $D$-graceful labeling
of $M_{40}$ where $D$ is the Singer $(121,40,13)$ difference set that we gave in Subsection \ref{prismsection}. 
Such a labeling is the one shown below:

\scriptsize\begin{center}
	\begin{tikzpicture}\node[minimum size=8cm, regular polygon, regular polygon sides=40, rotate=180] (epta) {};
	\foreach \x in {1,...,40}{%
		\node[circle,fill,scale=0.5] at (epta.corner \x) (\x) {};
	}	
	\path
	(1) edge node {} (2)
	(2) edge node {} (3)
	(3) edge node {} (4)
	(4) edge node {} (5)
	(5) edge node {} (6)
	(6) edge node {} (7)
	(7) edge node {} (8)
	(8) edge node {} (9)
	(9) edge node {} (10)
	(10) edge node {} (11)
	(11) edge node {} (12)
	(12) edge node {} (13)
	(13) edge node {} (14)
	(14) edge node {} (15)
	(15) edge node {} (16)
	(16) edge node {} (17)
	(17) edge node {} (18)
	(18) edge node {} (19)
	(19) edge node {} (20)
	(20) edge node {} (21)
	(21) edge node {} (22)
	(22) edge node {} (23)
	(23) edge node {} (24)
	(24) edge node {} (25)
	(25) edge node {} (26)
	(26) edge node {} (27)
	(27) edge node {} (28)
	(28) edge node {} (29)
	(29) edge node {} (30)
	(30) edge node {} (31)
	(31) edge node {} (32)
	(32) edge node {} (33)
	(33) edge node {} (34)
	(34) edge node {} (35)	
	(35) edge node {} (36)	
	(36) edge node {} (37)	
	(37) edge node {} (38)	
	(38) edge node {} (39)	
	(39) edge node {} (40)	
	(40) edge node {} (1)
	(1) edge node {} (21)
	(2) edge node {} (22)
	(3) edge node {} (23)
	(4) edge node {} (24)
	(5) edge node {} (25)
	(6) edge node {} (26)
	(7) edge node {} (27)
	(8) edge node {} (28)
	(9) edge node {} (29)
	(10) edge node {} (30)
	(11) edge node {} (31)
	(12) edge node {} (32)
	(13) edge node {} (33)
	(14) edge node {} (34)
	(15) edge node {} (35)
	(16) edge node {} (36)
	(17) edge node {} (37)
	(18) edge node {} (38)
	(19) edge node {} (39)
	(20) edge node {} (40)
	;
	
	\node [below] at (1) {59};
	\node [below] at (40) {64};
	\node [below left] at (39) {53};
	\node [below left] at (38) {99};
	\node [below left] at (37) {27};
	\node [below left] at (36) {54};
	\node [left] at (35) {70};
	\node [left] at (34) {68};
	\node [left] at (33) {56};
	\node [left] at (32) {71};
	\node [left] at (31) {38};
	\node [left] at (30) {55};
	\node [left] at (29) {81};
	\node [left] at (28) {41};
	\node [above left] at (27) {89};
	\node [above left] at (26) {83};
	\node [above left] at (25) {47};
	\node [above left] at (24) {92};
	\node [above ] at (23) {114};
	\node [above ] at (22) {44};
	\node [above] at (21) {1};
	\node [above] at (20) {2};
	\node [above] at (19) {25};
	\node [above right] at (18) {7};
	\node [above right] at (17) {20};
	\node [above right] at (16) {34};
	\node [right] at (15) {100};
	\node [right] at (14) {11};
	\node [right] at (13) {3};
	\node [right] at (12) {6};
	\node [right] at (11) {75};
	\node [right] at (10) {21};
	\node [right] at (9) {60};
	\node [right] at (8) {102};
	\node [right] at (7) {58};
	\node [below right] at (6) {33};
	\node [below right] at (5) {9};
	\node [below right] at (4) {18};
	\node [below ] at (3) {104};
	\node [below ] at (2) {63};
	
	\end{tikzpicture}
\end{center}

\normalsize
The reader can recognize, once again, that the $(121,M_{40},1)$ difference graph constructed above
is fixed by Frob$[\Z_5]_3$. The external 40-cycle is obtainable by joining the orbits of the
path $[1,2,25,7,20,34,100,11,3]$ under this group.

\subsection{Near resolvable $(v,2,1)_q$ designs}

We recall that a {\it resolvable} 2$-(v,k,\lambda)$ design is a triple $({\cal P},{\cal B},{\cal R})$ where
$({\cal P},{\cal B})$ is a 2$-(v,k,\lambda)$ design and ${\cal R}$ is a partition of ${\cal B}$ into classes
({\it parallel classes}) each of which is, in its turn, a partition of ${\cal P}$.

We also recall that a {\it near resolvable} 2$-(v,k,k-1)$ design is a triple $({\cal P},{\cal B},{\cal R})$ where
$({\cal P},{\cal B})$ is a 2$-(v,k,k-1)$ design and ${\cal R}$ is a partition of ${\cal B}$ into classes
({\it near parallel classes}) each of which gives a partition of all points except one.

A 2$-(v,2,1)$ design is nothing but the complete graph $\K_v$ and its $q$-analog, namely a 2$-(v,2,1)_q$ design, is the 
point-line design of PG$(\F_q^v)$. These designs are clearly trivial. A resolvable 2$-(v,2,1)$ design (more commonly known
as a {\it one-factorization of $\K_v$}) is a partition of the edges of $\K_v$ into {\it perfect matchings}
\footnote{A perfect matching of a graph $\Gamma$ is a subset of $E(\Gamma)$ partitioning $V(\Gamma)$.}.
The $q$-analog of a perfect matching of $\K_v$ is clearly a parallel class of the point-line design associated with PG$(\F_q^v)$, 
i.e., a line spread of PG$(\F_q^v)$. It is then natural to give the following definition.
\begin{defn}
A resolvable 2$-(v,2,1)_q$ design is a partition of the lines of PG$(\F_q^v)$ into classes each
of which is a line spread of PG$(\F_q^v)$.
\end{defn}


Adopting this terminology, a famous result by R.D. Baker \cite{Baker} can be restated as follows.
\begin{thm} {\rm\cite{Baker}}
There exists a resolvable $2$$-(v,2,1)_2$ design if and only $v$ is even.
\end{thm}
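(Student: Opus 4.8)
The statement is an ``if and only if'' whose two halves demand very different effort: the ``only if'' is a one-line counting remark, while the ``if'' is the substantial part and is exactly the assertion that $\PG(\F_2^v)$ admits a \emph{line parallelism} (a partition of all its lines into line spreads) for every even $v$.

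\emph{Necessity.} Each parallel class of a resolvable $2$-$(v,2,1)_2$ design is, by definition, a partition of the point set of $\PG(\F_2^v)$ into lines, i.e.\ a $1$-spread. As recalled in Section~3, a $d$-spread of $\PG(\F_q^v)$ exists if and only if $d+1$ divides $v$; with $d=1$ this forces $2\mid v$. (Concretely: a line of $\PG(\F_2^v)$ has $[2]_2=3$ points, so $3$ must divide $[v]_2=2^v-1$, and $2^v-1\equiv(-1)^v-1\pmod 3$ vanishes exactly when $v$ is even.) Hence no resolvable $2$-$(v,2,1)_2$ design exists for odd $v$.

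\emph{Sufficiency.} Put $v=2m$; one must partition the $\tfrac13(2^{2m}-1)(2^{2m-1}-1)$ lines of $\PG(\F_2^{2m})$ into $2^{2m-1}-1$ line spreads, each of size $\tfrac13(2^{2m}-1)$. The elementary building block is the \emph{Desarguesian} line spread: identifying the points of $\PG(\F_2^{2m})$ with $\F_{2^{2m}}^\ast$ and fixing the subfield $\F_4\subseteq\F_{2^{2m}}$ (available since $2\mid 2m$), the cosets $x\F_4^\ast$ partition $\F_{2^{2m}}^\ast$, and each coset $\{x,\omega x,\omega^2x\}$ is a line because $x+\omega x=\omega^2x$. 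Two symmetries survive: multiplication by any element of $\F_{2^{2m}}^\ast$ and the Frobenius $x\mapsto x^2$ are $\F_2$-linear, hence permute lines and carry line spreads to line spreads, which is how one manufactures many spreads from few. It is convenient (essentially Baker's viewpoint) to pass to the affine geometry ${\rm AG}(2m,2)=\F_2^{2m}$: a line spread $\mathcal S$ of $\PG(\F_2^{2m})$ is just the set of $2$-dimensional linear subspaces occurring in it, and the family of \emph{all} cosets of \emph{all} members of $\mathcal S$ is a Steiner system $S(2,4,2^{2m})$ whose blocks are $2$-flats of ${\rm AG}(2m,2)$ -- for a pair $\{a,b\}$ the difference $a-b$ lies in a unique $W\in\mathcal S$, so $a+W$ is the unique block through $\{a,b\}$. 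Under this dictionary a line parallelism of $\PG(\F_2^{2m})$ corresponds to a partition of the $2$-flats of ${\rm AG}(2m,2)$ into such Steiner systems, which is the form in which Baker builds it.

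\emph{The construction, and the obstacle.} I would attack the construction in two stages. Stage one is a recursive doubling of $v$: a line parallelism of $\PG(\F_2^{k})$ lifts, by a standard construction along a $(k-1)$-spread of $\PG(\F_2^{2k})$ (distributing the parallelism over the spread members, which are copies of $\PG(\F_2^k)$, and handling separately the lines meeting three members transversally), to a line parallelism of $\PG(\F_2^{2k})$; starting from the trivial parallelism of $\PG(\F_2^{2})$ this already settles every $v$ that is a power of $2$. Stage two, which is the heart of the matter, must cover the remaining even $v$, and here the recursion no longer suffices: one works inside ${\rm AG}(2m,2)\cong\F_{2^{2m}}$ and constructs the partition of $2$-flats directly, by a difference-family type argument in which a carefully chosen small set of ``base'' line spreads is used, so that its images under the translation group of ${\rm AG}(2m,2)$ -- supplemented where needed by multiplications by field elements or by Frobenius powers -- cover every $2$-flat (equivalently every line of $\PG(\F_2^{2m})$) exactly once. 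The main obstacle is precisely this: producing, for every $m$, enough pairwise line-disjoint line spreads whose translates and images tile the whole line set, and verifying the exact-cover count; this is the combinatorial miracle at the core of Baker's theorem. Once such base spreads are in hand, the passage from ``covers'' to ``partitions'' is forced by the spread property, exactly as in the pair-counting check above, and the resulting decomposition into line spreads is the desired resolvable $2$-$(v,2,1)_2$ design.
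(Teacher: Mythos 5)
Your ``only if'' half is correct and complete: a parallel class is a $1$-spread, a $1$-spread of $\PG(\F_2^v)$ requires $[2]_2=3$ to divide $[v]_2=2^v-1$, and this happens exactly when $v$ is even. For the record, the paper itself offers no proof of this theorem at all: it is stated as a quotation of Baker's 1976 result \cite{Baker}, so there is no internal argument to compare yours against.

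The ``if'' half, however, is not proved. What you give is a correct dictionary (line parallelisms of $\PG(\F_2^{2m})$ correspond to partitions of the $2$-flats of ${\rm AG}(2m,2)$ into Steiner systems $S(2,4,2^{2m})$, which is indeed Baker's setting) together with a plan of attack, and you say yourself that the central step --- producing, for every $m$, base line spreads whose translates and multiplicative/Frobenius images tile the line set exactly once --- is ``the combinatorial miracle at the core of Baker's theorem'' and is left unverified. That step \emph{is} the sufficiency direction, so the proof has a genuine gap there. Even the auxiliary doubling of ``stage one'' is asserted rather than proved: after distributing a parallelism of $\PG(\F_2^{k})$ over the $2^k+1$ members of a $(k-1)$-spread of $\PG(\F_2^{2k})$ you obtain only $2^{k-1}-1$ of the required $2^{2k-1}-1$ parallel classes, and you still must partition the transversal lines (those meeting three spread members) into the remaining $2^{k-1}(2^k-1)$ line spreads of the whole space; no construction for that is given, and it is not a routine matter. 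To make the argument complete you would either have to carry out Baker's explicit construction in ${\rm AG}(2m,2)$ and verify the exact-cover count, or simply cite \cite{Baker} --- which is all the paper does.
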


A near resolvable 2$-(v,2,1)$ design is a partition of the edges of $\K_v$ into {\it near perfect matchings}\footnote{A near
perfect matching of a graph $\Gamma$ is a subset of $E(\Gamma)$ partitioning $V(\Gamma)\setminus\{x\}$ for a suitable
{\it missing vertex} $x$.}. 
Obviously, a near perfect matching $N$ of $\K_v$ can be seen as a perfect matching of $\K_{v-1}$.
Thus, for what we have said above, the $q$-analog of $N$ should be a 1-spread of 
a PG$(\F_q^{v-1})$, i.e., a 1-spread of a hyperplane of PG$(\F_q^v)$. 
It is then natural to give the following definition.
\begin{defn}
A near resolvable 2$-(v,2,1)_q$ design is a partition of the lines of PG$(\F_q^v)$ into classes each
of which forms a spread of a hyperplane.
\end{defn}

Several authors \cite{FJV,FMT,Tonchev} studied the problem of partitioning a Singer $([v]_q,[v-1]_q,[v-2]_q)$ difference set 
into lines of PG$(\F_q^v)$ forming a $([v]_q,[2]_q,1)$ difference family. 
Every solution of this problem clearly gives a cyclic near resolvable 2$-(v,2,1)_q$ design.

In particular, every such solution is a Singer graceful labeling of the graph $\Gamma$ of order $[v-1]_q$ whose 
connected components are $(q+1)$-cliques, hence it can be viewed as a 2$-(v,\Gamma,1)$ design over $\F_q$.
The converse is not true; we may have a Singer graceful labeling of $\Gamma$ that is not a partition into lines.
Here is an example. Take a root $g$ of the polynomial $x^5+x^2+1$ as generator of $[\Z_5]_2$, take the 
Singer $(31,15,7)$ difference set $D=\{1,2,3,4,6,8,12,15,16,17,23,24,27,29,30\}$ considered in subsection \ref{singergracefulcycles}, 
and let $\Gamma$ be the graph whose connected components are five $3$-cliques. A $D$-graceful labeling of $\Gamma$ is
\begin{center}
\begin{tikzpicture}[-,auto,node distance=2cm,thick,main
node/.style={circle,fill=black,draw}]
\node[circle,fill,scale=0.5](1) {};
\node[circle,fill,scale=0.5, below right of=1](2) {};
\node[circle,fill,scale=0.5, below left of=1](3) {};
\node[draw=none,fill=none,scale=0.5, right of= 1](4) {};
\node[circle,fill,scale=0.5, right of=4](5) {};
\node[circle,fill,scale=0.5, below right of=5](6) {};
\node[circle,fill,scale=0.5, below left of=5](7) {};
\node[draw=none,fill=none,scale=0.5, right of= 5](8) {};
\node[circle,fill,scale=0.5, right of=8](9) {};
\node[circle,fill,scale=0.5, below right of=9](10) {};
\node[circle,fill,scale=0.5, below left of=9](11) {};
\node[draw=none,fill=none,scale=0.5, right of= 9](12) {};
\node[circle,fill,scale=0.5, right of= 12](13) {};
\node[circle,fill,scale=0.5, below right of=13](14) {};
\node[circle,fill,scale=0.5, below left of=13](15) {};
\node[draw=none,fill=none,scale=0.5, right of= 13](16) {};
\node[circle,fill,scale=0.5, right of=16](17) {};
\node[circle,fill,scale=0.5, below right of=17](18) {};
\node[circle,fill,scale=0.5, below left of=17](19) {};

\path
(1) edge node {} (2)
(2) edge node {} (3)
(3) edge node {} (1)
(5) edge node {} (6)
(6) edge node {} (7)
(7) edge node {} (5)
(9) edge node {} (10)
(10) edge node {} (11)
(11) edge node {} (9)
(13) edge node {} (14)
(14) edge node {} (15)
(15) edge node {} (13)
(17) edge node {} (18)
(18) edge node {} (19)
(19) edge node {} (17)
;

\node [above] at (1) {1};
\node [below] at (2) {29};
\node [below] at (3) {3};

\node [above] at (5) {2};
\node [below] at (6) {27};
\node [below] at (7) {6};

\node [above] at (9) {4};
\node [below] at (10) {23};
\node [below] at (11) {12};

\node [above] at (13) {8};
\node [below] at (14) {24};
\node [below] at (15) {15};

\node [above] at (17) {16};
\node [below] at (18) {30};
\node [below] at (19) {17};
\end{tikzpicture}
\end{center}

On the other hand none of the cliques of the above graph is a line of PG$(\F_2^5)$.
Indeed the sum of the three elements of the $i$-th clique is $g^i+1$.

\section{Improper graph decompositions over a finite field}

Given a graph $\Gamma$, let us denote by $I(\Gamma)$ the set of its isolated vertices and by $\Gamma\setminus I(\Gamma)$
the graph obtained from $\Gamma$ by deleting all its isolated vertices. The graph obtained with the opposite operation of adding 
a certain number $d$ of isolated vertices to a graph $\Gamma$  will be denoted by $\Gamma \ \cup \ N_d$. Indeed, by $N_d$ we mean the
{\it null graph} of order $d$, i.e., the edgeless graph with $d$ vertices.

Usually, speaking of a 2$-(v,\Gamma,\lambda)$ design, it is understood that $I(\Gamma)$ is empty.
This is because if ${\cal B}$ is the collection of blocks of a 2$-(v,\Gamma,\lambda)$ design, then it is obvious that
$\{B\setminus I(B) \ | \ B\in{\cal B}\}$ is the collection of blocks of a 2$-(v,\Gamma\setminus I(\Gamma),\lambda)$ design.
Conversely, it is clear that from a 2$-(v,\Gamma,\lambda)$ design one can immediately obtain a 2$-(v,\Gamma \ \cup \ N_d,\lambda)$ design
provided that $d\leq v-k$ where $k$ is the order of $\Gamma$.

Anyway, to allow isolated vertices in the context of graph decompositions over a finite field is meaningful.
Indeed, deleting the isolated vertices of each block of a 2$-(v,\Gamma,\lambda)$ design over $\F_q$
we do not obtain a 2$-(v,\Gamma\setminus I(\Gamma),\lambda)$ design over $\F_q$.

We will say that a 2$-(v,\Gamma,\lambda)$ design over $\F_q$ is {\it improper of degree $d$} if $\Gamma$ has exactly $d$ isolated vertices.
An improper design of degree 0 will be said {\it proper}.
That said, it is clear that the most interesting designs are the proper ones. 
Suffice it to say that every possible 2$-([v]_q,\Gamma,\lambda)$ design can be ``extended" to a suitable 2$-(v,\Gamma,\lambda)$ design over $\F_q$;
in the worst of the cases, said $k$ the order of $\Gamma$, it gives a spanning 2$-([v]_q,\Gamma \ \cup \ N_{[v]_q-k},\lambda)$ design over $\F_q$.

As an example, we give a 2$-(7,\Pi_3 \ \cup \ N_1,1)$ design over $\F_2$, so improper of degree 1.
Reasoning exactly as in Subsection \ref{Q_3^*} the reader can check that such a design can be obtained
by means of only one initial base block that is the preimage under $f$ of the graph depicted below.
\begin{center}
\begin{tikzpicture}
\node[minimum size=1.5cm, regular polygon, regular polygon sides=3,
rotate=180] (epta) {};
\foreach \x in {1,2,3}{%
\node[circle,fill,scale=0.5] at (epta.corner \x) (e\x) {};
}
\node[minimum size=3.5cm, regular polygon, regular polygon sides=3,
rotate=180] (septa) {};
\foreach \x in {1,2,3}{%
\node[circle,fill,scale=0.5] at (septa.corner \x) (s\x) {};
}
\node[circle,fill,scale=0.5](1) {};
\path
(e1) edge node {} (e2)
(e2) edge node {} (e3)
(e3) edge node {} (e1)
(s1) edge node {} (s2)
(s2) edge node {} (s3)
(s3) edge node {} (s1)
(e1) edge node {} (s1)
(e2) edge node {} (s2)
(e3) edge node {} (s3)
;
\node [below right] at (e1) {105};
\node [above] at (e2) {60};
\node [above] at (e3) {25};
\node [below] at (s1) {7};
\node [above] at (s2) {1};
\node [above] at (s3) {0};
\node [right] at (1) {124};
\end{tikzpicture}
\end{center}

\section*{Conclusion}
It would be nice to conclude with a list of open problems. The fact is that almost everything is still open. Even though the
topic of designs over finite fields has been considerably relaxed, to find general answers seems to be extremely difficult.
At the moment, for the case $\lambda=1$, we are not even able to exhibit an infinite family of non-trivial $\Gamma$-decompositions over a finite field.
So a natural target, hopefully not too ambitious, should be to prove that there are infinitely many values of $v$ for which there exists a 
2$-(v,\Gamma,1)$ design over $\F_q$ for at least one pair $(q,\Gamma)$.

\section*{Acknowledgements}
The authors are grateful to the anonymous referees for their comments which improved the readability of the paper.
This work has been performed under the auspices of the G.N.S.A.G.A. of the C.N.R. (National Research Council) of Italy. The second author is supported in part by the Croatian Science Foundation under the project 6732.

\bibliographystyle{model1-num-names}

\end{document}